\renewcommand*{\backrefalt}[4]{%
    \ifcase #1 \footnotesize{(Not cited.)}%
    \or        \footnotesize{(Cited on page~#2.)}%
    \else      \footnotesize{(Cited on pages~#2.)}%
    \fi}
\providecommand{\keywords}[1]
{
	\small	
	\textbf{\textit{Keywords.}} #1
}
\theoremstyle{plain}
\newtheorem{prop}{\protect\propositionname}
\theoremstyle{plain}
\newtheorem{thm}{\protect\theoremname}
\theoremstyle{plain}
\newtheorem{cor}{\protect\corollaryname}
\theoremstyle{plain}
\newtheorem{lem}{\protect\lemmaname}
\theoremstyle{remark}
\newtheorem{rem}{\protect\remarkname}
\providecommand{\corollaryname}{Corollary}
\providecommand{\propositionname}{Proposition}
\providecommand{\theoremname}{Theorem}
\providecommand{\lemmaname}{Lemma}
\providecommand{\remarkname}{Remark}
\begin{document}
\title{On the large-sample limits of some Bayesian model evaluation statistics}
\author[1,2]{Hien Duy Nguyen}
\author[3]{Mayetri Gupta}
\author[4]{Jacob Westerhout}
\author[5]{TrungTin Nguyen}

\affil[1]{Department of Mathematics and Physical Science, La Trobe University, Melbourne, Australia.}
\affil[2]{Institute of Mathematics for Industry, Kyushu University, Fukuoka, Japan.}
\affil[3]{School of Mathematics and Statistics, University of Glasgow, Glasgow, UK.}
\affil[4]{School of Mathematics and Physics, University of Queensland, Brisbane, Australia.}
\affil[5]{School of Mathematical Sciences, Queensland University of Technology, Brisbane, Australia.}

\maketitle
\begin{abstract}
Model selection and order selection problems frequently arise in statistical
practice. A popular approach to addressing these problems in the frequentist
setting involves information criteria based on penalised maxima of
log-likelihoods for competing models. In the Bayesian context, similar
criteria are employed, replacing the maximised log-likelihoods with
posterior expectations of the log-likelihood. Despite their popularity in applications,
the large-sample behaviour of these criteria---such as the deviance
information criterion (DIC), Bayesian predictive information criterion
(BPIC), and widely applicable Bayesian information criterion (WBIC)---has
received relatively little attention. In this work, we investigate
the almost-sure limits of these criteria and establish novel results
on posterior and generalised posterior consistency, which are of independent
interest. The utility of our theoretical findings is demonstrated
via illustrative technical and numerical examples.

\end{abstract}

\keywords{Bayesian model comparison, Bayesian information criterion, deviance information criterion, Bayesian consistency, almost sure weak convergence}


\section{Introduction\label{sec:introduction}}

\subsection{General introduction}

Bayesian statistics provides a cohesive and comprehensive set of tools
for the analysis of data arising from natural and man-made phenomena.
In recent decades, with advances in the availability of computational
resources, Bayesian analysis has become an increasingly popular paradigm
for the development of novel statistical methods in applied settings,
and Bayesian statistical inference is an increasingly active area of
fundamental research and development \citep{ghosh2006introduction, robert2007bayesian}. 

As in frequentist settings, the Bayesian framework often requires
the analyst to choose between various models that may each provide
plausible fits to the observed data. To this end, a popular technique
is to employ information criteria, such as the
Akaike information criterion (AIC; \citealp{Akaike1974}) and the Bayesian
information criterion (BIC; \citealp{Schwarz1978}), in order to evaluate
the goodness of fit of competing models. Both the AIC and BIC are
defined via the maximised log-likelihood function of the data.
Some texts that explore these information criteria in detail are
\citet{Burnham2002} and \citet{Konishi2008}. The limiting behaviours
of the AIC, BIC, and other information criteria based on the maximum
of a data-dependent objective have been investigated in general
contexts; see, for example, \citet{Sin:1996aa} and \citet{Baudry:2015aa}.

As an alternative to the maximum-likelihood approach, some authors
have suggested information criteria based on the posterior expectation
of the log-likelihood function, as well as the log-likelihood function
evaluated at Bayesian point summaries, such as the posterior mean.
Comprehensive resources for these types of
criteria are \citet{Ando2010Bayesian-Model-} and \citet{watanabe2018mathematical},
which discuss examples including the deviance information
criterion (DIC; \citealp{spiegelhalter2002bayesian}), the Bayesian
predictive information criterion (BPIC; \citealp{ando2007bayesian}),
and the widely applicable Bayesian information criterion (WBIC; \citealp{watanabe2013widely}).

In this work, we seek to provide general regularity conditions under
which the almost-sure limits of the DIC, BPIC, and WBIC can be derived. Our technique relies on the concept
of almost-sure weak convergence, otherwise known as almost-sure conditional
weak convergence, as espoused in the works of \citet{sweeting1989conditional},
\citet{berti2006almost}, and \citet{grubel2016functional}. This
is the mode of convergence under which Bayesian consistency is often
established, and under which limit theorems for the bootstrap are proved; see, for
example, \citet[Ch.~4]{Ghosh:2003aa}, \citet[Ch.~6]{ghosal2017fundamentals},
and \citet[Sec.~1.13 and~3.7]{van-der-Vaart:2023aa}. The almost-sure
convergence of posterior measures is then combined with the convergence
of averaged functions of data and parameters using generalisations
of the Lebesgue convergence theorems, which allow for varying sequences of measures and integrands, as described in the works of
\citet{serfozo1982convergence}, \citet{feinberg2020fatou}, and \citet{feinberg2020fatoub}.

We note that convergence in mean value for the listed information
criteria and related objects can be obtained, for example, via the
many results of \citet{watanabe2018mathematical}. Our almost-sure
convergence results can thus be seen as complementary to the approach
of \citet{watanabe2018mathematical}.
We further note that our results complement those of \cite{li2025dic_theory}, who recently provided an assessment of the DIC as a predictive risk criterion at the expectation level, via a Laplace-approximation approach under strong regularity, which contrasts with our minimal and general approach to the DIC and other criteria.
We shall provide technical details in the sequel.

\subsection{Technical introduction}

Let $\left(\Omega,\mathfrak{F},\text{P}\right)$ be a probability
space with typical element $\omega$ and expectation operator $\text{E}$,
and let $\left(\mathbb{T},\mathfrak{B}_{\mathbb{T}}\right)$ be a
measurable space with typical element $\theta$, where $\mathbb{T}$ is the intersection of an open subset and a closed subset of $\mathbb{R}^{p}$ with its Borel $\sigma$-algebra $\mathfrak{B}_{\mathbb{T}}$.
For each $n\in\mathbb{N}$, let $f_{n}:\Omega\times\mathbb{T}\rightarrow\mathbb{R}$
be such that, for each $\theta\in\mathbb{T}$, the mapping $\omega\mapsto f_{n}\!\left(\omega,\theta\right)$ is $\mathfrak{F}$-measurable,
and let $\text{Q}_{n}:\Omega\times\mathfrak{B}_{\mathbb{T}}\rightarrow\mathbb{R}_{\ge0}$
be a random measure in the sense that, for each $\omega$, $\text{Q}_{n}\!\left(\omega,\cdot\right)\in\mathcal{P}\!\left(\mathbb{T}\right)$,
where $\mathcal{P}\!\left(\mathbb{T}\right)$ is the space of measures
on $\left(\mathbb{T},\mathfrak{B}_{\mathbb{T}}\right)$.

Following the exposition of \citet{grubel2016functional}, we say
that the sequence $\left(\text{Q}_{n}\right)_{n\in\mathbb{N}}$
converges weakly to $\text{Q}\in\mathcal{P}\left(\mathbb{T}\right)$,
almost surely, with respect to the measure $\text{P}$ (which we denote
as $\text{Q}_{n}\stackrel[n\rightarrow\infty]{\text{P}\text{-a.s.w.}}{\longrightarrow}\text{Q}$),
if for each bounded and continuous $f:\mathbb{T}\rightarrow\mathbb{R}$,
\begin{equation} \label{eq:-weak-con-def}
\int_{\mathbb{T}}f\left(\theta\right)\text{Q}_{n}\left(\omega,\text{d}\theta\right)\underset{n\rightarrow\infty}{\longrightarrow}\int_{\mathbb{T}}f\left(\theta\right)\text{Q}\left(\text{d}\theta\right)\text{,}
\end{equation}
for every $\omega$ on a $\text{P}$-almost sure ($\text{P}$-a.s.) set (where the set may depend on $f$). Now suppose that the sequence of random functions $\left(f_{n}\left(\omega,\cdot\right)\right)_{n\in\mathbb{N}}$
converges to some deterministic $\left(\mathbb{T},\mathfrak{B}_{\mathbb{T}}\right)$-measurable
function $f$, in some appropriate sense, on a $\text{P}$-a.s. set. Then, we
seek to investigate integrals of the form
\begin{equation}
\int_{\mathbb{T}}f_{n}\left(\omega,\theta\right)\text{Q}_{n}\left(\omega,\text{d}\theta\right)\text{,}\label{eq:-general-integral-expression}
\end{equation}
and deduce sufficient conditions under which such integrals will converge
to the limit
\[
\int_{\mathbb{T}}f\left(\theta\right)\text{Q}\left(\text{d}\theta\right)\text{,}
\]
for each $\omega$ on a $\text{P}$-a.s. set.

We note that such integrals of the form \eqref{eq:-general-integral-expression}
occur frequently in Bayesian statistics. To present some common cases,
let us introduce some further notation. Let $\left(X_{i}\right)_{i\in\left[n\right]}$
be a sequence of random variables $X_{i}:\Omega\rightarrow\mathbb{X}$
for each $i\in\left[n\right]$, where
$\mathbb{X}\subset\mathbb{R}^{q}$ is a subset with $\sigma$-algebra
$\mathfrak{B}_{\mathbb{X}}$. Next, we suppose that $\Pi\in\mathcal{P}\left(\mathbb{T}\right)$
is a prior probability measure on $\left(\mathbb{T},\mathfrak{B}_{\mathbb{T}}\right)$,
with density function $\pi:\mathbb{T}\rightarrow\mathbb{R}_{\ge0}$
with respect to some dominating measure $\mathfrak{m}$ on $\left(\mathbb{T},\mathfrak{B}_{\mathbb{T}}\right)$. 

Next, we suppose that $\text{p}\left(x_{1},\dots,x_{n}\mid\theta\right)$
is a likelihood function in the sense that, for each fixed $\theta\in\mathbb{T}$,
$\text{p}\left(\cdot\mid\theta\right):\mathbb{X}^{n}\rightarrow\mathbb{R}_{\ge0}$
is a probability density function with respect to the $n$-fold product
measure of some dominating measure $\mathfrak{n}$ on $\left(\mathbb{X},\mathfrak{B}_{\mathbb{X}}\right)$.
Then, we may identify the sequence of posterior measures $\left(\Pi_{n}\left(\omega,\cdot\right)\right)_{n\in\mathbb{N}}$
as a sequence of random measures, where
\begin{equation}
\Pi_{n}\left(\omega,\cdot\right)=\Pi\left(\cdot\mid X_{1}(\omega),\dots,X_{n}(\omega)\right)\text{,}\label{eq:-posterior-measure}
\end{equation}
is defined by its density function
\[
\theta\mapsto\pi\left(\theta\mid X_{1},\dots,X_{n}\right)=\frac{\text{p}\left(X_{1},\dots,X_{n}\mid\theta\right)\pi\left(\theta\right)}{\int_{\mathbb{T}}\text{p}\left(X_{1},\dots,X_{n}\mid\tau\right)\pi\left(\tau\right)\mathfrak{m}\left(\text{d}\tau\right)}\text{.}
\]

Suppose further that $\left(\theta_{n}\right)_{n\in\mathbb{N}}$ is
a sequence of estimators of $\theta$, in the sense that $\theta_{n}:\mathbb{X}^{n}\rightarrow\mathbb{T}$
for each $n\in\mathbb{N}$. A first example of an object of the form
\eqref{eq:-general-integral-expression} is to consider parametric
loss functions of the form $\ell:\mathbb{T}\times\mathbb{T}\rightarrow\mathbb{R}_{\ge0}$.
Then, we may identify $\omega\mapsto\ell\left(\theta_{n}(\omega),\theta\right)$ as a random
function and write the posterior risk with respect to $\ell$ as
defined in \citet[Ch.~4]{Lehmann1998} by the expression 
\[
\int_{\mathbb{T}}\ell\left(\theta_{n}\left(\omega\right),\theta\right)\Pi_{n}\left(\omega,\text{d}\theta\right)\text{.}
\]
More pertinent to our investigation, we can consider more elaborate
random functions $f_{n}$ that are best described as utility or score
functions $u_{n}:\mathbb{X}^{n}\times\mathbb{T}\rightarrow\mathbb{R}$,
as per the exposition of \citet{Berger:1985aa} and \citet{bernardo2009bayesian}.
Examples of such utility functions include the log-likelihood function
\[
u_{n}\left(x_{1},\dots,x_{n};\theta\right)=\log\text{p}\left(x_{1},\dots,x_{n}\mid\theta\right)\text{.}
\]

Here, the integral of the log-likelihood with respect to the posterior measure is of particular interest as it appears
in both the DIC and BPIC, while the integral with respect to a modified posterior measure, described below, forms the definition of the WBIC.

We can also identify other Bayesian objects
with sequences of measures $\left(\text{Q}_{n}\left(\omega,\cdot\right)\right)_{n\in\mathbb{N}}$.
For example, approximate Bayesian computation pseudo-posteriors can
be identified as random measures (see, e.g., \citealp{bernton2019approximate}, \citealp{nguyen2020approximate} and \citealp{forbes_summary_2022}), as with asymptotic Bayesian
approximations of the posterior distribution obtained by the Bernstein--von
Mises theorem (see, e.g., \citealp[Ch. 10]{vdVaart1998} and \citealp[Ch. 4]{ghosh2006introduction}).
Related to such objects are data-dependent measures on $\left(\mathbb{T},\mathfrak{B}_{\mathbb{T}}\right)$
often studied as fiducial distributions or confidence distributions
(see, e.g., \citealp{hannig2016generalized} and \citealp{schweder2016confidence}).
A general method for generating sequences $\left(\text{Q}_{n}\right)_{n\in\mathbb{N}}$
that exhibit $\text{P}$-a.s. convergence, including variational Bayesian posteriors (see, e.g., \citealp{zhang_convergence_2020} and \citealp{nguyen_bayesian_2024}),
generalised posteriors (see, e.g., \citealp{Zhang:2006ab} and
\citealp{Bissiri:2016aa}), and typical posteriors of the form \eqref{eq:-posterior-measure},
has recently been studied by \citet{Knoblauch:2022aa} (see also
\citealp{knoblauch2019frequentist}). Of particular interest in our
study is the generalised (power) posterior $\Pi_{n}^{\beta_{n}}$,
for a sequence $\left(\beta_{n}\right)_{n\in\mathbb{N}}\subset\mathbb{R}_{>0}$,
whose density is defined by
\[
\pi^{\beta_{n}}\left(\theta\mid X_{1},\dots,X_{n}\right)=\frac{\left\{\text{p}\left(X_{1},\dots,X_{n}\mid\theta\right)\right\}^{\beta_{n}}\pi\left(\theta\right)}{\int_{\mathbb{T}}\left\{\text{p}\left(X_{1},\dots,X_{n}\mid\tau\right)\right\}^{\beta_{n}}\pi\left(\tau\right)\mathfrak{m}\left(\text{d}\tau\right)}\text{,}
\]
which appears in the definition of the WBIC (in particular, $\beta_{n}=1/\log n$).

The modes of weak convergence, almost surely and in probability, have
previously been formally studied in the works of \citet{sweeting1989conditional},
\citet{berti2006almost}, \citet{bain2009fundamentals}, and \citet{grubel2016functional}. Let $\delta_{\theta_{0}}:2^\mathbb{T}\rightarrow\left\{0,1\right\}$
denote the delta measure that takes the value $1$ if $\theta_{0}\in\mathbb{B}$
and $0$ otherwise, for every $\mathbb{B}\in2^\mathbb{T}$.
Then, the usual notion of posterior consistency, with respect to a
parameter $\theta_{0}\in\mathbb{T}$, in Bayesian analysis, can be
characterised as the weak convergence
of the posterior measure $\Pi_n^1=\Pi_n$ to $\delta_{\theta_{0}}$,
$\text{P}$-a.s. (cf. \citealp[Prop. 6.2]{ghosal2017fundamentals}). For
another example, if the sequence $\left(\theta_{n}\right)_{n\in\mathbb{N}}$
converges $\text{P}$-a.s. to $\theta_{0}$, then since $\mathbb{T}$ is separable, we can determine weak convergence by checking condition \eqref{eq:-weak-con-def} using only a countable number of bounded continuous functions (cf. \citealp[Thm. 2.18]{bain2009fundamentals}), and determine that
$\left(\delta_{\theta_{n}}\right)_{n\in\mathbb{N}}$
converges to $\delta_{\theta_{0}}$, $\text{P}$-a.s.w., by the continuous mapping theorem. Since we can write
\begin{equation}
f_{n}\left(\omega,\theta_{n}\right)=\int_{\mathbb{T}}f_{n}\left(\omega,\theta\right)\delta_{\theta_{n}}\left(\text{d}\theta\right)\text{, and }f\left(\theta_{0}\right)=\int_{\mathbb{T}}f\left(\theta\right)\delta_{\theta_{0}}\left(\text{d}\theta\right)\text{,}\label{eq:-delta-measures}
\end{equation}
the convergence of $\left(f_{n}\left(\cdot,\theta_{n}\right)\right)_{n\in\mathbb{N}}$
to $f\left(\theta_{0}\right)$, $\text{P}$-a.s., falls within the context of
this work.

The remainder of the manuscript is organised as follows. Section \ref{section_main_result} presents the main results; Section \ref{section_examples} provides illustrative applications with numerical simulations; and Section \ref{section_discussion} offers a concluding discussion. Appendix~\ref{section_proofs} contains proofs and technical details, and additional technical calculations for the examples are collected in Appendix \ref{sec: Appendix-B}.

\section{Main results}\label{section_main_result}

In the sequel, we will retain the notation and technical setup of
the introduction. Furthermore, to simplify discussions, we will assume
that $\left(X_{i}\right)_{i\in\mathbb{N}}$ is an independent and identically distributed (IID) sequence of
random variables, each identical to $X:\Omega\rightarrow\mathbb{X}$.

We are largely concerned with three information criteria, which have
found particular popularity in the Bayesian literature; namely, the
DIC of \citet{spiegelhalter2002bayesian}, the BPIC of \citet{Ando2010Bayesian-Model-},
and the WBIC of \citet{watanabe2013widely}, taking the respective
forms:
\begin{equation}
\text{DIC}_{n}=-\frac{4}{n}\int_{\mathbb{T}}\sum_{i=1}^{n}\log\text{p}\left(X_{i}\mid\theta\right)\Pi_{n}\left(\text{d}\theta\right)+\frac{2}{n}\sum_{i=1}^{n}\log\text{p}\left(X_{i}\mid\bar{\theta}_{n}\right)\text{,}\label{eq:-dic}
\end{equation}
\begin{align}
\text{BPIC}_{n}=-\frac{2}{n}\int_{\mathbb{T}}\sum_{i=1}^{n}\log\text{p}\left(X_{i}\mid\theta\right)\Pi_{n}\left(\text{d}\theta\right)+2\frac{p}{n}\text{,} \label{eq:-bpic}
\end{align}
and
\[
\text{WBIC}_{n}=-\frac{2}{n}\int_{\mathbb{T}}\sum_{i=1}^{n}\log\text{p}\left(X_{i}\mid\theta\right)\Pi_{n}^{1/\log n}\left(\text{d}\theta\right)\text{,}
\]
where
\[
\bar{\theta}_{n}=\left(\int_{\mathbb{T}}\theta_{1}\Pi_{n}\left(\text{d}\theta\right),\dots,\int_{\mathbb{T}}\theta_{j}\Pi_{n}\left(\text{d}\theta\right),\dots,\int_{\mathbb{T}}\theta_{p}\Pi_{n}\left(\text{d}\theta\right)\right)
\]
is the posterior mean, with $\theta_j:\mathbb{R}^p\to\mathbb{R}$ denoting the $j$th coordinate projection of $\theta$. Here, $x\mapsto\text{p}\left(x\mid\theta\right)$
characterises a family of probability density functions (PDFs), indexed
by $\theta\in\mathbb{T}$, with
\[
\text{p}\left(x_{1},\dots,x_{n}\mid\theta\right)=\prod_{i=1}^{n}\text{p}\left(x_{i}\mid\theta\right)
\]
in the definitions of $\Pi_{n}$ and $\Pi_{n}^{\beta_{n}}$. 

Observe that the first expression in each of the criteria is of the form
\eqref{eq:-general-integral-expression} with
\begin{equation}
f_{n}\left(\omega,\theta\right)=-\frac{2}{n}\sum_{i=1}^{n}\log\text{p}\left(X_{i}\mid\theta\right)\text{,}\label{eq:-generic-average-log-like}
\end{equation}
in each case, and where $\text{Q}_{n}$
equals $\Pi_{n}^{\beta_{n}}$ for the WBIC, for some choice of $\left(\beta_{n}\right)_{n\in\mathbb{N}}$.
In particular, the BPIC and DIC take $\beta_{n}=1$, whereas $\beta_{n}=1/\log n$
is used in the original definition of the WBIC. The second term in
the DIC also has the form \eqref{eq:-general-integral-expression}, where
$f_{n}$ has the form \eqref{eq:-generic-average-log-like} but $\text{Q}_{n}$
equals $\delta_{\bar{\theta}_{n}}$.

Note that we have presented the information criteria as functions
of sample averages for convenience. We have also scaled the WBIC by a factor of two to standardise the scales of the three quantities.
That is, in our notation, the original characterisations of the DIC, BPIC,
and WBIC present the objects in the forms: $n\text{DIC}_{n}$, $n\text{BPIC}_{n}$,
and $n\text{WBIC}_{n}/2$, respectively.

\subsection{Consistency of \texorpdfstring{$\Pi_{n}^{\beta_{n}}$}{Pinbetan}}

Let $\mathbb{B}_{\epsilon}\left(\theta\right)=\left\{ \tau\in\mathbb{T}:\left\Vert \theta-\tau\right\Vert <\epsilon\right\}$
and $\bar{\mathbb{B}}_{\epsilon}\left(\theta\right)=\left\{ \tau\in\mathbb{T}:\left\Vert \theta-\tau\right\Vert \le\epsilon\right\}$
denote the open and closed balls around $\theta\in\mathbb{T}$ of
radius $\epsilon>0$, respectively. We shall also use $\left(\cdot\right)^{\text{c}}$
to denote the set complement with respect to the universe $\mathbb{T}$. Make the following additional assumptions:
\begin{description}
\item [{A1}] The PDF $\text{p}$ is Carath\'eodory
in the sense that $\text{p}\left(\cdot\mid\theta\right)$ is measurable
for each $\theta\in\mathbb{T}$, $\text{p}\left(x\mid\cdot\right)$
is continuous for each $x\in\mathbb{X}$, and $\text{p}\left(x\mid\theta\right)>0$
for each $\theta\in\mathbb{T}$ and $x\in\mathbb{X}$.
\item [{A2}] There exists a $\text{P}$-a.s. set on which, for every compact
$\mathbb{K}\subset\mathbb{T}$,
\begin{equation}
    \sup_{\theta\in\mathbb{K}}\left|\frac{1}{n}\sum_{i=1}^{n}\log\text{p}\left(X_{i}\mid\theta\right)-\text{E}\!\left[\log\text{p}\left(X\mid\theta\right)\right]\right|\underset{n\rightarrow\infty}{\longrightarrow}0\text{.}\label{eq:-uniform-convergence-loglike}
\end{equation}

\item [{A3}] There exists a unique maximiser $\theta_{0}\in\mathbb{T}$
of $\text{E}\!\left[\log\text{p}\left(X\mid\theta\right)\right]$, in the sense that, for each $\theta\in\mathbb{T}\backslash\left\{ \theta_{0}\right\}$, 
\[
\text{E}\!\left[\log\text{p}\left(X\mid\theta\right)\right]<\text{E}\!\left[\log\text{p}\left(X\mid\theta_{0}\right)\right]\text{.}
\]
\item [{A4}] There exists a compact set $\mathbb{S}\subsetneq\mathbb{T}$ containing $\theta_0$,
such that
there is a $\text{P}$-a.s. set on which
\[
\limsup_{n\rightarrow\infty}\sup_{\theta\in\mathbb{S}^{\text{c}}}\frac{1}{n}\sum_{i=1}^{n}\log\text{p}\left(X_{i}\mid\theta\right)<\text{E}\!\left[\log\text{p}\left(X\mid\theta_{0}\right)\right]\text{.}
\]
\end{description}
\begin{prop}
\label{prop:-consistency-of-beta-posterior}Assume that $\left(X_{i}\right)_{i\in\mathbb{N}}$
are IID, $\Pi\left(\mathbb{B}_{\rho}\left(\theta_{0}\right)\right)>0$
for every $\rho>0$, and A1--A4 hold. If $n\beta_{n}\rightarrow\infty$,
then the sequence of posterior measures $\left(\Pi_{n}^{\beta_{n}}\right)_{n\in\mathbb{N}}$
is consistent with respect to $\theta_{0}\in\mathbb{T}$ in the sense
that for every $\epsilon>0$,
\begin{equation}
\Pi_{n}^{\beta_{n}}\left(\bar{\mathbb{B}}_{\epsilon}\left(\theta_{0}\right)\right)\xrightarrow[n\rightarrow\infty]{\text{P}\text{-a.s.}}1\text{.}\label{eq:-consistency-of-beta-posterior}
\end{equation}
\end{prop}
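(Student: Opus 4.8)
The plan is to reduce the claim to showing that $\Pi_{n}^{\beta_{n}}\!\left(\bar{\mathbb{B}}_{\epsilon}\left(\theta_{0}\right)^{\text{c}}\right)\to0$, $\text{P}$-a.s., and then to control this quantity by the classical comparison argument for posterior consistency in finite dimensions, whereby one shows that the likelihood mass placed on a neighbourhood of $\theta_{0}$ dominates that placed on its complement. Writing $m\left(\theta\right)=\text{E}\!\left[\log\text{p}\left(X\mid\theta\right)\right]$ and $\ell_{n}\left(\theta\right)=n^{-1}\sum_{i=1}^{n}\log\text{p}\left(X_{i}\mid\theta\right)$, so that the unnormalised generalised posterior density is $\exp\!\left(n\beta_{n}\ell_{n}\left(\theta\right)\right)\pi\left(\theta\right)$, we have
\[
\Pi_{n}^{\beta_{n}}\!\left(\bar{\mathbb{B}}_{\epsilon}\left(\theta_{0}\right)^{\text{c}}\right)=\frac{\int_{\bar{\mathbb{B}}_{\epsilon}\left(\theta_{0}\right)^{\text{c}}}\exp\!\left(n\beta_{n}\ell_{n}\left(\theta\right)\right)\pi\left(\theta\right)\mathfrak{m}\left(\text{d}\theta\right)}{\int_{\mathbb{T}}\exp\!\left(n\beta_{n}\ell_{n}\left(\theta\right)\right)\pi\left(\theta\right)\mathfrak{m}\left(\text{d}\theta\right)}\text{,}
\]
and the objective is to show that the numerator is smaller than the denominator by a factor that is exponential in $n\beta_{n}$.

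A preliminary observation used throughout is that A2 forces $m$ to be continuous on every compact $\mathbb{K}\subset\mathbb{T}$, since there it is a uniform limit of the functions $\ell_{n}$, which are continuous by A1. To lower-bound the denominator, I would discard all but a small ball $\mathbb{B}_{\rho}\left(\theta_{0}\right)$, choosing $\rho$ small enough that $\bar{\mathbb{B}}_{\rho}\left(\theta_{0}\right)$ is compact (which is possible since $\theta_{0}$ lies in the open set defining $\mathbb{T}$, so that sufficiently small closed balls about $\theta_{0}$ are compact). On this ball, A2 gives $\ell_{n}\left(\theta\right)\ge m\left(\theta\right)-\eta_{n}$ with $\eta_{n}=\sup_{\bar{\mathbb{B}}_{\rho}\left(\theta_{0}\right)}\left|\ell_{n}-m\right|\to0$, while continuity of $m$ yields $\inf_{\bar{\mathbb{B}}_{\rho}\left(\theta_{0}\right)}m\ge m\left(\theta_{0}\right)-\delta\left(\rho\right)$ with $\delta\left(\rho\right)\to0$ as $\rho\to0$. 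Since $\Pi\!\left(\mathbb{B}_{\rho}\left(\theta_{0}\right)\right)>0$ by hypothesis, the denominator is at least $\exp\!\left(n\beta_{n}\left(m\left(\theta_{0}\right)-\delta\left(\rho\right)-\eta_{n}\right)\right)\Pi\!\left(\mathbb{B}_{\rho}\left(\theta_{0}\right)\right)$.

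For the numerator I would split $\bar{\mathbb{B}}_{\epsilon}\left(\theta_{0}\right)^{\text{c}}$ into its intersection with the compact set $\mathbb{S}$ of A4 and with $\mathbb{S}^{\text{c}}$. The intersection with $\mathbb{S}$ is contained in the compact set $K_{\epsilon}=\left\{\theta\in\mathbb{S}:\|\theta-\theta_{0}\|\ge\epsilon\right\}$, which excludes $\theta_{0}$; there, continuity of $m$ together with the strict separation of A3 gives $\sup_{K_{\epsilon}}m=m\left(\theta_{0}\right)-2\gamma$ for some $\gamma>0$, so that A2 yields $\sup_{K_{\epsilon}}\ell_{n}\le m\left(\theta_{0}\right)-\gamma$ for all large $n$. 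On $\mathbb{S}^{\text{c}}$, A4 directly gives $\sup_{\mathbb{S}^{\text{c}}}\ell_{n}\le m\left(\theta_{0}\right)-c$ for all large $n$ and some $c>0$. Bounding each integral by its supremum times the (at most unit) prior mass of the region, the numerator is at most $2\exp\!\left(n\beta_{n}\left(m\left(\theta_{0}\right)-\kappa\right)\right)$, where $\kappa=\min\left(\gamma,c\right)>0$.

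Combining the two bounds, the ratio is at most $2\,\Pi\!\left(\mathbb{B}_{\rho}\left(\theta_{0}\right)\right)^{-1}\exp\!\left(n\beta_{n}\left(\delta\left(\rho\right)+\eta_{n}-\kappa\right)\right)$. Fixing $\rho$ small enough that $\delta\left(\rho\right)<\kappa/2$ and then taking $n$ large enough that $\eta_{n}<\kappa/4$, the exponent is bounded above by $-n\beta_{n}\kappa/4$, which tends to $-\infty$ because $n\beta_{n}\to\infty$; hence the ratio vanishes, on the intersection of the two $\text{P}$-a.s. sets supplied by A2 and A4. I expect the main obstacle to be the bookkeeping of the exponential rates: one must ensure that the separation gap $\kappa$ coming from A3--A4 strictly dominates the loss $\delta\left(\rho\right)+\eta_{n}$ incurred in weakening the denominator, and this must be arranged by the choice of $\rho$ and $n$ before the divergence of $n\beta_{n}$ is invoked. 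A related subtlety, and the reason A4 is needed at all, is that $\mathbb{T}$ is non-compact, so the uniform control in A2 cannot by itself bound the tail; the decomposition against the compact $\mathbb{S}$, handled by A2--A3, together with the separate tail estimate A4, is precisely what closes this gap, while the derivation of the continuity of $m$ from A2 is the small lemma that legitimises both the lower and the upper bound.
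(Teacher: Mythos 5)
Your proof is correct and follows essentially the same route as the paper: the paper proves a more general result (Theorem~\ref{thm:-generalised-posterior-consistency}, for generalised posteriors $\exp(\gamma_n u_n)\,\Pi$ with $\gamma_n\to\infty$) and then specialises with $u_n = n^{-1}\sum_i \log \mathrm{p}(X_i\mid\cdot)$ and $\gamma_n = n\beta_n$, but its argument is precisely yours---lower-bound the normaliser on a small compact ball $\overline{\mathbb{B}}_{\rho}(\theta_0)$ using prior positivity, continuity of the limit (deduced from A1--A2), and uniform convergence; split the complement as $(\mathbb{S}\cap\mathbb{B}_{\epsilon}^{\mathrm{c}}(\theta_0))\cup\mathbb{S}^{\mathrm{c}}$ with the compact part handled by A2--A3 and the tail by A4; and conclude via an exponential ratio bound driven by $n\beta_n\to\infty$. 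Your bookkeeping of the margins ($\delta(\rho)+\eta_n$ versus $\kappa$, with the $\omega$-dependence of the A4 gap implicitly absorbed into the fixed-$\omega$ argument) matches the paper's explicit $\delta_\omega$, $\kappa(\omega)$, $\rho(\omega)$ construction.
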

We shall establish a more general result in the next section, which implies Proposition \ref{prop:-consistency-of-beta-posterior}. The proof of this proposition is provided in Appendix~
\ref{proof_prop_consistency_beta_posterior}. Assumption A2 provides an almost-sure event that guarantees
the uniform convergence of the average log-likelihood on all compact
sets, which we then use to guarantee continuous convergence of
the average log-likelihood over the entirety of $\mathbb{T}$, since
it is the intersection of an open and closed set, it is a locally compact separable metric space, and is hence hemicompact (see \citealp[Def. 2.1]{beckenstein2011topological}). A3 is
an identifiability assumption regarding the maximiser $\theta_{0}$
of $\text{E}\!\left[\log\text{p}\left(X\mid\theta\right)\right]$, requiring
that $\text{E}\!\left[\log\text{p}\left(X\mid\theta\right)\right]$ be uniquely maximised on $\mathbb{T}$.
Taken together with A1 and A3, A4 can also be verified if the uniform convergence on compact sets $\mathbb{K}\subset\mathbb{T}$ in A2 is instead replaced by uniform convergence over the entirety of $\mathbb{T}$, $\text{P}$-a.s. By this observation, under A1--A3, A4 is immediately satisfied if $\mathbb{T}$ is compact. In the sequel, we further detail a useful approach, based on quasiconvexity of the log-likelihood functions.

Based on \citet[Prop. 6.2]{ghosal2017fundamentals}, we have the fact
that \eqref{eq:-consistency-of-beta-posterior} is equivalent to the
statement that $\left(\Pi_{n}^{\beta_{n}}\right)_{n\in\mathbb{N}}$
converges to $\delta_{\theta_{0}}$, $\text{P}$-a.s.w. We consider that both
A1 and A3 are fundamental assumptions about the properties of the PDFs
$\text{p}$ and must be inspected for each problem. However, A2 can be
implied by more verifiable assumptions using a uniform strong law
of large numbers (e.g., Theorem \ref{thm:-ulln}). Make the following
additional assumptions:
\begin{description}
\item [{A2a}] For each $\theta\in\mathbb{T}$, 
\[
\frac{1}{n}\sum_{i=1}^{n}\log\text{p}\left(X_{i}\mid\theta\right)\xrightarrow[n\rightarrow\infty]{\text{P}\text{-a.s.}}\text{E}\!\left[\log\text{p}\left(X\mid\theta\right)\right]\text{,}
\]
where $\text{E}\!\left[\left|\log\text{p}\left(X\mid\theta\right)\right|\right]<\infty$.
\item [{A2b}] For each compact set $\mathbb{K}\subset\mathbb{T}$, there
exists a dominating function $\Delta:\mathbb{X}\rightarrow\mathbb{R}$,
such that
\[
\left|\log\text{p}\left(X\mid\theta\right)\right|\le\Delta\left(X\right)\text{,}
\]
for each $\theta\in\mathbb{K}$, and $\text{E}\!\left[\Delta\left(X\right)\right]<\infty$.
\end{description}
The conclusion of Proposition \ref{prop:-consistency-of-beta-posterior}
holds with A2 replaced by A2a and A2b, above. Thus, we now have enough practical
conditions to establish that $\left(\Pi_{n}^{\beta_{n}}\right)_{n\in\mathbb{N}}$
converges to $\delta_{\theta_{0}}$, $\text{P}$-a.s.w.

\subsection{A general convergence theorem}

We now present conditions under which \eqref{eq:-general-integral-expression}
converges to its limit, generally. To this end, we adapt the deterministic
generalised Lebesgue convergence theorems of \citet{serfozo1982convergence},
\citet{feinberg2020fatoub} and \citet{feinberg2020fatou}. 

To proceed, we require a number of definitions. First, we will say
that a sequence of random functions $\left(f_{n}(\omega,\cdot)\right)_{n\in\mathbb{N}}$
continuously converges to the deterministic function $f:\mathbb{T}\rightarrow\mathbb{R}$,
$\mathrm{P}$-a.s., if for every sequence $\left(\theta_{n}\right)_{n\in\mathbb{N}}$
converging to a limit $\theta\in\mathbb{T}$, it holds that $\left(f_{n}\left(\omega,\theta_{n}\right)\right)_{n\in\mathbb{N}}$
converges to $f\left(\theta\right)$, for every $\omega$ on
a $\mathrm{P}$-a.s. set. Next, we say that $\left(f_{n}\right)_{n\in\mathbb{N}}$
is asymptotically uniformly integrable (AUI) with respect to $\left(\mathrm{Q}_{n}\right)_{n\in\mathbb{N}}$,
$\mathrm{P}$-a.s., if
\[
\lim_{\delta\rightarrow\infty}\limsup_{n\rightarrow\infty}\int_{\mathbb{T}}\left|f_{n}\left(\omega,\theta\right)\right|
\chi_{\{\tau\in\mathbb{T}:\,|f_{n}(\omega,\tau)|\ge\delta\}}(\theta)
\mathrm{Q}_n\left(\omega,\mathrm{d}\theta\right)=0\text{,}
\]
for every $\omega$ on a $\mathrm{P}$-a.s. set. If each $f_{n}=f$, then we will instead say that $f$ is AUI with respect to $\left(\mathrm{Q}_{n}\right)_{n\in\mathbb{N}}$. Here, $\chi_{\mathbb{A}}:\mathbb{T}\rightarrow\mathbb{R}$
is the usual characteristic function for the set $\mathbb{A}\subset\mathbb{T}$.
A direct modification of \citet[Cor. 2.8]{feinberg2020fatou} for
the stochastic setting yields the following general Lebesgue dominated
convergence theorem. 
\begin{thm} \label{thm:-feinberg}
Assume that $\left(\mathrm{Q}_{n}\right)_{n\in\mathbb{N}}$ converge
to $\mathrm{Q}$, $\mathrm{P}$-a.s.w., and $\left(f_{n}\right)_{n\in\mathbb{N}}$
continuously converge to $f$, $\mathrm{P}$-a.s. If $\left(f_{n}\right)_{n\in\mathbb{N}}$
is AUI with respect to $\left(\mathrm{Q}_{n}\right)_{n\in\mathbb{N}}$,
$\mathrm{P}$-a.s., then 
\[
\lim_{n\rightarrow\infty}\int_{\mathbb{T}}f_{n}\left(\omega,\theta\right)\mathrm{Q}_{n}\left(\omega,\mathrm{d}\theta\right)=\int_{\mathbb{T}}f\left(\theta\right)\mathrm{Q}\left(\mathrm{d}\theta\right)\text{,}
\]
for almost every $\omega$ with respect to $\mathrm{P}$.
\end{thm}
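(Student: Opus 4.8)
The plan is to reduce the stochastic assertion to the deterministic generalised dominated convergence theorem of \citet[Cor. 2.8]{feinberg2020fatou} by arguing pointwise in $\omega$. That deterministic result takes as its hypotheses precisely (i) weak convergence $\mathrm{Q}_n(\omega,\cdot)\to\mathrm{Q}(\cdot)$, (ii) continuous convergence of $\left(f_n(\omega,\cdot)\right)_{n\in\mathbb{N}}$ to $f$, and (iii) asymptotic uniform integrability of $\left(f_n(\omega,\cdot)\right)_{n\in\mathbb{N}}$ with respect to $\left(\mathrm{Q}_n(\omega,\cdot)\right)_{n\in\mathbb{N}}$ --- exactly the three notions defined immediately before the theorem statement --- and it delivers convergence of the integrals. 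It therefore suffices to exhibit a single $\text{P}$-a.s. set $A\subset\Omega$ on which all three deterministic hypotheses hold simultaneously; for each $\omega\in A$ the conclusion follows by direct appeal to the cited corollary, and since $A$ is $\text{P}$-a.s., the theorem is proved.

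The hard part, and really the only non-bookkeeping step, is that the hypothesis that $\left(\mathrm{Q}_n\right)_{n\in\mathbb{N}}$ converges to $\mathrm{Q}$, $\text{P}$-a.s.w., is by the definition in \eqref{eq:-weak-con-def} asserted only on a $\text{P}$-a.s. set that is permitted to depend on the test function $f$. To invoke the deterministic theorem at a fixed $\omega$, I instead need the full deterministic weak convergence, namely \eqref{eq:-weak-con-def} for \emph{every} bounded continuous $f$ at that single $\omega$. I would resolve this exactly as foreshadowed in the introduction: since $\mathbb{T}$, being the intersection of an open and a closed subset of $\mathbb{R}^p$, is a separable metric space, weak convergence of probability measures to $\mathrm{Q}$ is determined by a fixed countable family $\left\{g_k\right\}_{k\in\mathbb{N}}$ of bounded continuous functions \citep[Thm. 2.18]{bain2009fundamentals}. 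For each $k$ there is, by hypothesis, a $\text{P}$-a.s. set $A_{1,k}$ on which $\int_{\mathbb{T}}g_k\,\mathrm{Q}_n(\omega,\mathrm{d}\theta)\to\int_{\mathbb{T}}g_k\,\mathrm{Q}(\mathrm{d}\theta)$; setting $A_1=\bigcap_{k\in\mathbb{N}}A_{1,k}$ yields, as a countable intersection of full-measure sets, a single $\text{P}$-a.s. set on which $\mathrm{Q}_n(\omega,\cdot)\to\mathrm{Q}(\cdot)$ weakly in the full deterministic sense.

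The remaining two hypotheses are already framed as single $\text{P}$-a.s. events by their definitions: continuous convergence of $\left(f_n\right)_{n\in\mathbb{N}}$ to $f$ holds on a $\text{P}$-a.s. set $A_2$, and the AUI property holds on a $\text{P}$-a.s. set $A_3$. I would then take $A=A_1\cap A_2\cap A_3$, which is again $\text{P}$-a.s. as an intersection of countably many full-measure sets. For every $\omega\in A$, all three deterministic hypotheses of \citet[Cor. 2.8]{feinberg2020fatou} are in force, so $\int_{\mathbb{T}}f_n(\omega,\theta)\,\mathrm{Q}_n(\omega,\mathrm{d}\theta)\to\int_{\mathbb{T}}f(\theta)\,\mathrm{Q}(\mathrm{d}\theta)$, as required. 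The measurability needed to make sense of the almost-sure statement is inherited from the standing setup, in which each $\mathrm{Q}_n$ is a random measure and each $f_n(\cdot,\theta)$ is $\mathfrak{F}$-measurable, so that $\omega\mapsto\int_{\mathbb{T}}f_n(\omega,\theta)\,\mathrm{Q}_n(\omega,\mathrm{d}\theta)$ is a measurable function of $\omega$; beyond this, no further work is needed once the separability reduction of the previous paragraph has upgraded the $f$-dependent null sets to a single one.
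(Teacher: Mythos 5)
Your proof is correct and is essentially the paper's own argument: the paper establishes Theorem~\ref{thm:-feinberg} only by asserting it is a ``direct modification'' of \citet[Cor.~2.8]{feinberg2020fatou} to the stochastic setting, and your pointwise-in-$\omega$ reduction---upgrading the $f$-dependent null sets of \eqref{eq:-weak-con-def} to a single one via separability of $\mathbb{T}$ and a countable determining class of bounded continuous functions, then intersecting with the almost-sure events for continuous convergence and AUI before applying the deterministic corollary at each fixed $\omega$---is precisely that modification, spelled out. In particular, your countable-test-function step is the same device the paper itself invokes (citing \citealp[Thm.~2.18]{bain2009fundamentals}) in its introduction.
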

To verify that $\left(f_{n}\right)_{n\in\mathbb{N}}$ continuously
converges to $f$, $\mathrm{P}$-a.s., we can use the equivalent notion that $f$
is continuous on $\mathbb{T}$, and for almost every $\omega$ with
respect to $\mathrm{P}$, for every compact $\mathbb{K}\subset\mathbb{T}$,
$\left(f_{n}\left(\omega,\cdot\right)\right)_{n\in\mathbb{N}}$ converges
uniformly to $f$ on $\mathbb{K}$ (cf. \citealt[Thm. 3.1.5]{Remmert:1991aa}).
At first this might seem daunting since we would have to construct
an almost sure event of $\Omega$ on which uniform convergence holds
for an uncountable number of compact subsets $\mathbb{K}\subset\mathbb{T}$.
However, we shall use the fact that, since $\mathbb{T}\subset\mathbb{R}^{p}$ is the intersection of an open and closed set,
it is hemicompact, which then provides the necessary regularity
to construct such an almost sure event. We shall demonstrate applications
of these facts in the sequel. 

The AUI property is more difficult to verify but can be implied by
more practical assumptions. For instance, \citet[Cor. 2.10]{feinberg2020fatou}
suggests that if there exists a sequence of stochastic functions $\left(g_{n}\right)_{n\in\mathbb{N}}$,
converging continuously to $g:\mathbb{T}\rightarrow\mathbb{R}$, $\mathrm{P}$-a.s.,
with $g_{n}:\Omega\times\mathbb{T}\rightarrow\mathbb{R}$, for each
$n\in\mathbb{N}$, such that for almost every $\omega$ with respect
to $\mathrm{P}$, $\left|f_{n}\left(\omega,\theta\right)\right|\le g_{n}\left(\omega,\theta\right)$,
for every $\theta\in\mathbb{T}$, and 
\begin{equation}
\limsup_{n\rightarrow\infty}\int_{\mathbb{T}}g_{n}\left(\omega,\theta\right)\mathrm{Q}_{n}\left(\omega,\mathrm{d}\theta\right)\le\int_{\mathbb{T}}g\left(\theta\right)\mathrm{Q}\left(\mathrm{d}\theta\right)<\infty\text{,}\label{eq:-aui-sufficient-cond}
\end{equation}
then $\left(f_{n}\right)_{n\in\mathbb{N}}$ is AUI with respect to
$\left(\mathrm{Q}_{n}\right)_{n\in\mathbb{N}}$, $\mathrm{P}$-a.s., if $\left(f_{n}\right)_{n\in\mathbb{N}}$
continuously converges to $f$, $\mathrm{P}$-a.s. We note that a version of
this criterion, when $f_n=f$ for each $n$, is given in \citet[Thm. 1.11.3]{van-der-Vaart:2023aa}. Trivially, the condition holds if there exists a constant $b<\infty$, such that $\left|f_{n}\left(\omega,\theta\right)\right|\le b$, for each $n$, $\omega$, and $\theta$, i.e. $\left(f_n\right)_{n\in\mathbb{N}}$ is uniformly bounded by $b$.

Motivated by \citet[Example 1.11.5]{van-der-Vaart:2023aa}, we have the fact that for $1<r<q<\infty$,
the sequence $\left(f_{n}^{r}\right)_{n\in\mathbb{N}}$ is AUI with
respect to $\left(\mathrm{Q}_{n}\right)_{n\in\mathbb{N}}$, $\mathrm{P}$-a.s., if
\[
\limsup_{n\to\infty}\int_{\mathbb{T}}\left|f_{n}\left(\omega,\theta\right)\right|^{q}\mathrm{Q}_{n}\left(\omega,\mathrm{d}\theta\right)<\infty\text{,}
\]
holds, for every $\omega$ on a $\mathrm{P}$-a.s. set. This is because, for each $\delta>0$,
\[
\limsup_{n\to\infty}\int_{\mathbb{T}}\left|f_{n}\left(\omega,\theta\right)\right|^{r}
\chi_{\{\tau\in\mathbb{T}: |f_{n}(\omega,\tau)|\ge\delta\}}(\theta)
\mathrm{Q}_{n}\left(\omega,\mathrm{d}\theta\right)\le\delta^{r-q}\limsup_{n\to\infty}\int_{\mathbb{T}}\left|f_{n}\left(\omega,\theta\right)\right|^{q}\mathrm{Q}_{n}\left(\omega,\mathrm{d}\theta\right)\text{,}
\]
where $\lim_{\delta\to\infty}\delta^{r-q}=0$.

\subsection{Convergence of the BPIC and WBIC}

Let $\Pi_{0}\in\mathcal{P}(\mathbb{T})$ denote an arbitrary probability measure. We are now ready to state sufficient conditions for the convergence
of $\mathrm{BPIC}_{n}$ and $\mathrm{WBIC}_{n}$ to their limits.
The following further assumptions are required:
\begin{description}
\item [{A5}] The sequence of average log-likelihoods $\left(n^{-1}\sum_{i=1}^{n}\log\mathrm{p}\left(X_{i}\mid\cdot\right)\right)_{n\in\mathbb{N}}$
is AUI with respect to $\left(\Pi_{n}^{\beta_{n}}\right)_{n\in\mathbb{N}}$,
$\mathrm{P}$-a.s.
\item [{A6}] The sequence $\left(\Pi_{n}^{\beta_{n}}\right)_{n\in\mathbb{N}}$
converges to $\Pi_{0}\in\mathcal{P}\left(\mathbb{T}\right)$, $\mathrm{P}$-a.s.w.
\end{description}
\begin{prop}
Assume that $\left(X_{i}\right)_{i\in\mathbb{N}}$ are IID and that
A1 and A2 hold. If A5 and A6 hold for $\beta_{n}=1$, then 
\begin{align}
\mathrm{BPIC}_{n}\xrightarrow[n\rightarrow\infty]{\mathrm{P}\text{-a.s.}}-2\int_{\mathbb{T}}\mathrm{E}\!\left[\log\mathrm{p}\left(X\mid\theta\right)\right]\Pi_{0}\left(\mathrm{d}\theta\right)\text{.} \label{eq:-bic-convergence}
\end{align}
If A5 and A6 hold with $n\beta_{n}\rightarrow\infty$, then 
\begin{align}
\mathrm{WBIC}_{n}\xrightarrow[n\rightarrow\infty]{\mathrm{P}\text{-a.s.}}-2\int_{\mathbb{T}}\mathrm{E}\!\left[\log\mathrm{p}\left(X\mid\theta\right)\right]\Pi_{0}\left(\mathrm{d}\theta\right)\text{.}  \label{eq:-wbic-convergence}
\end{align}
\end{prop}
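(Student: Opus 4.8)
The plan is to recognise that, up to a vanishing additive term, both $\mathrm{BPIC}_n$ and $\mathrm{WBIC}_n$ are integrals of the form \eqref{eq:-general-integral-expression} to which Theorem~\ref{thm:-feinberg} applies directly. Writing $f_n(\omega,\theta)=-\tfrac{2}{n}\sum_{i=1}^n\log\mathrm{p}(X_i\mid\theta)$ as in \eqref{eq:-generic-average-log-like} and setting $f(\theta)=-2\,\mathrm{E}[\log\mathrm{p}(X\mid\theta)]$, we have
\[
\mathrm{BPIC}_n=\int_{\mathbb{T}}f_n(\omega,\theta)\,\Pi_n(\mathrm{d}\theta)+\frac{2p}{n},\qquad \mathrm{WBIC}_n=\int_{\mathbb{T}}f_n(\omega,\theta)\,\Pi_n^{1/\log n}(\mathrm{d}\theta),
\]
so in each case $\mathrm{Q}_n=\Pi_n^{\beta_n}$ with $\beta_n=1$ for the BPIC and $\beta_n=1/\log n$ for the WBIC, the latter satisfying $n\beta_n=n/\log n\to\infty$. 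The target limit in both displays is exactly $\int_{\mathbb{T}}f(\theta)\,\Pi_0(\mathrm{d}\theta)$. It therefore suffices to verify the three hypotheses of Theorem~\ref{thm:-feinberg}, namely the $\mathrm{P}$-a.s.w. convergence of $(\mathrm{Q}_n)_{n\in\mathbb{N}}$, the continuous convergence of $(f_n)_{n\in\mathbb{N}}$, and the AUI property, and then to dispose of the residual $2p/n$ term for the BPIC.

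The weak-convergence hypothesis is precisely A6 (with limit $\mathrm{Q}=\Pi_0$), and the AUI hypothesis is precisely A5; both are assumed, so the only substantive step is the continuous convergence of $(f_n)_{n\in\mathbb{N}}$ to $f$, $\mathrm{P}$-a.s. Here I would invoke the equivalent characterisation recorded after Theorem~\ref{thm:-feinberg}: it suffices that $f$ be continuous on $\mathbb{T}$ and that, on a single $\mathrm{P}$-a.s. event, $(f_n(\omega,\cdot))_{n\in\mathbb{N}}$ converge to $f$ uniformly on every compact $\mathbb{K}\subset\mathbb{T}$. Assumption A2 supplies exactly such an event: on it, $\sup_{\theta\in\mathbb{K}}|\tfrac{1}{n}\sum_{i=1}^n\log\mathrm{p}(X_i\mid\theta)-\mathrm{E}[\log\mathrm{p}(X\mid\theta)]|\to0$ for every compact $\mathbb{K}$, and multiplying by $-2$ gives the required uniform convergence of $f_n$ to $f$. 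Continuity of $f$ is then automatic: by A1 each partial average $\theta\mapsto\tfrac{1}{n}\sum_{i=1}^n\log\mathrm{p}(X_i\mid\theta)$ is continuous, a uniform limit of continuous functions on each compact set is continuous, and so, by local compactness of $\mathbb{T}$, the map $\theta\mapsto\mathrm{E}[\log\mathrm{p}(X\mid\theta)]$, and hence $f$, is continuous on all of $\mathbb{T}$.

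With the three hypotheses in hand, Theorem~\ref{thm:-feinberg} yields, on a $\mathrm{P}$-a.s. event (the intersection of the almost-sure events furnished by A2, A5 and A6, itself $\mathrm{P}$-a.s.),
\[
\int_{\mathbb{T}}f_n(\omega,\theta)\,\Pi_n^{\beta_n}(\mathrm{d}\theta)\xrightarrow[n\to\infty]{}\int_{\mathbb{T}}f(\theta)\,\Pi_0(\mathrm{d}\theta)=-2\int_{\mathbb{T}}\mathrm{E}[\log\mathrm{p}(X\mid\theta)]\,\Pi_0(\mathrm{d}\theta),
\]
which is \eqref{eq:-wbic-convergence} for the WBIC and, after adding the deterministic term $2p/n\to0$, gives \eqref{eq:-bic-convergence} for the BPIC. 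Finiteness of the right-hand side is delivered as part of the conclusion of Theorem~\ref{thm:-feinberg}.

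I expect the only genuine subtlety to be the passage from A2 to continuous convergence; specifically, the reliance on a single almost-sure event carrying uniform convergence over all compacta simultaneously (which A2 is phrased to guarantee, and which is underwritten by the hemicompactness of $\mathbb{T}$ noted earlier) rather than one event per compact set, together with the routine but necessary bookkeeping of intersecting the finitely many full-measure events arising from A2, A5 and A6. Everything else is a direct substitution into Theorem~\ref{thm:-feinberg}.
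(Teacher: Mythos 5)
Your proposal is correct and follows essentially the same route as the paper's own proof: both identify $\mathrm{Q}_n=\Pi_n^{\beta_n}$ and the averaged log-likelihood as $f_n$, obtain continuous convergence from A1 and A2 (via uniform convergence on compacta and hemicompactness), take AUI and a.s.w.\ convergence directly from A5 and A6, apply Theorem~\ref{thm:-feinberg}, and dispose of the vanishing $2p/n$ term for the BPIC. Your added remarks on the continuity of the limit $f$ and on intersecting the almost-sure events are details the paper leaves implicit, but they do not constitute a different argument.
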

If we replace A6 with A3 and A4, we instead obtain the following more specific
outcome.
\begin{cor} \label{cor:-pbic-wbic-delta-convergence}
Assume that $\left(X_{i}\right)_{i\in\mathbb{N}}$ are IID and $\Pi\left(\mathbb{B}_{\rho}\left(\theta_{0}\right)\right)>0$, for every $\rho>0$. If A1--A5
hold for $\beta_{n}=1$, then 
\[
\mathrm{BPIC}_{n}\xrightarrow[n\rightarrow\infty]{\mathrm{P}\text{-a.s.}}-2\mathrm{E}\!\left[\log\mathrm{p}\left(X\mid\theta_{0}\right)\right]\text{.}
\]
If A1--A5 hold with $n\beta_{n}\rightarrow\infty$, then 
\[
\mathrm{WBIC}_{n}\xrightarrow[n\rightarrow\infty]{\mathrm{P}\text{-a.s.}}-2\mathrm{E}\!\left[\log\mathrm{p}\left(X\mid\theta_{0}\right)\right]\text{.}
\]
\end{cor}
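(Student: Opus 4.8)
The plan is to obtain this corollary directly from the two preceding results, by observing that assumptions A3 and A4, together with the stated prior positivity $\Pi(\mathbb{B}_\rho(\theta_0))>0$ for every $\rho>0$, force the abstract limiting measure $\Pi_0$ of the preceding (unlabelled) convergence proposition to be the Dirac measure $\delta_{\theta_0}$. I would therefore proceed in two stages: first upgrading the consistency of $\Pi_n^{\beta_n}$ to the concrete statement that A6 holds with $\Pi_0=\delta_{\theta_0}$, and then specialising the preceding proposition and evaluating the resulting integral against a point mass.

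In the first stage I would verify that A6 holds with $\Pi_0=\delta_{\theta_0}$ in each of the two cases. For the BPIC we have $\beta_n=1$, so that $n\beta_n=n\to\infty$ automatically, while for the WBIC the hypothesis $n\beta_n\to\infty$ is assumed outright. In either case A1--A4 hold and $\Pi(\mathbb{B}_\rho(\theta_0))>0$ for every $\rho>0$, so Proposition~\ref{prop:-consistency-of-beta-posterior} applies and yields $\Pi_n^{\beta_n}(\bar{\mathbb{B}}_\epsilon(\theta_0))\to 1$, $\mathrm{P}$-a.s., for every $\epsilon>0$. By the equivalence recorded after that proposition (following \citealp[Prop. 6.2]{ghosal2017fundamentals}), this consistency statement \eqref{eq:-consistency-of-beta-posterior} is precisely the assertion that $(\Pi_n^{\beta_n})_{n\in\mathbb{N}}$ converges to $\delta_{\theta_0}$, $\mathrm{P}$-a.s.w. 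Since $\theta_0\in\mathbb{T}$, the measure $\delta_{\theta_0}$ is a legitimate element of $\mathcal{P}(\mathbb{T})$, so A6 is satisfied with the choice $\Pi_0=\delta_{\theta_0}$.

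In the second stage, with A1, A2, and A5 assumed directly and A6 now established with $\Pi_0=\delta_{\theta_0}$, I would invoke the preceding proposition in its $\beta_n=1$ form for the BPIC and its $n\beta_n\to\infty$ form for the WBIC. This gives $\mathrm{P}$-a.s. convergence of each criterion to $-2\int_{\mathbb{T}}\mathrm{E}[\log\mathrm{p}(X\mid\theta)]\,\delta_{\theta_0}(\mathrm{d}\theta)$. It then remains only to evaluate this integral: integration against a Dirac measure reduces to evaluation at its atom, so the limit collapses to $-2\,\mathrm{E}[\log\mathrm{p}(X\mid\theta_0)]$, as claimed. Finiteness of $\mathrm{E}[\log\mathrm{p}(X\mid\theta_0)]$ is secured by applying A2 to the compact singleton $\mathbb{K}=\{\theta_0\}$, which forces the limit of the average log-likelihood at $\theta_0$ to be a finite real number.

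Because the substantive work is already carried by Proposition~\ref{prop:-consistency-of-beta-posterior} and the preceding convergence proposition, there is no genuine analytic obstacle here; the only content is the observation that A3--A4 plus prior positivity pin the limit $\Pi_0$ down to $\delta_{\theta_0}$. The step requiring the most care is confirming that the consistency conclusion \eqref{eq:-consistency-of-beta-posterior} is logically equivalent to $\mathrm{P}$-a.s.w. convergence to $\delta_{\theta_0}$, so that A6 may be legitimately invoked; once this identification is secured, the remainder is bookkeeping together with the one-line evaluation of the Dirac integral.
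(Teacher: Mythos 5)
Your proposal is correct and follows essentially the same route as the paper's own proof in Appendix~\ref{sec:-bpic-wbic-proofs}: both pin down $\Pi_{0}=\delta_{\theta_{0}}$ via Proposition~\ref{prop:-consistency-of-beta-posterior} (using the equivalence with $\mathrm{P}$-a.s.w.\ convergence), then apply the preceding convergence result (itself an instance of Theorem~\ref{thm:-feinberg}) and evaluate the Dirac integral as in \eqref{eq:-delta-measures}. Your extra remark on finiteness of $\mathrm{E}\!\left[\log\mathrm{p}\left(X\mid\theta_{0}\right)\right]$ is a harmless addition not present in the paper.
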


\subsection{Convergence of the DIC}

Finally, to obtain the limit of the DIC, we require the convergence
of the second right-hand term of \eqref{eq:-dic}. To this end, we require the following assumption:
\begin{description}
\item [{A7}] The norm $\left\Vert \cdot\right\Vert$ is AUI with respect
to $\left(\Pi_{n}\right)_{n\in\mathbb{N}}$, $\mathrm{P}$-a.s.
\end{description}
Via Proposition \ref{prop:-mean-convergence}, we can show that, when taken together with A1, A2, 
and A6, A7 implies
\begin{align}
\bar{\theta}_{n}\xrightarrow[n\rightarrow\infty]{\mathrm{P}\text{-a.s.}}\bar{\theta}_{0}=\left(\int_{\mathbb{T}}\theta_{1}\Pi_{0}\!\left(\mathrm{d}\theta\right),\dots,\int_{\mathbb{T}}\theta_{j}\Pi_{0}\!\left(\mathrm{d}\theta\right),\dots,\int_{\mathbb{T}}\theta_{p}\Pi_{0}\!\left(\mathrm{d}\theta\right)\right).\label{eq:-mean-convergence}
\end{align}

If A6 is replaced by A3 and A4, then $\bar{\theta}_{0}=\theta_{0}$. By the same argument used prior to \eqref{eq:-delta-measures}, we have
$\delta_{\bar{\theta}_{n}}\xrightarrow[n\rightarrow\infty]{\mathrm{P}\text{-a.s.w.}}\delta_{\bar{\theta}_{0}}$.
Assumptions A1 and A2 then yield the continuous convergence of $\left(n^{-1}\sum_{i=1}^{n}\log\mathrm{p}\left(X_{i}\mid\cdot\right)\right)_{n\in\mathbb{N}}$
to $\mathrm{E}\left[\log\mathrm{p}\left(X\mid\cdot\right)\right]$, $\mathrm{P}$-a.s., and hence
\begin{align}
\frac{2}{n}\sum_{i=1}^{n}\log\mathrm{p}\left(X_{i}\mid\bar{\theta}_{n}\right)\xrightarrow[n\rightarrow\infty]{\mathrm{P}\text{-a.s.}}2\,\mathrm{E}\left[\log\mathrm{p}\left(X\mid\bar{\theta}_{0}\right)\right]\text{.} \label{eq:-mean-convergence-log-like}
\end{align}
We therefore obtain the following convergence result for $\mathrm{DIC}_{n}$.

\begin{prop} \label{prop:-dic-limit}
Assume that $\left(X_{i}\right)_{i\in\mathbb{N}}$ are IID and $\Pi\left(\mathbb{B}_{\rho}\left(\theta_{0}\right)\right)>0$,
for every $\rho>0$. If A1, A2, and A5--A7 hold for $\beta_{n}=1$,
then
\[
\mathrm{DIC}_{n}\xrightarrow[n\rightarrow\infty]{\mathrm{P}\text{-a.s.}}-4\int_{\mathbb{T}}\mathrm{E}\left[\log\mathrm{p}\left(X\mid\theta\right)\right]\Pi_{0}\!\left(\mathrm{d}\theta\right)+2\,\mathrm{E}\left[\log\mathrm{p}\left(X\mid\bar{\theta}_{0}\right)\right]\text{.}
\]
If A6 is replaced by A3 and A4, then
\[
\mathrm{DIC}_{n}\xrightarrow[n\rightarrow\infty]{\mathrm{P}\text{-a.s.}}-2\,\mathrm{E}\left[\log\mathrm{p}\left(X\mid\theta_{0}\right)\right]\text{.}
\]
\end{prop}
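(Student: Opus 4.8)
The plan is to split $\mathrm{DIC}_n$ into its two summands and handle each separately, since both are instances of \eqref{eq:-general-integral-expression} that the preceding results already control. Write $\mathrm{DIC}_n = A_n + B_n$, where $A_n = -\frac{4}{n}\int_{\mathbb{T}}\sum_{i=1}^n\log\mathrm{p}(X_i\mid\theta)\,\Pi_n(\mathrm{d}\theta)$ is the posterior-averaged term and $B_n = \frac{2}{n}\sum_{i=1}^n\log\mathrm{p}(X_i\mid\bar\theta_n)$ is the plug-in term.

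First I would treat $A_n = 2\int_{\mathbb{T}} f_n(\omega,\theta)\,\Pi_n(\mathrm{d}\theta)$, with $f_n$ as in \eqref{eq:-generic-average-log-like}. Assumptions A1 and A2 give, $\mathrm{P}$-a.s., the continuous convergence of $f_n$ to the continuous limit $\theta\mapsto -2\,\mathrm{E}[\log\mathrm{p}(X\mid\theta)]$; this uses the equivalence of continuous convergence with local uniform convergence of continuous functions noted after Theorem \ref{thm:-feinberg}, together with the hemicompactness of $\mathbb{T}$. Assumption A6 with $\beta_n = 1$ supplies $\Pi_n \to \Pi_0$, $\mathrm{P}$-a.s.w., and A5 supplies the AUI property. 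Theorem \ref{thm:-feinberg} then yields $A_n \xrightarrow[n\rightarrow\infty]{\mathrm{P}\text{-a.s.}} -4\int_{\mathbb{T}}\mathrm{E}[\log\mathrm{p}(X\mid\theta)]\,\Pi_0(\mathrm{d}\theta)$; this is twice the first summand of $\mathrm{BPIC}_n$, whose $2p/n$ correction is asymptotically negligible.

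The plug-in term $B_n$ requires no new work: its limit $2\,\mathrm{E}[\log\mathrm{p}(X\mid\bar\theta_0)]$ is exactly \eqref{eq:-mean-convergence-log-like}, which the discussion preceding the statement obtains by combining the convergence $\bar\theta_n \to \bar\theta_0$ (Proposition \ref{prop:-mean-convergence}, using A1, A2, A6, and A7) with the continuous convergence of the average log-likelihood, evaluated against $\delta_{\bar\theta_n}$ as in \eqref{eq:-delta-measures}. Intersecting the two $\mathrm{P}$-a.s. events on which $A_n$ and $B_n$ converge, I would conclude that $\mathrm{DIC}_n = A_n + B_n$ converges, $\mathrm{P}$-a.s., to the sum of the limits, establishing the first claim.

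For the specialised limit I would replace A6 by A3 and A4 and invoke Proposition \ref{prop:-consistency-of-beta-posterior} with $\beta_n = 1$, which identifies $\Pi_0 = \delta_{\theta_0}$; hence $\int_{\mathbb{T}}\mathrm{E}[\log\mathrm{p}(X\mid\theta)]\,\delta_{\theta_0}(\mathrm{d}\theta) = \mathrm{E}[\log\mathrm{p}(X\mid\theta_0)]$ and, as noted in the text, $\bar\theta_0 = \theta_0$. Substituting collapses $-4\,\mathrm{E}[\log\mathrm{p}(X\mid\theta_0)] + 2\,\mathrm{E}[\log\mathrm{p}(X\mid\theta_0)]$ to $-2\,\mathrm{E}[\log\mathrm{p}(X\mid\theta_0)]$. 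I expect no serious obstacle, since all the analytic content has been front-loaded into A5--A7 and Propositions \ref{prop:-consistency-of-beta-posterior} and \ref{prop:-mean-convergence}; the only point needing mild care is that $B_n$ evaluates the average log-likelihood at the moving argument $\bar\theta_n$, which is legitimate precisely because A1 and A2 deliver continuous, rather than merely pointwise, convergence.
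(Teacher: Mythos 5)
Your proposal is correct and follows essentially the same route as the paper's proof: decompose $\mathrm{DIC}_n$ into the posterior-averaged term (handled by Theorem \ref{thm:-feinberg} under A1, A2, A5, A6, as in \eqref{eq:-int-mean-log-like-convergence}) and the plug-in term (handled by Proposition \ref{prop:-mean-convergence} with A7 plus the continuous convergence from A1--A2, yielding \eqref{eq:-mean-convergence-log-like}), then specialise via Proposition \ref{prop:-consistency-of-beta-posterior} so that $\Pi_0=\delta_{\theta_0}$ and $\bar{\theta}_0=\theta_0$. Your write-up is in fact somewhat more explicit than the paper's about the bookkeeping of constants and the intersection of the almost-sure events, but the substance is identical.
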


\section{Examples}\label{section_examples}

We provide three technical examples, followed by numerical results that illustrate our theoretical findings. These examples are chosen to best demonstrate the workflow of our methodology, rather than to exhaust the scope of its utility. Proofs, derivations, and technical details are collected in Appendix~B and provide useful sketches for practitioners to apply our results to their own settings.

\subsection{The geometric model} \label{sec:the-geometric-model}

We begin with the simple case of the geometric model, whose probability
mass function (PMF) is characterised by
\[
\text{p}\left(x|\theta\right)=\left(1-\theta\right)^{x}\theta\text{,}
\]
for $x\in\mathbb{X}=\mathbb{N}\cup\left\{ 0\right\} $ and $\theta\in\mathbb{T}=\left(0,1\right)$.
We will endow $\theta$ with the uniform prior measure on $\mathbb{T}$;
that is, $\Pi$ has density $\pi\left(\theta\right)=1$ for each $\theta\in\mathbb{T}$.
With the sample mean as $\bar{X}_{n}=n^{-1}\sum_{i=1}^{n}X_{i}$,
let $\bar{\theta}_{n}$ denote the posterior mean value, and suppose
that $\text{E}\left|X\right|<\infty$. Then, for $\beta_{n}>0$ the
information criteria have the forms:
\[
\text{DIC}_{n}=-4\int_{0}^{1}\left\{\bar{X}_{n}\log\left(1-\theta\right)+\log\theta\right\}\Pi_{n}\left(\text{d}\theta\right)
+2\bar{X}_{n}\log\left(1-\bar{\theta}_{n}\right)+2\log\bar{\theta}_{n}\text{,}
\]
\[
\text{BPIC}_{n}=-2\int_{0}^{1}\left\{\bar{X}_{n}\log\left(1-\theta\right)+\log\theta\right\}\Pi_{n}\left(\text{d}\theta\right)+\frac{2}{n}\text{,}
\]
\[
\text{WBIC}_{n}=-2\int_{0}^{1}\left\{\bar{X}_{n}\log\left(1-\theta\right)+\log\theta\right\}\Pi_{n}^{\beta_{n}}\left(\text{d}\theta\right)\text{.}
\]
Let A1--A6 hold with $\theta_{0}=\left\{ 1+\text{E}X\right\}^{-1}$.
Then, Corollary \ref{cor:-pbic-wbic-delta-convergence} and Proposition \ref{prop:-dic-limit} imply that, as $n\to\infty$,
with $n\beta_{n}\to\infty$, we have
\begin{equation}
\text{DIC}_{n},\text{BPIC}_{n},\text{WBIC}_{n}\stackrel[n\to\infty]{\text{P-a.s.}}{\longrightarrow}2\log\left\{ 1+\text{E}X\right\} -2\text{E}X\log\left\{ \frac{\text{E}X}{1+\text{E}X}\right\} \text{.}\label{eq:-000020limit-000020geom}
\end{equation}

\subsection{The normal model} \label{sec:-normal-limit}

We now consider the normal model, defined by the PDF:
\[
\text{p}\left(x|\theta\right)=\left(2\pi\right)^{-p/2}\exp\left\{ -\frac{1}{2}\left\Vert x-\theta\right\Vert ^{2}\right\}\text{,}
\]
for $x\in\mathbb{X}=\mathbb{R}^{p}$ and $\theta\in\mathbb{T}=\mathbb{R}^{p}$,
equipped with the normal prior measure $\Pi$, defined by the PDF
\[
\pi\left(\theta\right)=\left(2\pi\right)^{-p/2}\exp\left\{ -\frac{1}{2}\left\Vert \theta-\mu\right\Vert ^{2}\right\}\text{,}
\]
for some $\mu\in\mathbb{R}^{p}$. We will take $\bar{\theta}_{n}$
to be the multivariate mean of the posterior distribution, and assume
that $X$ has a finite second moment: $\text{E}\left[\left\Vert X\right\Vert ^{2}\right]<\infty$.
Then, for $\beta_{n}>0$, 
\[
\text{DIC}_{n}=p\log\left(2\pi\right)+\frac{2}{n}\int_{\mathbb{R}^{p}}\sum_{i=1}^{n}\left\Vert X_{i}-\theta\right\Vert ^{2}\Pi_{n}\left(\text{d}\theta\right)-\frac{1}{n}\sum_{i=1}^{n}\left\Vert X_{i}-\bar{\theta}_{n}\right\Vert ^{2}\text{,}
\]
\[
\text{BPIC}_{n}=p\log\left(2\pi\right)+\frac{1}{n}\int_{\mathbb{R}^{p}}\sum_{i=1}^{n}\left\Vert X_{i}-\theta\right\Vert ^{2}\Pi_{n}\left(\text{d}\theta\right)+\frac{2p}{n}\text{,}
\]
\begin{equation}
\text{WBIC}_{n}=p\log\left(2\pi\right)+\frac{1}{n}\int_{\mathbb{R}^{p}}\sum_{i=1}^{n}\left\Vert X_{i}-\theta\right\Vert ^{2}\Pi_{n}^{\beta_{n}}\left(\text{d}\theta\right)\text{.}\label{eq:-000020wbic-000020normal}
\end{equation}
Let A1--A6 hold with $\theta_{0}=\text{E}X$. Then, Corollary \ref{cor:-pbic-wbic-delta-convergence} and Proposition \ref{prop:-dic-limit} imply that, as $n\to\infty$, with $n\beta_{n}\to\infty$,
we have
\begin{equation}
\text{DIC}_{n},\text{BPIC}_{n},\text{WBIC}_{n}\stackrel[n\to\infty]{\text{P-a.s.}}{\longrightarrow}p\log\left(2\pi\right)+\text{E}\left[\left\Vert X\right\Vert ^{2}\right]-\left\Vert \text{E}X\right\Vert ^{2}\text{.}\label{eq:-000020normal-000020limit}
\end{equation}

\subsection{The Laplace model} \label{sec:-laplace-model-limits}

We conclude by considering a nonconjugate likelihood--prior pair, taking a Laplace model for $X\in\mathbb{X}=\mathbb{R}$, with PDF
\[
\text{p}\left(x\mid\theta\right)=\frac{1}{2\gamma}\exp\left\{-\frac{\left|x-\mu\right|}{\gamma}\right\}\text{,}
\]
where $\theta=\left(\mu,\gamma\right)\in\mathbb{T}=\left[-m,m\right]\times\left[s^{-1},s\right]$,
with $m>0$ and $s>1$. We will endow $\theta$ with a prior measure
$\Pi$ whose density $\pi$ is strictly positive on $\mathbb{T}$. 

Write $\text{Med}\left(X\right)$ as the median of $X$ and assume
the regularity assumptions that $\text{E}\left|X\right|<\infty$,
$\mu_{0}=\text{Med}\left(X\right)\in\left[-m,m\right]$ and $\gamma_{0}=\text{E}\left|X-\text{Med}\left(X\right)\right|\in\left[s^{-1},s\right]$.
Further write $\bar{\theta}_{n}=\left(\bar{\mu}_{n},\bar{\gamma}_{n}\right)$
to be the posterior mean. Then, for $\beta_{n}>0$, 
\[
\text{DIC}_{n}
=4\int_{\mathbb{T}}\left\{\log\left(2\gamma\right)+\frac{1}{\gamma n}\sum_{i=1}^{n}\left|X_{i}-\mu\right|\right\}\Pi_{n}\left(\text{d}\theta\right)
-2\left\{\log\left(2\bar{\gamma}_{n}\right)+\frac{1}{\bar{\gamma}_{n}n}\sum_{i=1}^{n}\left|X_{i}-\bar{\mu}_{n}\right|\right\}\text{,}
\]
\[
\text{BPIC}_{n}
=2\int_{\mathbb{T}}\left\{\log\left(2\gamma\right)+\frac{1}{\gamma n}\sum_{i=1}^{n}\left|X_{i}-\mu\right|\right\}\Pi_{n}\left(\text{d}\theta\right)+\frac{4}{n}\text{,}
\]
\[
\text{WBIC}_{n}
=2\int_{\mathbb{T}}\left\{\log\left(2\gamma\right)+\frac{1}{\gamma n}\sum_{i=1}^{n}\left|X_{i}-\mu\right|\right\}\Pi_{n}^{\beta_{n}}\left(\text{d}\theta\right)\text{.}
\]
Let A1--A6 hold with $\theta_{0}=\left(\mu_{0},\gamma_{0}\right)$.
Then, Corollary \ref{cor:-pbic-wbic-delta-convergence} and Proposition \ref{prop:-dic-limit} imply that, as $n\to\infty$,
with $n\beta_{n}\to\infty$, we have
\[
\text{DIC}_{n},\text{BPIC}_{n},\text{WBIC}_{n}\stackrel[n\to\infty]{\text{P-a.s.}}{\longrightarrow}2\log\left(2\gamma_{0}\right)+2\text{.}
\]

\subsection{Numerical examples}

The following numerical experiments are implemented in the R programming
language. Code for all results is available on our GitHub page:
\url{https://github.com/hiendn/BayesianIC}.

\subsubsection{DIC for the geometric model}

We continue with the geometric distribution example from Section \ref{sec:the-geometric-model}.
Let us broaden the scenario by replacing the uniform prior with
a beta distribution prior measure $\Pi$, with parameters $\alpha,\beta>0$,
whose law we shall write as $\text{Beta}\left(\alpha,\beta\right)$.
Then, conjugacy implies that the posterior measure $\Pi_{n}$ is the
law $\text{Beta}\left(a_{n},b_{n}\right)$, where $a_{n}=n+\alpha$
and $b_{n}=n\bar{X}_{n}+\beta$. Let $\psi:\mathbb{R}_{>0}\to\mathbb{R}$
denote the digamma function (defined in the Appendix), for which we have the Poincar\'e-type approximation \cite[Eq. 5.11.2]{NIST:2010aa}: for each $a>0$,
\[
\psi\left(a\right)\approx\log a-\frac{1}{2a}\text{.}
\]
Using this fact, we derive in the Appendix the following large-$n$ approximation
of $\text{DIC}_{n}$:
\begin{eqnarray*}
\text{DIC}_{n} & \approx & -2\log\left\{ \frac{n+\alpha}{n+\alpha+n\bar{X}_{n}+\beta}\right\} +2\frac{n\bar{X}_{n}+\beta}{\left(n+\alpha\right)\left(n+\alpha+n\bar{X}_{n}+\beta\right)}\\
 &  & -2\bar{X}_{n}\log\left\{ \frac{n\bar{X}_{n}+\beta}{n+\alpha+n\bar{X}_{n}+\beta}\right\} +2\frac{\left(n+\alpha\right)\bar{X}_{n}}{\left(n\bar{X}_{n}+\beta\right)\left(n+\alpha+n\bar{X}_{n}+\beta\right)}\text{.}
\end{eqnarray*}

Equipped with this computational formula, we design a simulation study
to check the convergence of $\text{DIC}_{n}$ to its large-sample
limit, for a range of values of $\theta_{0}$, with various prior
hyperparameters $\alpha$ and $\beta$, and sample sizes $n$. The
values for $\theta_{0}$ are chosen between $0.1$ and $0.9$, with
prior hyperparameters chosen as pairs with $\alpha,\beta\in\left\{ 1,10,100\right\} $.
Sample sizes are chosen in the range $10^{2}$ to $10^{7}$, and each
scenario is repeated $10$ times. For each $\theta_{0}$, the limiting
value of $\text{DIC}_{n}$ is as per \eqref{eq:-000020limit-000020geom}:
\[
\text{DIC}_{n}\stackrel[n\to\infty]{\text{P-a.s.}}{\longrightarrow}-2\text{E}\left[\log\text{p}\left(X\mid\theta_{0}\right)\right]=-2\left\{ \frac{1-\theta_{0}}{\theta_{0}}\log\left(1-\theta_{0}\right)+\log\left(\theta_{0}\right)\right\}\text{.}
\]
Here, we make the substitution $\theta_{0}=\left\{ 1+\text{E}X\right\}^{-1}$.
From Figure \ref{fig:dic_n_geom}, we see that for smaller $n$, $\text{DIC}_{n}$ can
have high variance, but as $n$ increases, we indeed have convergence
to the theoretical limit for each $\theta_{0}$.

\begin{figure}
    \centering
    \includegraphics[width=0.8\linewidth]{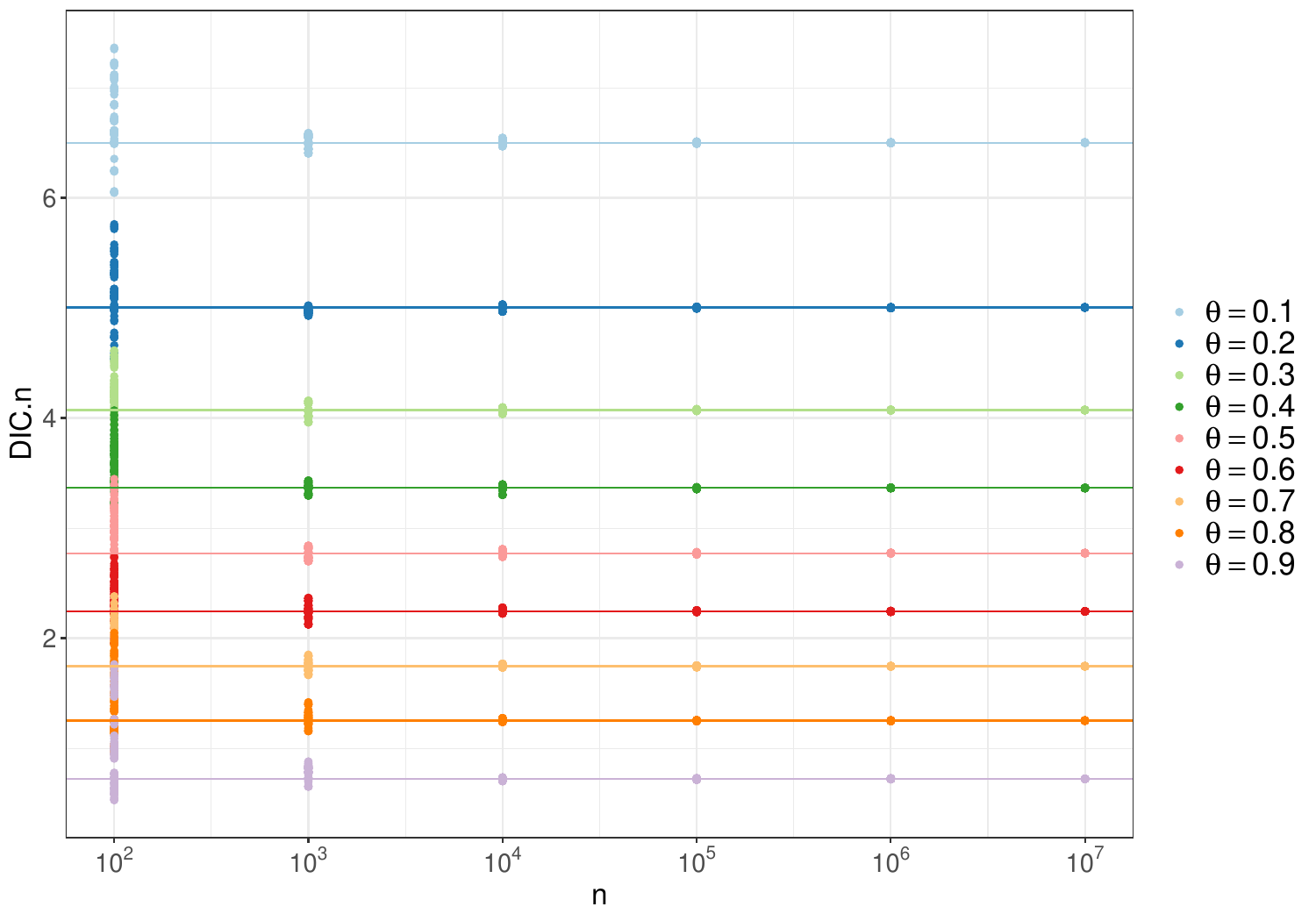}
    \caption{$\text{DIC}_{n}$ values for the geometric model. Points indicate the replicates of $\text{DIC}_{n}$ evaluations for a range of values of $\theta_{0}$ with different sample sizes $n$, while the lines correspond to the limiting value of $\text{DIC}_{n}$ as $n \to \infty$. Both points and lines are coloured according to the true value of $\theta_{0}$ in each case.}
    \label{fig:dic_n_geom}
\end{figure}

\subsubsection{WBIC for the normal model}

We next consider the normal model in Section \ref{sec:-normal-limit} with $p=1$. Here
we investigate the convergence of $\text{WBIC}_{n}$ for different
choices of sequences $\beta_{n}$ that satisfy the condition $n\beta_{n}\to\infty$
as $n\to\infty$. We employ a unit-variance normal prior distribution
$\Pi$ with mean hyperparameter $\mu=0$, for which the power posterior
$\Pi_{n}^{\beta_{n}}$ has the normal law:
\[
\text{N}\left(m_{n},v_{n}\right)\text{,}\qquad m_{n}=\frac{n\beta_{n}\bar{X}_{n}+\mu}{n\beta_{n}+1}\text{,}\quad v_{n}=\frac{1}{n\beta_{n}+1}\text{.}
\]
This is the univariate case of Lemma \ref{lem:pp-normal}.

For any integrable function $g:\mathbb{T}\to\mathbb{R}$, write
\[
\mathrm{E}_{\Pi_{n}^{\beta_{n}}}\bigl[g(\theta)\bigr]=\int g(\theta)\,\Pi_{n}^{\beta_{n}}(\text{d}\theta)\text{.}
\]

In this setting $\text{WBIC}_{n}$ admits a closed form. Writing 
\[
f_{n}(\theta)=\frac{1}{n}\sum_{i=1}^{n}\log\text{p}(X_{i}\mid\theta)=-\frac{1}{2}\log(2\pi)-\frac{1}{2n}\sum_{i=1}^{n}(X_{i}-\theta)^{2}\text{,}
\]
we have 
\[
-2\int f_{n}(\theta)\,\Pi_{n}^{\beta_{n}}(\text{d}\theta)=\log(2\pi)+\frac{1}{n}\sum_{i=1}^{n}X_{i}^{2}-2\bar{X}_{n}\,\mathrm{E}_{\Pi_{n}^{\beta_{n}}}[\theta]+\mathrm{E}_{\Pi_{n}^{\beta_{n}}}[\theta^{2}]\text{,}
\]
and since $\Pi_{n}^{\beta_{n}}$ is the law $\mathrm{N}(m_{n},v_{n})$,
$\mathrm{E}_{\Pi_{n}^{\beta_{n}}}[\theta]=m_{n}$ and $\mathrm{E}_{\Pi_{n}^{\beta_{n}}}[\theta^{2}]=m_{n}^{2}+v_{n}$.
Therefore 
\begin{equation}
\text{WBIC}_{n}=\log(2\pi)+\frac{1}{n}\sum_{i=1}^{n}X_{i}^{2}-2\bar{X}_{n}m_{n}+m_{n}^{2}+v_{n}\text{,}\label{eq:wbic-normal-closed-form}
\end{equation}
which holds for all $n$ and all $\beta_{n}>0$.

For completeness, the same calculation extends to $p\ge1$ with an
$\text{N}_{p}(\mu,\mathbf{I})$ prior; i.e., with 
\[
f_{n}(\theta)=-\frac{p}{2}\log(2\pi)-\frac{1}{2n}\sum_{i=1}^{n}\Vert X_{i}-\theta\Vert^{2}\text{,}
\]
the power posterior is $\mathrm{N}(m_{n},\Sigma_{n})$ with $m_{n}=(n\beta_{n}\bar{X}_{n}+\mu)/(n\beta_{n}+1)$
and $\Sigma_{n}=(n\beta_{n}+1)^{-1}\mathbf{I}$, and 
\[
\text{WBIC}_{n}=p\log(2\pi)+\mathrm{tr}\left(\frac{1}{n}\sum_{i=1}^{n}X_{i}X_{i}^{\top}\right)-2\bar{X}_{n}^{\top}m_{n}+\Vert m_{n}\Vert^{2}+\mathrm{tr}(\Sigma_{n})\text{.}
\]

Using (\ref{eq:wbic-normal-closed-form}), we report $\text{WBIC}_{n}$
under the original power sequence $\beta_{n}=1/\log n$ of \citet{watanabe2013widely},
along with alternative choices $\beta_{n}=1/\log\log n$, $\beta_{n}=1$,
and $\beta_{n}=1/\sqrt{n}$. We also consider $\beta_{n}=1/n$ and
$\beta_{n}=1/(n\log n)$ to probe the necessity of the condition
$n\beta_{n}\to\infty$. Figure \ref{fig:wbic-000020sim} displays
our results for $10$ replicates for each sample size $n$ ranging
from $10^{1}$ to $10^{5}$, with data simulated from $\mathrm{N}(\theta_{0},1)$
and $\theta_{0}=1$.

\begin{figure}
\begin{centering}
\includegraphics[width=15cm]{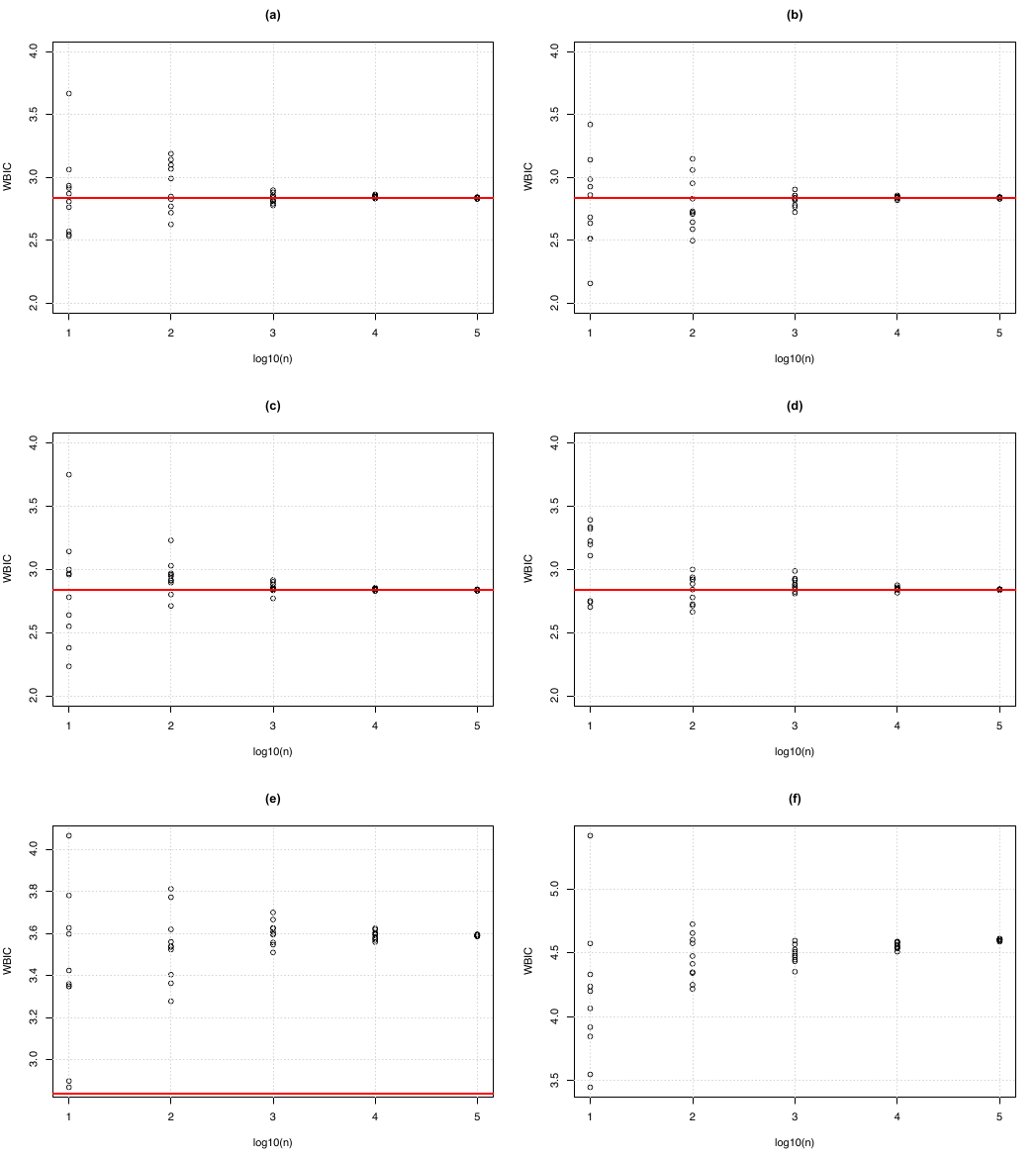}
\par\end{centering}
\caption{\protect\label{fig:wbic-000020sim}Simulated $\text{WBIC}_{n}$ values
under various choices of $\beta_{n}$: (a) $1/\log n$, (b) $1/\log\log n$,
(c) $1$, (d) $1/\sqrt{n}$, (e) $1/n$, and (f) $1/(n\log n)$.
Points indicate the replicates of $\text{WBIC}_{n}$ evaluations, and the solid lines indicate the theoretical limit under Corollary \ref{cor:-pbic-wbic-delta-convergence}.}
\end{figure}

The simulation results are consistent with the theory: whenever $n\beta_{n}\to\infty$,
$\text{WBIC}_{n}$ appears to converge to the predicted limit (\ref{eq:-000020normal-000020limit}),
\[
-2\mathrm{E}\left[\log\text{p}\left(X\mid\theta_{0}\right)\right]=\log(2\pi)+1\text{.}
\]
When $n\beta_{n}$ fails to diverge (Figure \ref{fig:wbic-000020sim}
(e, f)), the trajectories either do not approach this limit or stabilise
markedly more slowly, providing evidence for the necessity of the
growth condition $n\beta_{n}\to\infty$. We emphasise that our results
furnish sufficient conditions for convergence and do not make claims
outside these assumptions.

\section{Discussion}\label{section_discussion}

In this work, we thoroughly investigate the limiting behaviour of several
Bayesian model selection criteria--namely, the DIC, BPIC, and WBIC--as
sample sizes approach infinity, under the condition that posterior
distributions are consistent. We employ generalisations of the dominated
convergence theorem, which have not been widely utilised in the literature,
and we establish several general results regarding Bayesian posterior
consistency, whose utility extends beyond the scope of this study.

It is important to note that our analysis is qualitative in nature
and does not provide finite sample predictions for the behaviour of
the aforementioned information criteria. Specifically, our investigation
does not offer comparative recommendations regarding the utility or
merit of these criteria for model or variable selection.

In the frequentist setting, \citet{Sin:1996aa}, as well as subsequent works
by \citet{nguyen2024panic} and \citet{westerhout2024asymptotic}, have explored general conditions under which
information criteria based on minima of risk functions can achieve
model selection consistency. This consistency is understood as the
asymptotic selection of the minimal complexity model that minimises
risk as $n\to\infty$. Conversely, finite sample properties of information-criteria-like,
penalty-based methods have been studied in works such as those by
\citet{Massart2007} and \citet{koltchinskii2011oracle}, which provide broadly applicable bounds on
the risk incurred by these methods. Within this context, our results
provide only a minimal qualitative guarantee. Specifically, in the terminology of \citet{Claeskens2008}, our findings establish asymptotic efficiency (i.e., AIC-like behaviour), rather than model-selection consistency: when used to compare models, these criteria will almost surely select, in large samples, a model that minimises the expected negative log-likelihood among the candidates. This efficiency guarantee matches that of the AIC, and is akin to selecting models by their sample log-likelihood values.
Further research
is necessary to derive conditions under which Bayesian model selection
criteria, based on posterior integrated likelihoods rather than maximum
likelihoods, can achieve consistent parsimonious model selection in the sense of \cite{Sin:1996aa} and \cite{Claeskens2008}, although we note that parsimony in Bayesian model selection can be achieved under other frameworks, such as that of \cite{ChipmanGeorgeMcCulloch2001PracticalBayes}, \cite{friston2011post}, \cite{JohnsonRossell2012JASA}, \cite{BayarriBergerForteGarciaDonato2012AOS}, and \cite{devashish2025bayesian}.

Additionally, while our primary analysis focuses on the posterior
distribution in the role of the random measure $\Pi_{n}$ and the
log-likelihood function as the integrand, our methodology extends broadly
to the construction of information criteria based on more general
objects. For instance, our framework can be applied to analyse the
behaviour of the variational inference information criteria proposed
by \citet{you2014variational}, generalised posterior measures with corresponding risk
functions, or scenarios in which the posterior is replaced
by Bernstein--von Mises approximations or approximate Bayesian posterior
measures that exhibit consistency, such as in \citet{nguyen2025revisiting}. Thus, our work provides
a flexible template for analysing a broad class of Bayesian model selection approaches. In \cite{mclatchie2025predictive}, the authors show that, under posterior consistency,
the temperature parameter $\beta$ in the power posterior predictive
measure does not affect the large-sample limit: uniformly over $\beta$
on positive compact sets (and even for sequences $\beta_{n}\to0$
with $n\beta_{n}\to\infty$), the power posterior predictive measure
coincides asymptotically with the plug-in predictive measure. Our
results concern different objects, namely the posterior and posterior-based
information criteria (\text{WBIC}, \text{BPIC}, \text{DIC}),
and establish exact large-sample limits under sufficient conditions
that include $n\beta_{n}\to\infty$. Thus, rather than mirroring \cite{mclatchie2025predictive}, our theory complements theirs: both indicate that temperature
tuning has vanishing asymptotic impact, but on different functionals
and via different techniques. Exploring when these two perspectives
align is a natural direction for future work. 

Finally, given our focus on information criteria for parametric models,
we have focused on establishing sufficient conditions for posterior
consistency within this context. However, with modifications, these
conditions could be generalised to align with the deterministic metric
space framework of \citet{miller2021asymptotic}. Furthermore, our analysis raises interesting
questions about alternative definitions of posterior consistency, such
as allowing posterior mass to converge on a set rather than a single
point. Such a perspective could allow for the development of limit
theorems for information criteria in non-identifiable models. We aim
to extend our proofs to these settings in future work.

\begin{center}
\textbf{{\Large Appendix}}
\end{center}

\appendix

\section{Proofs and technical results}\label{section_proofs}

\subsection{Proof of Proposition \ref{prop:-consistency-of-beta-posterior}}\label{proof_prop_consistency_beta_posterior}
As promised, we shall prove a more general result that implies Proposition
\ref{prop:-consistency-of-beta-posterior}. Following Section \ref{sec:introduction},
we define a sequence of data-dependent utility functions $\left(u_{n}\right)_{n\in\mathbb{N}}$,
where, for each $\omega\in\Omega$ and $n\in\mathbb{N}$:
\[
u_{n}\left(\omega,\theta\right)=u_{n}\left(X_{1}(\omega),\dots,X_{n}(\omega);\theta\right)\text{.}
\]
We make the following assumptions regarding $\left(u_{n}\right)_{n\in\mathbb{N}}$,
mirroring assumptions A1--A4. 
\begin{description}
\item[{B1}] For each $n\in\mathbb{N}$, the utility $u_{n}$ is Carath\'{e}odory
in the sense that $u_n\left(\cdot,\theta\right)$ is measurable for
each $\theta\in\mathbb{T}$ and $u_n\left(\omega,\cdot\right)$ is continuous
for each $\omega\in\Omega$. 
\item[{B2}] There exists a deterministic function $u:\mathbb{T}\rightarrow\mathbb{R}$
and a P-a.s. set on which, for every compact $\mathbb{K}\subset\mathbb{T}$,
\begin{equation}
\sup_{\theta\in\mathbb{K}}\left|u_{n}\left(\omega,\theta\right)-u\left(\theta\right)\right|\xrightarrow[n\rightarrow\infty]{}0\text{.}\label{eq:-uniform-convergence-of-utilities}
\end{equation}
\item[{B3}] There exists a unique maximiser $\theta_{0}\in\mathbb{T}$
of $u\left(\theta\right)$, in the sense that, for every $\theta\in\mathbb{T}\backslash\{\theta_0\}$,
\[u(\theta)<u(\theta_0)\text{.}\]
\item[{B4}] There exists a compact set $\mathbb{S}\subsetneq\mathbb{T}$ containing $\theta_0$
such that, on a P-a.s. set,
\[
\limsup_{n\rightarrow\infty}\sup_{\theta\in\mathbb{S}^{\text{c}}}u_{n}\left(\omega,\theta\right)<u\left(\theta_{0}\right)\text{.}
\]
\end{description}
Let $\left(\varPi_{n}\right)_{n\in\mathbb{N}}$ be a sequence of random
measures, where $\varPi_{n}:\Omega\times\mathfrak{B}_{\mathbb{T}}\rightarrow\mathbb{R}_{\ge0}$
for each $n\in\mathbb{N}$, defined by
\[
\varPi_{n}\left(\omega,\mathbb{A}\right)=\frac{\int_{\mathbb{A}}\exp\left(\gamma_{n}u_{n}\left(\omega,\theta\right)\right)\Pi\left(\text{d}\theta\right)}{\int_{\mathbb{T}}\exp\left(\gamma_{n}u_{n}\left(\omega,\theta\right)\right)\Pi\left(\text{d}\theta\right)}\text{,}
\]
for each $\omega\in\Omega$, where $\mathbb{A}\in\mathfrak{B}_{\mathbb{T}}$. Here $\left(\gamma_{n}\right)_{n\in\mathbb{N}}\subset\mathbb{R}_{>0}$
diverges to infinity. The measures $\varPi_{n}$ extend the generalised or Gibbs posterior measures and appear in
the works of \citet{Zhang:2006ab}, \citet{Bissiri:2016aa}, \cite{miller2021asymptotic}, and \citet{Knoblauch:2022aa},
among others. We prove the following result.

\begin{thm}
\label{thm:-generalised-posterior-consistency}
If $\Pi\left(\mathbb{B}_{\rho}\left(\theta_{0}\right)\right)>0$ for every $\rho>0$, and B1--B4 hold, then the sequence $\left(\varPi_{n}\right)_{n\in\mathbb{N}}$
is consistent in the sense that, for every $\epsilon>0$ and for every $\omega$ on a $\mathrm{P}$-a.s. set,
\[
\varPi_{n}\left(\omega,\bar{\mathbb{B}}_{\epsilon}\left(\theta_{0}\right)\right)\xrightarrow[n\rightarrow\infty]{}1\text{.}
\]
\end{thm}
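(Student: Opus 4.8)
The plan is to run a Schwartz--Wald-type argument adapted to the diverging-temperature generalised posterior, showing directly that the posterior mass on the complement of the closed ball vanishes. Fix $\epsilon>0$, write $M=u(\theta_0)$, and note that it suffices to prove $\varPi_n(\omega,\bar{\mathbb{B}}_\epsilon(\theta_0)^{\mathrm c})\to 0$. Writing this quantity as the ratio of $\int_{\bar{\mathbb{B}}_\epsilon(\theta_0)^{\mathrm c}}\exp(\gamma_n u_n)\,\mathrm d\Pi$ over $\int_{\mathbb T}\exp(\gamma_n u_n)\,\mathrm d\Pi$, I would bound the denominator from below and the numerator from above at matching exponential scales $\exp(\gamma_n(M-\cdot))$, and then let $\gamma_n\to\infty$.

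For the denominator, I would first observe that, because $\theta_0\in\mathbb T$ and $\mathbb T$ is the intersection of an open and a closed subset of $\mathbb R^p$, every sufficiently small closed ball $\bar{\mathbb B}_\rho(\theta_0)$ is compact; hence B2 applies on it, and, using the continuity of $u$ (itself a uniform limit of continuous functions), one can fix $\rho$ and then take $n$ large so that $u_n(\omega,\theta)\ge M-\eta_{\mathrm d}$ on $\mathbb B_\rho(\theta_0)$ for an arbitrarily small prescribed $\eta_{\mathrm d}>0$. Restricting the integral to $\mathbb B_\rho(\theta_0)$ and using the hypothesis $\Pi(\mathbb B_\rho(\theta_0))>0$ then gives a lower bound of the form $\exp(\gamma_n(M-\eta_{\mathrm d}))\,\Pi(\mathbb B_\rho(\theta_0))$.

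For the numerator I would split $\bar{\mathbb B}_\epsilon(\theta_0)^{\mathrm c}$ into the part inside $\mathbb S$ and the tail $\mathbb S^{\mathrm c}$. The set $K_\epsilon=\mathbb S\cap\{\theta:\|\theta-\theta_0\|\ge\epsilon\}$ is compact (a closed subset of the compact $\mathbb S$ of B4) and excludes $\theta_0$, so by B3 and continuity $\sup_{K_\epsilon}u=M-\eta_1$ for some $\eta_1>0$, and B2 upgrades this to $u_n\le M-\eta_1/2$ on $K_\epsilon$ for large $n$. On the tail, B4 supplies directly a gap $\eta_2>0$ with $\sup_{\mathbb S^{\mathrm c}}u_n\le M-\eta_2$ for large $n$. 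Bounding each piece by its supremum times the corresponding (at most unit) prior mass yields a numerator bound $\exp(\gamma_n(M-\eta))\cdot C$ with $\eta=\min(\eta_1/2,\eta_2)>0$ and $C\le 2$. Combining the two estimates, and choosing the denominator slack $\eta_{\mathrm d}<\eta$ at the outset, the ratio is at most $C\,\Pi(\mathbb B_\rho(\theta_0))^{-1}\exp(-\gamma_n(\eta-\eta_{\mathrm d}))$, which tends to $0$ since $\gamma_n\to\infty$.

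I expect the main obstacle to be the control of the non-compact tail $\mathbb S^{\mathrm c}$: the uniform convergence of B2 operates only on compacta and cannot by itself prevent posterior mass from escaping towards the boundary of $\mathbb T$ or to infinity, which is precisely the gap that B4 is designed to close. A secondary point is the almost-sure bookkeeping: B1 ensures the defining integrals are well posed, while B2 furnishes a single $\mathrm P$-a.s.\ set valid for all compacta and B4 a single $\mathrm P$-a.s.\ set; intersecting these gives one $\mathrm P$-a.s.\ set on which the entire argument runs simultaneously for every $\epsilon>0$, yielding the stated conclusion.
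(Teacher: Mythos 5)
Your proposal is correct and follows essentially the same route as the paper's proof: the same decomposition of $\bar{\mathbb{B}}_{\epsilon}\left(\theta_{0}\right)^{\mathrm{c}}$ into the compact piece $\mathbb{S}\cap\mathbb{B}_{\epsilon}^{\mathrm{c}}\left(\theta_{0}\right)$ (handled via B2--B3) and the tail $\mathbb{S}^{\mathrm{c}}$ (handled via B4), the same denominator lower bound on a small compact ball around $\theta_{0}$ using local compactness, continuity of $u$, and prior positivity, and the same matched exponential-rate comparison driven by $\gamma_{n}\to\infty$. The only cosmetic difference is that the paper phrases the conclusion as $\varPi_{n}\bigl(\omega,\bar{\mathbb{B}}_{\epsilon}(\theta_{0})\bigr)\ge\{1+\text{ratio}\}^{-1}\to1$ with a single $\omega$-dependent margin $\delta_{\omega}$, whereas you bound the complement's mass directly with separate slacks $\eta_{1},\eta_{2},\eta_{\mathrm{d}}$; these are equivalent bookkeeping choices.
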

\begin{proof}
We start by selecting an arbitrary $\epsilon>0$. By B1 and B2, $u$
is continuous on compact sets, and by B3, $\theta_{0}$ is the unique
maximiser of $u$ on $\mathbb{T}$.

Consider the compact set $\mathbb{S}\cap\mathbb{B}_{\epsilon}^{\text{c}}(\theta_{0})$.
By uniqueness in B3, 
\[
\Delta=u(\theta_{0})-\sup_{\theta\in\mathbb{S}\cap\mathbb{B}_{\epsilon}^{\text{c}}(\theta_{0})}u(\theta)>0\text{.}
\]
Pick any $\delta\in(0,\Delta/4)$. Let $\varOmega_{\mathrm{unif}}$ denote the $\mathrm{P}$-a.s. event
from B2 on which 
\[
\sup_{\theta\in\mathbb{K}}\bigl|u_{n}(\omega,\theta)-u(\theta)\bigr|\xrightarrow[n\to\infty]{}0\quad\text{for every compact }\mathbb{K}\subset\mathbb{T},
\]
and let $\varOmega_{\mathrm{tail}}$ denote the $\mathrm{P}$-a.s.
event from B4 on which there exists a random margin $\kappa(\omega)>0$
such that 
\[
\limsup_{n\to\infty}\sup_{\theta\in\mathbb{S}^{\text{c}}}u_{n}(\omega,\theta)\le u(\theta_{0})-\kappa(\omega)\text{.}
\]
We shall work on $\varOmega=\varOmega_{\mathrm{unif}}\cap\varOmega_{\mathrm{tail}}$ and for each $\omega\in\varOmega$, fix  $\delta_{\omega}\in\bigl(0,\min\{\delta,\kappa(\omega)/3\}\bigr)$.
As $\mathbb T$ is the intersection of an open and closed set, it is locally compact. Hence for $\rho$ sufficiently small, $\overline{\mathbb{B}}_{\rho}(\theta_0)$ is compact. Then,
by continuity of $u$ at $\theta_{0}$,
there exists $\rho(\omega)\in(0,\epsilon)$ such that 
\[
\inf_{\theta\in\overline{\mathbb{B}}_{\rho(\omega)}(\theta_{0})}u(\theta)\ge u(\theta_{0})-\delta_\omega\text{.}
\]
Combining these two expressions yields 
\[
\sup_{\theta\in\mathbb{S}\cap\mathbb{B}_{\epsilon}^{\text{c}}(\theta_{0})}u(\theta)\le u(\theta_{0})-4\delta_\omega\qquad\text{and}\qquad\inf_{\theta\in\overline{\mathbb{B}}_{\rho(\omega)}(\theta_{0})}u(\theta)\ge u(\theta_{0})-\delta_\omega\text{.}
\]

Then there exists $N(\omega)$ such that, for all $n\ge N(\omega)$,
\begin{align}
\inf_{\theta\in\overline{\mathbb{B}}_{\rho(\omega)}(\theta_{0})}u_{n}(\omega,\theta) & \ge\inf_{\theta\in\overline{\mathbb{B}}_{\rho(\omega)}(\theta_{0})}u(\theta)-\delta_{\omega}\ge u(\theta_{0})-2\delta_{\omega}\text{,}\label{eq:ball-lower-alt}\\
\sup_{\theta\in\mathbb{S}\cap\mathbb{B}_{\epsilon}^{\text{c}}(\theta_{0})}u_{n}(\omega,\theta) & \le\sup_{\theta\in\mathbb{S}\cap\mathbb{B}_{\epsilon}^{\text{c}}(\theta_{0})}u(\theta)+\delta_{\omega}\ \le\ u(\theta_{0})-3\delta_{\omega}\text{,}\label{eq:ring-upper-alt}\\
\sup_{\theta\in\mathbb{S}^{\text{c}}}u_{n}(\omega,\theta) & \le u(\theta_{0})-3\delta_{\omega}\text{.}\label{eq:tail-upper-alt}
\end{align}

For $\mathbb{A}\in\mathfrak{B}_{\mathbb{T}}$, write 
\[
I_{n}(\omega;\mathbb{A})=\int_{\mathbb{A}}\exp\bigl(\gamma_{n}u_{n}(\omega,\theta)\bigr)\,\Pi(\text{d}\theta)\text{.}
\]
By prior positivity, $\Pi\bigl(\overline{\mathbb{B}}_{\rho(\omega)}(\theta_{0})\bigr)>0$, and using (\ref{eq:ball-lower-alt}), we have 
\begin{align}
I_{n}\bigl(\omega;\overline{\mathbb{B}}_{\rho(\omega)}(\theta_{0})\bigr) & =\int_{\overline{\mathbb{B}}_{\rho(\omega)}(\theta_{0})}\exp\bigl(\gamma_{n}u_{n}(\omega,\theta)\bigr)\,\Pi(\text{d}\theta)\nonumber \\
 & \ge\int_{\overline{\mathbb{B}}_{\rho(\omega)}(\theta_{0})}\exp\Bigl(\gamma_{n}\inf_{\vartheta\in\overline{\mathbb{B}}_{\rho(\omega)}(\theta_{0})}u_{n}(\omega,\vartheta)\Bigr)\,\Pi(\text{d}\theta)\nonumber \\
 & =\exp\Bigl(\gamma_{n}\inf_{\vartheta\in\overline{\mathbb{B}}_{\rho(\omega)}(\theta_{0})}u_{n}(\omega,\vartheta)\Bigr)\ \Pi\bigl(\overline{\mathbb{B}}_{\rho(\omega)}(\theta_{0})\bigr)\nonumber \\
 & \ge\Pi\bigl(\overline{\mathbb{B}}_{\rho(\omega)}(\theta_{0})\bigr)\ \exp\bigl(\gamma_{n}(u(\theta_{0})-2\delta_{\omega})\bigr)\text{.}\label{eq:denom-lb-alt}
\end{align}
Then, using the inclusion $\overline{\mathbb{B}}_{\epsilon}^{\text{c}}(\theta_{0})\subseteq\bigl(\mathbb{S}\cap\mathbb{B}_{\epsilon}^{\text{c}}(\theta_{0})\bigr)\cup\mathbb{S}^{\text{c}}$,
we have:
\begin{eqnarray}
I_{n}\bigl(\omega;\overline{\mathbb{B}}_{\epsilon}^{\text{c}}(\theta_{0})\bigr) & \le & \int_{\mathbb{S}\cap\mathbb{B}_{\epsilon}^{\text{c}}(\theta_{0})}\exp\bigl(\gamma_{n}u_{n}(\omega,\theta)\bigr)\Pi(\text{d}\theta)+\int_{\mathbb{S}^{\text{c}}}\exp\bigl(\gamma_{n}u_{n}(\omega,\theta)\bigr)\Pi(\text{d}\theta)\nonumber \\
 & \le & \int_{\mathbb{S}\cap\mathbb{B}_{\epsilon}^{\text{c}}(\theta_{0})}\exp\Bigl(\gamma_{n}\sup_{\vartheta\in\mathbb{S}\cap\mathbb{B}_{\epsilon}^{\text{c}}(\theta_{0})}u_{n}(\omega,\vartheta)\Bigr)\Pi(\text{d}\theta)\nonumber \\
 &  & +\int_{\mathbb{S}^{\text{c}}}\exp\Bigl(\gamma_{n}\sup_{\vartheta\in\mathbb{S}^{\text{c}}}u_{n}(\omega,\vartheta)\Bigr)\Pi(\text{d}\theta)\nonumber \\
 & = & \Pi\bigl(\mathbb{S}\cap\mathbb{B}_{\epsilon}^{\text{c}}(\theta_{0})\bigr)\exp\Bigl(\gamma_{n}\sup_{\vartheta\in\mathbb{S}\cap\mathbb{B}_{\epsilon}^{\text{c}}(\theta_{0})}u_{n}(\omega,\vartheta)\Bigr)\nonumber \\
 &  & +\Pi\bigl(\mathbb{S}^{\text{c}}\bigr)\exp\Bigl(\gamma_{n}\sup_{\vartheta\in\mathbb{S}^{\text{c}}}u_{n}(\omega,\vartheta)\Bigr)\nonumber \\
 & \le & \Pi\bigl(\mathbb{S}\cap\mathbb{B}_{\epsilon}^{\text{c}}(\theta_{0})\bigr)\exp\bigl(\gamma_{n}(u(\theta_{0})-3\delta_{\omega})\bigr)\nonumber \\
 &  & +\Pi\bigl(\mathbb{S}^{\text{c}}\bigr)\exp\bigl(\gamma_{n}(u(\theta_{0})-3\delta_{\omega})\bigr)\nonumber \\
 & = & \left\{\Pi\bigl(\mathbb{S}\cap\mathbb{B}_{\epsilon}^{\text{c}}(\theta_{0})\bigr)+\Pi(\mathbb{S}^{\text{c}})\right\}\exp\bigl(\gamma_{n}(u(\theta_{0})-3\delta_{\omega})\bigr)\nonumber \\
 & \le & \Pi(\mathbb{T})\exp\bigl(\gamma_{n}(u(\theta_{0})-3\delta_{\omega})\bigr)\text{,}\label{eq:numer-ub-alt}
\end{eqnarray}
where the final inequality uses (\ref{eq:ring-upper-alt}) and (\ref{eq:tail-upper-alt}).
Combining (\ref{eq:denom-lb-alt}) and (\ref{eq:numer-ub-alt}), for
all $n\ge N(\omega)$, 
\[
\frac{I_{n}\bigl(\omega;\overline{\mathbb{B}}_{\epsilon}^{\text{c}}(\theta_{0})\bigr)}{I_{n}\bigl(\omega;\overline{\mathbb{B}}_{\rho(\omega)}(\theta_{0})\bigr)}\ \le\ \frac{\Pi(\mathbb{T})}{\Pi\bigl(\overline{\mathbb{B}}_{\rho(\omega)}(\theta_{0})\bigr)}\ \exp\bigl(-\gamma_{n}\delta_{\omega}\bigr)\xrightarrow[n\to\infty]{}0\text{,}
\]
and hence 
\[
\varPi_{n}\bigl(\omega,\overline{\mathbb{B}}_{\epsilon}(\theta_{0})\bigr)\ge\left\{ 1+\frac{I_{n}\bigl(\omega;\overline{\mathbb{B}}_{\epsilon}^{\text{c}}(\theta_{0})\bigr)}{I_{n}\bigl(\omega;\overline{\mathbb{B}}_{\rho(\omega)}(\theta_{0})\bigr)}\right\} ^{-1}\xrightarrow[n\to\infty]{}1\text{.}
\]
Since the above holds for every $\omega$ on the $\mathrm{P}$-a.s.
event $\varOmega$, the claim follows. 
\end{proof}

We note that this proof follows essentially the same steps as the proof of \citet[Thm. 1.3.4]{Ghosh:2003aa}, which is specialised to the situation where
\begin{equation}
u_{n}\left(\omega,\theta\right)=\frac{1}{n}\sum_{i=1}^{n}\log\text{p}\left(X_{i}(\omega)\mid\theta\right)\label{eq:-utility-average-like}
\end{equation}
is the log-likelihood of an IID sequence $\left(X_{i}\right)_{i\in\left[n\right]}$
and $\gamma_{n}=n$. We also point out that our result is complementary
to the abstract methods of \citet[Sec. 2]{miller2021asymptotic} (in particular, Theorem 3(1,2)),
where the author provides sufficient conditions for the posterior consistency of non-stochastic generalised posterior
measures in the case when $\gamma_{n}=n$. In fact, our assumptions
B1--B4 can be read as stochastic counterparts of \citet[Thm. 3(2)]{miller2021asymptotic}, specialised to $\mathbb{T}\subset\mathbb{R}^{p}$.
Our proof establishes the required stochastic assumptions in the case
when $\mathbb{T}\subset\mathbb{R}^{p}$ and makes the generalisation
to generic sequences $\left(\gamma_{n}\right)_{n\in\mathbb{N}}$. To prove Proposition \ref{prop:-consistency-of-beta-posterior}
using Theorem \ref{thm:-generalised-posterior-consistency}, we observe that B1--B4 are satisfied by A1--A4 when taking $u_{n}$ of the form \eqref{eq:-utility-average-like}.
The result is then implied by setting $\gamma_{n}=n\beta_{n}$. Next, we prove a minor extension to Theorem \ref{thm:-generalised-posterior-consistency}.

\begin{prop}
\label{prop:eta-rescale-short}
Assume the hypotheses of Theorem \ref{thm:-generalised-posterior-consistency}.
Let $\eta:\mathbb{R}\to\mathbb{R}$ be monotone increasing and
satisfy
\[
\eta(a\gamma_{n})-\eta(b\gamma_{n})\xrightarrow[n\to\infty]{}\infty,\qquad\text{for every }a>b,
\]
for the sequence $(\gamma_{n})_{n\in\mathbb{N}}$ defining $\varPi_n$. Define $\varPi_{n}^{(\eta)}$
by replacing $\exp\bigl(\gamma_{n}u_{n}(\omega,\theta)\bigr)$ with
$\exp\bigl(\eta(\gamma_{n}u_{n}(\omega,\theta))\bigr)$ in the numerator
and denominator of $\varPi_{n}$. Then the conclusion of Theorem \ref{thm:-generalised-posterior-consistency}
holds for $\varPi_{n}^{(\eta)}$, i.e., for every $\epsilon>0$,
\[
\varPi_{n}^{(\eta)}\bigl(\omega,\overline{\mathbb{B}}_{\epsilon}(\theta_{0})\bigr)\xrightarrow[n\to\infty]{\mathrm{P}\text{-a.s.}}1\text{.}
\]
\end{prop}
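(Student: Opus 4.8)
The plan is to rerun the argument in the proof of Theorem \ref{thm:-generalised-posterior-consistency} essentially verbatim, up to the point where the exponential weights enter, and then to replace the final exponential comparison by the growth hypothesis on $\eta$. The crucial observation is that the entire geometric scaffolding of that proof---the $\mathrm{P}$-a.s.\ event $\varOmega=\varOmega_{\mathrm{unif}}\cap\varOmega_{\mathrm{tail}}$, the random margin $\kappa(\omega)$, the constants $\delta_\omega$ and $\rho(\omega)$, the threshold $N(\omega)$, and above all the three uniform estimates \eqref{eq:ball-lower-alt}, \eqref{eq:ring-upper-alt}, and \eqref{eq:tail-upper-alt}---depends only on the utilities $u_n$ and their limit $u$, and is wholly unaffected by the reweighting. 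Hence all of these objects may be reused without change, and only the two places where $\exp(\gamma_n\,\cdot)$ was used need attention.

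First I would fix $\epsilon>0$ and $\omega\in\varOmega$, and take $n\ge N(\omega)$, so that \eqref{eq:ball-lower-alt}--\eqref{eq:tail-upper-alt} are in force. Writing $I_n^{(\eta)}(\omega;\mathbb{A})=\int_{\mathbb{A}}\exp\bigl(\eta(\gamma_n u_n(\omega,\theta))\bigr)\,\Pi(\mathrm{d}\theta)$ for the reweighted masses, I would use that $\gamma_n>0$ together with the monotonicity of $\eta$ to transport each of the three bounds through the map $t\mapsto\eta(\gamma_n t)$. On the ball, \eqref{eq:ball-lower-alt} gives $\eta(\gamma_n u_n(\omega,\theta))\ge\eta(\gamma_n(u(\theta_0)-2\delta_\omega))$ for $\theta\in\overline{\mathbb{B}}_{\rho(\omega)}(\theta_0)$, whence, exactly as in \eqref{eq:denom-lb-alt},
\[
I_n^{(\eta)}\bigl(\omega;\overline{\mathbb{B}}_{\rho(\omega)}(\theta_0)\bigr)\ \ge\ \Pi\bigl(\overline{\mathbb{B}}_{\rho(\omega)}(\theta_0)\bigr)\,\exp\bigl(\eta(\gamma_n(u(\theta_0)-2\delta_\omega))\bigr)\text{.}
\]
Symmetrically, splitting $\overline{\mathbb{B}}_\epsilon^{\text{c}}(\theta_0)\subseteq(\mathbb{S}\cap\mathbb{B}_\epsilon^{\text{c}}(\theta_0))\cup\mathbb{S}^{\text{c}}$ and applying \eqref{eq:ring-upper-alt} and \eqref{eq:tail-upper-alt} under the same monotone map yields, as in \eqref{eq:numer-ub-alt},
\[
I_n^{(\eta)}\bigl(\omega;\overline{\mathbb{B}}_\epsilon^{\text{c}}(\theta_0)\bigr)\ \le\ \Pi(\mathbb{T})\,\exp\bigl(\eta(\gamma_n(u(\theta_0)-3\delta_\omega))\bigr)\text{.}
\]

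Dividing these two estimates, the ratio $I_n^{(\eta)}(\omega;\overline{\mathbb{B}}_\epsilon^{\text{c}}(\theta_0))/I_n^{(\eta)}(\omega;\overline{\mathbb{B}}_{\rho(\omega)}(\theta_0))$ is bounded by $\{\Pi(\mathbb{T})/\Pi(\overline{\mathbb{B}}_{\rho(\omega)}(\theta_0))\}\exp(\eta(\gamma_n b)-\eta(\gamma_n a))$ with $a=u(\theta_0)-2\delta_\omega$ and $b=u(\theta_0)-3\delta_\omega$. Since $\delta_\omega>0$ we have $a>b$, so the growth hypothesis $\eta(a\gamma_n)-\eta(b\gamma_n)\to\infty$ applies with these fixed reals and forces $\eta(\gamma_n b)-\eta(\gamma_n a)\to-\infty$; thus the ratio tends to $0$, and $\varPi_n^{(\eta)}(\omega,\overline{\mathbb{B}}_\epsilon(\theta_0))\ge\{1+\text{(ratio)}\}^{-1}\to1$. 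As $\omega\in\varOmega$ was arbitrary, the conclusion follows.

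I do not anticipate a genuine obstacle, as the proof is almost mechanical; the only point needing care is to confirm that the growth condition is invoked legitimately. The quantities $a$ and $b$ are random only through $\delta_\omega$, but for each fixed $\omega$ they are deterministic constants with $a>b$, so the hypothesis---quantified over \emph{all} pairs $a>b$ with no sign restriction---applies directly, even though $u(\theta_0)$, and hence $a\gamma_n$ and $b\gamma_n$, may be negative. It is worth flagging that this step is precisely where the linear gap $\gamma_n\delta_\omega$ of the exponential case is replaced: the growth condition is exactly the abstraction needed to generalise ``$\eta(t)=t$ produces a divergent gap'' to arbitrary monotone reweightings.
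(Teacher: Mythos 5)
Your proof is correct and follows essentially the same route as the paper's own proof: rerun the argument of Theorem \ref{thm:-generalised-posterior-consistency} on the same $\mathrm{P}$-a.s.\ event, use monotonicity of $\eta$ (together with $\gamma_{n}>0$) to transport the uniform bounds \eqref{eq:ball-lower-alt}--\eqref{eq:tail-upper-alt} through the reweighting, and replace the exponential decay of the mass ratio by the divergence hypothesis $\eta(a\gamma_{n})-\eta(b\gamma_{n})\to\infty$ applied with $a=u(\theta_{0})-2\delta_{\omega}>b=u(\theta_{0})-3\delta_{\omega}$. The only difference is expository: you spell out the monotone-transport step and the sign-free applicability of the growth condition, which the paper leaves implicit in its ``repeat the proof verbatim'' instruction.
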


\begin{proof}
Repeat the proof of Theorem \ref{thm:-generalised-posterior-consistency}
verbatim, recalling that there exists a $\mathrm{P}$-a.s. event $\varOmega$
on which, for each $\omega\in\varOmega$, there exists some $\delta_{\omega}>0$
such that, for all large $n$ and for some $\rho(\omega)\in(0,\epsilon)$,
\[
\inf_{\theta\in\overline{\mathbb{B}}_{\rho(\omega)}(\theta_{0})}u_{n}(\omega,\theta)\ge u(\theta_{0})-2\delta_{\omega},\qquad
\sup_{\theta\in(\mathbb{S}\cap\mathbb{B}_{\epsilon}^{\text{c}}(\theta_{0}))\cup\mathbb{S}^{\text{c}}}u_{n}(\omega,\theta)\le u(\theta_{0})-3\delta_{\omega}.
\]
With $\varPi_{n}^{(\eta)}$, the same ratio bound is obtained but with
\[
\exp\bigl(\gamma_{n}(u(\theta_{0})-3\delta_{\omega})\bigr)\big/\exp\bigl(\gamma_{n}(u(\theta_{0})-2\delta_{\omega})\bigr)
\]
replaced by
\[
\exp\Bigl(\eta\bigl(\gamma_{n}(u(\theta_{0})-3\delta_{\omega})\bigr)-\eta\bigl(\gamma_{n}(u(\theta_{0})-2\delta_{\omega})\bigr)\Bigr)\text{.}
\]
Since $u(\theta_{0})-2\delta_{\omega}>u(\theta_{0})-3\delta_{\omega}$,
the assumption on $\eta$ gives that the latter term tends to $0$,
and the conclusion follows exactly as before.
\end{proof}

\begin{rem}
A simple example of an $\eta$ that satisfies the above result is any increasing homogeneous
function of odd degree, such as $\eta\left(x\right)=x^{2k+1}$, for $k\in\mathbb{N}\cup\left\{ 0\right\}$.
We note that, to the best of our knowledge, this fact has not been
reported in any prior works, although we also do not know of any immediate
application.
\end{rem}

\begin{rem}
Note that Theorem \ref{thm:-generalised-posterior-consistency} establishes Bayesian consistency of the sequence
of random measures $\left(\varPi_{n}\right)_{n\in\mathbb{N}}$. Namely,
it establishes the weak convergence, $\mathrm{P}$-a.s., of $\left(\varPi_{n}\right)_{n\in\mathbb{N}}$
to the measure $\delta_{\theta_{0}}$. Such results are best compared
with parametric consistency results in $M$-estimator and extremal estimator
theory (see, e.g., \citealp[Thm. 5.5]{Shapiro2021}). Alternatively, one may consider
the convergence of the posterior distribution of the local (scaled
and centred) parameter $\vartheta=\sqrt{n}\left(\theta-\theta_{0}\right)$,
which typically converges in total variation to a normal distribution,
under appropriate regularity conditions. These results are commonly
referred to as Bernstein--von Mises theorems and are distinct from
posterior consistency theorems; such results can be analogously
compared to asymptotic normality theorems in $M$-estimator and extremal
estimator theory (see, e.g., \citealp[Thm. 5.8]{Shapiro2021}). Various Bernstein--von Mises theorems can be found, for example, in \citet[Thm. 7.89]{schervish2012theory}, \citet[Thm. 21]{ferguson2017course}, \citet[Thm. 1.4.2]{Ghosh:2003aa}, \citet[Thm. 12.1]{ghosal2017fundamentals}, and \citet[Sec. 3]{miller2021asymptotic}, among many
other works. Like asymptotic normality results, and compared with the corresponding Bayesian consistency results, Bernstein--von Mises
theorems typically require much stronger assumptions regarding the smoothness
and growth of the utility function around $\theta_{0}$
to obtain the strong normal limit theorems. Thus, analogously
again, the minimal conditions under which Bayesian consistency or Bernstein--von
Mises theorems hold are typically of independent interest.
Lastly, we note that Bernstein--von Mises theorems make
conclusions regarding the posterior distributions of the local parameter
$\vartheta$, rather than those of $\theta$, and are thus not required
in our setting.
\end{rem}

When $\mathbb{T}$ is convex, we say that $u:\mathbb{T}\to\mathbb{R}$
is quasiconvex if for each $\theta,\tau\in\mathbb{T}$ and $\lambda\in\left[0,1\right]$,
\[
u\left(\lambda\theta+\left(1-\lambda\right)\tau\right)\le\max\left\{ u\left(\theta\right),u\left(\tau\right)\right\}\text{.}
\]
Furthermore, we say that $u$ is quasiconcave if $-u$ is quasiconvex.
The class of quasiconvex functions includes the convex functions, and
forms a generalisation that preserves the property of convex sublevel
sets. Using the notion of quasiconvexity, we provide the following
result for verifying B4, providing a generalisation of the convexity-based
result of \citet[Lem. 27]{miller2021asymptotic}, and when taken together with Theorem \ref{thm:-generalised-posterior-consistency}, we obtain
a stochastic quasiconvexity-based version of \citet[Thm. 3(3)]{miller2021asymptotic}.
\begin{lem} \label{lem:-quasiconcave}
Let $\mathbb{T}\subset\mathbb{R}^{p}$ and assume either that $\theta_{0}$ is in the interior of $\mathbb{T}$, or that $\mathbb{T}$ is the intersection of an open and closed set.
Suppose that $\left(u_{n}\right)_{n\in\mathbb{N}}$ is a sequence
of functions where, for each $n\in\mathbb{N}$, $u_{n}:\mathbb{T}\to\mathbb{R}$ is continuous
and quasiconcave, and
\begin{equation}
\sup_{\theta\in\mathbb{K}}\left|u_{n}\left(\theta\right)-u\left(\theta\right)\right|\underset{n\rightarrow\infty}{\longrightarrow}0\text{,}\label{eq:-uniform-convergence-quasiconcave}
\end{equation}
for each compact $\mathbb{K}\subset\mathbb{T}$, for some $u:\mathbb{T}\rightarrow\mathbb{R}$.
Then:
\begin{description}
\item[{(i)}] The function $u$ is continuous and quasiconcave.
\item[{(ii)}] If $\theta_0$ satisfies $u\left(\theta\right)<u\left(\theta_{0}\right)$ for
every $\theta\in\mathbb{T}\backslash\left\{\theta_{0}\right\}$,
then, for every $\epsilon>0$,
\[
\limsup_{n\rightarrow\infty}\sup_{\theta\in\mathbb{T}\backslash\mathbb{B}_{\epsilon}\left(\theta_{0}\right)}u_{n}\left(\theta\right)<u\left(\theta_{0}\right)\text{.}
\]
\end{description}
\end{lem}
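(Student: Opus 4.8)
The plan is to treat the two parts separately, deriving everything from the uniform-on-compacts convergence in \eqref{eq:-uniform-convergence-quasiconcave} together with the fact that quasiconcavity is preserved under pointwise limits. Throughout I would use that \eqref{eq:-uniform-convergence-quasiconcave} implies pointwise convergence $u_{n}(\theta)\to u(\theta)$ for each fixed $\theta$, since singletons are compact.

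For part (i), I would first observe that continuity of $u$ needs no local-compactness hypothesis at all: given $\theta^{\ast}\in\mathbb{T}$ and any sequence $\theta_{m}\to\theta^{\ast}$ in $\mathbb{T}$, the set $\mathbb{K}=\{\theta_{m}:m\in\mathbb{N}\}\cup\{\theta^{\ast}\}$ is compact, so $u_{n}\to u$ uniformly on $\mathbb{K}$; as a uniform limit of the continuous restrictions $u_{n}|_{\mathbb{K}}$, the restriction $u|_{\mathbb{K}}$ is continuous, whence $u(\theta_{m})\to u(\theta^{\ast})$. For quasiconcavity I would fix $\theta,\tau\in\mathbb{T}$ and $\lambda\in[0,1]$, note that $\lambda\theta+(1-\lambda)\tau\in\mathbb{T}$ since $\mathbb{T}$ is convex (implicit in the $u_{n}$ being quasiconcave), apply the inequality $u_{n}(\lambda\theta+(1-\lambda)\tau)\ge\min\{u_{n}(\theta),u_{n}(\tau)\}$ for each $n$, and pass to the pointwise limit.

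For part (ii), I would argue by contradiction. Writing $s_{n}=\sup_{\theta\in\mathbb{T}\backslash\mathbb{B}_{\epsilon}(\theta_{0})}u_{n}(\theta)$ and $M=u(\theta_{0})$, suppose $\limsup_{n}s_{n}\ge M$; then I can extract a subsequence $(n_{k})$ and witnesses $\theta_{n_{k}}$ with $\|\theta_{n_{k}}-\theta_{0}\|\ge\epsilon$ and $\liminf_{k}u_{n_{k}}(\theta_{n_{k}})\ge M$. The central idea is to use quasiconcavity to pull these possibly far-away points back onto a small sphere about $\theta_{0}$: fixing a radius $\epsilon'\in(0,\epsilon)$, set $\zeta_{n_{k}}=\theta_{0}+\epsilon'(\theta_{n_{k}}-\theta_{0})/\|\theta_{n_{k}}-\theta_{0}\|$, the point of the segment $[\theta_{0},\theta_{n_{k}}]$ at distance $\epsilon'$ from $\theta_{0}$. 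By convexity $\zeta_{n_{k}}\in\mathbb{T}$, and since $\zeta_{n_{k}}$ is a genuine convex combination of $\theta_{0}$ and $\theta_{n_{k}}$, quasiconcavity gives $u_{n_{k}}(\zeta_{n_{k}})\ge\min\{u_{n_{k}}(\theta_{0}),u_{n_{k}}(\theta_{n_{k}})\}$; combined with $u_{n_{k}}(\theta_{0})\to M$ this yields $\liminf_{k}u_{n_{k}}(\zeta_{n_{k}})\ge M$. Choosing $\epsilon'$ so small that $\bar{\mathbb{B}}_{\epsilon'}(\theta_{0})\cap\mathbb{T}$ is compact, I would pass to a convergent sub-subsequence $\zeta_{n_{k_{j}}}\to\zeta^{\ast}\in\mathbb{T}$ with $\|\zeta^{\ast}-\theta_{0}\|=\epsilon'>0$; uniform convergence on that compact set together with continuity of $u$ then gives $u(\zeta^{\ast})=\lim_{j}u_{n_{k_{j}}}(\zeta_{n_{k_{j}}})\ge M=u(\theta_{0})$, contradicting the uniqueness of the maximiser and proving $\limsup_{n}s_{n}<M$.

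The main obstacle is exactly the step of keeping these pulled-back points inside $\mathbb{T}$: the complement $\mathbb{T}\backslash\mathbb{B}_{\epsilon}(\theta_{0})$ is generally non-compact, so the witnesses $\theta_{n_{k}}$ need not converge, and even their radius-$\epsilon'$ shadows $\zeta_{n_{k}}$ could a priori converge to a boundary point lying outside $\mathbb{T}$. This is precisely where the hypothesis enters. When $\theta_{0}$ is interior one may take $\bar{\mathbb{B}}_{\epsilon'}(\theta_{0})\subset\mathbb{T}$ compact outright; when $\mathbb{T}=U\cap C$ is the intersection of an open set $U$ and a closed set $C$, one chooses $\epsilon'$ with $\bar{\mathbb{B}}_{\epsilon'}(\theta_{0})\subset U$, so that $\bar{\mathbb{B}}_{\epsilon'}(\theta_{0})\cap\mathbb{T}=\bar{\mathbb{B}}_{\epsilon'}(\theta_{0})\cap C$ is compact. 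In either case local compactness at $\theta_{0}$ secures $\zeta^{\ast}\in\mathbb{T}$, and since local compactness holds for all sufficiently small radii, a single $\epsilon'<\epsilon$ can be taken to satisfy both requirements at once.
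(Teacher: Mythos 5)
Your proposal is correct. Part (i) is essentially the paper's argument: pointwise limits of the quasiconcavity inequality at the three relevant points, and continuity from uniform convergence of continuous functions on compacta (your reduction to the compact set consisting of a convergent sequence plus its limit is a slightly more explicit rendering of the same fact). Part (ii), however, takes a genuinely different route. The paper argues directly: it works on the sphere $\mathbb{S}_{\epsilon}=\{\theta\in\mathbb{T}:\Vert\theta-\theta_{0}\Vert=\epsilon\}$ at the \emph{given} radius $\epsilon$, establishes a uniform gap $u_{n}(\theta_{0})-\sup_{\theta\in\mathbb{S}_{\epsilon}}u_{n}(\theta)\ge\alpha_{\epsilon}>0$ for all large $n$, and then uses quasiconcavity to conclude $u_{n}(\tau)\le\sup_{\theta\in\mathbb{S}_{\epsilon}}u_{n}(\theta)\le u_{n}(\theta_{0})-\alpha_{\epsilon}$ for every $\tau$ outside the ball; this yields an explicit quantitative gap in the limit. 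You instead argue by contradiction: extract near-maximising witnesses $\theta_{n_{k}}$ outside the ball, use quasiconcavity to project them onto a \emph{small} sphere of radius $\epsilon'<\epsilon$ about $\theta_{0}$, and use compactness of $\bar{\mathbb{B}}_{\epsilon'}(\theta_{0})\cap\mathbb{T}$ to produce a limit point $\zeta^{\ast}\neq\theta_{0}$ with $u(\zeta^{\ast})\ge u(\theta_{0})$, contradicting uniqueness. Your version is non-quantitative, but it has a notable advantage in rigour: it isolates exactly where the hypothesis (interior point, or $\mathbb{T}$ the intersection of an open and a closed set) is used, namely to secure local compactness near $\theta_{0}$ so that $\zeta^{\ast}$ stays in $\mathbb{T}$. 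The paper's direct argument implicitly requires compactness of $\mathbb{S}_{\epsilon}$ itself---both for the positivity of the gap $\delta_{\epsilon}$ (attainment of the supremum) and for the interchange of $\limsup$ and $\sup$ over $\mathbb{S}_{\epsilon}$, which the paper justifies by an inequality that is not valid in general but does follow from uniform convergence on a compact set---and the $\epsilon$-sphere intersected with $\mathbb{T}$ need not be compact when it is not compactly contained in $\mathbb{T}$. By retreating to a sufficiently small radius $\epsilon'$ chosen after invoking the hypothesis, your argument sidesteps this delicacy entirely.
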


\begin{proof}
(i) We first show that $u:\mathbb{T}\to\mathbb{R}$ is quasiconcave.
Fix $\theta,\tau\in\mathbb{T}$ and $\lambda\in[0,1]$, and let $\xi=\lambda\theta+(1-\lambda)\tau$.
The set $\{\theta,\tau,\xi\}$ is compact, so (\ref{eq:-uniform-convergence-quasiconcave})
gives $u_{n}\to u$ uniformly on it, hence pointwise at each of the
three points. Since $u_{n}$ is quasiconcave, $-u_{n}$ is quasiconvex
and 
\[
-u_{n}(\xi)\le\max\{-u_{n}(\theta),-u_{n}(\tau)\}\text{.}
\]
Letting $n\to\infty$ and using the continuity of $(a,b)\mapsto\max\{a,b\}$
yields 
\[
-u(\xi)\le\max\{-u(\theta),-u(\tau)\}\text{,}
\]
so $u$ is quasiconcave. Continuity of $u$ follows because $u_{n}\to u$
uniformly on compact sets and each $u_{n}$ is continuous.

(ii) Fix $\epsilon>0$ and set 
\[
\mathbb{S}_{\epsilon}=\{\theta\in\mathbb{T}:\Vert\theta-\theta_{0}\Vert=\epsilon\}\text{,}
\]
and define 
\[
\delta_{\epsilon}=u(\theta_{0})-\sup_{\theta\in\mathbb{S}_{\epsilon}}u(\theta)\text{.}
\]
By the strict inequality $u(\theta)<u(\theta_{0})$ for every $\theta\neq\theta_{0}$,
we have $\delta_{\epsilon}>0$. Pointwise convergence at each $\theta\in\mathbb{T}$
gives $u_{n}(\theta_{0})\to u(\theta_{0})$, and the general bound
\[
\limsup_{n\to\infty}\sup_{\theta\in\mathbb{S}_{\epsilon}}u_{n}(\theta)\le\sup_{\theta\in\mathbb{S}_{\epsilon}}\limsup_{n\to\infty}u_{n}(\theta)=\sup_{\theta\in\mathbb{S}_{\epsilon}}u(\theta)
\]
implies that there exists $N$ such that, for all $n\ge N$, 
\begin{equation}
u_{n}(\theta_{0})-\sup_{\theta\in\mathbb{S}_{\epsilon}}u_{n}(\theta)\ge \delta_{\epsilon}/2=\alpha_{\epsilon}>0\text{.}\label{eq:alpha-gap}
\end{equation}

Now fix any $\tau\in\mathbb{T}\backslash\mathbb{B}_{\epsilon}(\theta_{0})$,
and let $\lambda_{1}=\epsilon/\Vert\tau-\theta_{0}\Vert\in(0,1]$
and $\theta_{1}=\theta_{0}+\lambda_{1}(\tau-\theta_{0})\in\mathbb{S}_{\epsilon}$.
By quasiconcavity of $u_{n}$, for all $n$, 
\[
u_{n}(\theta_{1})\ge\min\{u_{n}(\theta_{0}),u_{n}(\tau)\}\text{.}
\]
For $n\ge N$, (\ref{eq:alpha-gap}) gives $u_{n}(\theta_{0})>u_{n}(\theta_{1})$,
hence the minimum on the right cannot exceed $u_{n}(\theta_{1})$
unless $u_{n}(\tau)\le u_{n}(\theta_{1})$. Therefore, 
\[
u_{n}(\tau)\le u_{n}(\theta_{1})\le\sup_{\theta\in\mathbb{S}_{\epsilon}}u_{n}(\theta)\le u_{n}(\theta_{0})-\alpha_{\epsilon}\text{,}\qquad\text{for }n\ge N\text{.}
\]
Taking the supremum over $\tau\in\mathbb{T}\backslash\mathbb{B}_{\epsilon}(\theta_{0})$
and then the $\limsup$ as $n\to\infty$ yields 
\[
\limsup_{n\to\infty}\sup_{\theta\in\mathbb{T}\backslash\mathbb{B}_{\epsilon}(\theta_{0})}u_{n}(\theta)\le\limsup_{n\to\infty}\bigl(u_{n}(\theta_{0})-\alpha_{\epsilon}\bigr)=u(\theta_{0})-\alpha_{\epsilon}<u(\theta_{0})\text{,}
\]
as required. 
\end{proof}

When $\mathbb{T}$ is convex (as assumed in this subsection), Lemma \ref{lem:-quasiconcave} implies that B4 can be replaced with the following quasiconcavity assumption, since we can choose $\mathbb{S}=\overline{\mathbb{B}}_{\epsilon}(\theta_{0})$ for some $\epsilon>0$ with $\overline{\mathbb{B}}_{\epsilon}(\theta_{0})\subsetneq\mathbb{T}$.

\begin{description}
\item[{B4a}] For $\mathrm{P}$-a.s. $\omega\in\Omega$, and for each $n\in\mathbb{N}$, $u_{n}\left(\omega,\cdot\right):\mathbb{T}\to\mathbb{R}$ is continuous and quasiconcave.
\end{description}

\subsection{Assumptions A2a and A2b}

Next, we wish to verify that A2a and A2b together imply A2. To this
end, we require the following uniform strong law of large numbers,
which is standard and can be found, for example, in \citet[Thm. 9.60]{Shapiro2021}.
Make the following assumptions:
\begin{description}
\item[{C1}] $\mathbb{K}\subset\mathbb{T}$ is compact.
\item[{C2}] $u_{n}\left(\omega,\theta\right)=n^{-1}\sum_{i=1}^{n}v\left(X_{i}(\omega);\theta\right)$,
where $v$ is Carath\'{e}odory in the sense that $v\left(\cdot;\theta\right)$
is $\left(\mathbb{X},\mathfrak{B}_{\mathbb{X}}\right)$-measurable
for each $\theta\in\mathbb{T}$ and $v\left(x;\cdot\right)$ is continuous
for each $x\in\mathbb{X}$.
\item[{C3}] There exists a dominating function $\Delta:\mathbb{X}\rightarrow\mathbb{R}$
such that $\left|v\left(x;\theta\right)\right|\le\Delta\left(x\right)$
for each $\theta\in\mathbb{T}$, and $\mathrm{E}\left[\Delta\left(X\right)\right]<\infty$.
\end{description}
\begin{thm}
\label{thm:-ulln}Assume that $\left(X_{i}\right)_{i\in\mathbb{N}}$
are IID. If C1--C3 hold, then
\[
\sup_{\theta\in\mathbb{K}}\left|u_{n}\left(\omega,\theta\right)-u\left(\theta\right)\right|\stackrel[n\rightarrow\infty]{\mathrm{P}\text{-a.s.}}{\longrightarrow}0\text{,}
\]
where $u\left(\theta\right)=\mathrm{E}\left[v\left(X;\theta\right)\right]$.
\end{thm}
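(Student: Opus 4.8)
The plan is to upgrade the ordinary strong law of large numbers from pointwise to uniform convergence over the compact set $\mathbb{K}$ by a standard finite-covering argument built around oscillation control of the integrand $v$. First I would record two preliminary facts about the limit $u(\theta)=\mathrm{E}[v(X;\theta)]$. Finiteness is immediate from C3, since $|u(\theta)|\le\mathrm{E}[\Delta(X)]<\infty$ for every $\theta\in\mathbb{T}$. Continuity of $u$ on $\mathbb{K}$ follows from the Carath\'eodory continuity in C2 together with C3: for any $\theta_{k}\to\theta$ we have $v(X;\theta_{k})\to v(X;\theta)$ pointwise with the uniform envelope $|v(X;\theta_{k})|\le\Delta(X)$, so the dominated convergence theorem gives $u(\theta_{k})\to u(\theta)$.

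Next I would control the local oscillation of $v$. For $\theta\in\mathbb{K}$ and $\rho>0$, define
\[
m(x;\theta,\rho)=\sup_{\tau\in\bar{\mathbb{B}}_{\rho}(\theta)\cap\mathbb{K}}v(x;\tau),\qquad\ell(x;\theta,\rho)=\inf_{\tau\in\bar{\mathbb{B}}_{\rho}(\theta)\cap\mathbb{K}}v(x;\tau).
\]
Because $v(x;\cdot)$ is continuous and $\bar{\mathbb{B}}_{\rho}(\theta)\cap\mathbb{K}$ is a separable metric space, these suprema and infima may be taken over a countable dense subset, which secures their measurability in $x$; both are dominated by $\Delta$. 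By continuity of $v(x;\cdot)$, $m(x;\theta,\rho)\downarrow v(x;\theta)$ and $\ell(x;\theta,\rho)\uparrow v(x;\theta)$ as $\rho\downarrow0$, so a further application of dominated convergence yields $\mathrm{E}[m(X;\theta,\rho)]\downarrow u(\theta)$ and $\mathrm{E}[\ell(X;\theta,\rho)]\uparrow u(\theta)$. Fixing $\epsilon>0$, I would then choose, for each $\theta$, a radius $\rho_{\theta}>0$ small enough that simultaneously $\mathrm{E}[m(X;\theta,\rho_{\theta})]<u(\theta)+\epsilon$, $\mathrm{E}[\ell(X;\theta,\rho_{\theta})]>u(\theta)-\epsilon$, and---using continuity of $u$---$|u(\tau)-u(\theta)|<\epsilon$ for all $\tau\in\bar{\mathbb{B}}_{\rho_{\theta}}(\theta)\cap\mathbb{K}$.

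The open cover $\{\mathbb{B}_{\rho_{\theta}}(\theta)\}_{\theta\in\mathbb{K}}$ of $\mathbb{K}$ admits, by the compactness hypothesis C1, a finite subcover indexed by $\theta_{1},\dots,\theta_{J}$. For each $j$ the sequence $(m(X_{i};\theta_{j},\rho_{\theta_{j}}))_{i}$ is IID and integrable, so the ordinary strong law gives $n^{-1}\sum_{i=1}^{n}m(X_{i};\theta_{j},\rho_{\theta_{j}})\to\mathrm{E}[m(X;\theta_{j},\rho_{\theta_{j}})]$ almost surely, and likewise for $\ell$; since there are only finitely many indices, all $2J$ convergences hold on a single $\mathrm{P}$-a.s. event. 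On this event, any $\theta\in\mathbb{K}$ lies in some $\mathbb{B}_{\rho_{\theta_{j}}}(\theta_{j})$, whence $\ell(X_{i};\theta_{j},\rho_{\theta_{j}})\le v(X_{i};\theta)\le m(X_{i};\theta_{j},\rho_{\theta_{j}})$ for every $i$, giving
\[
\limsup_{n\to\infty}u_{n}(\omega,\theta)\le\mathrm{E}[m(X;\theta_{j},\rho_{\theta_{j}})]<u(\theta_{j})+\epsilon<u(\theta)+2\epsilon,
\]
and symmetrically $\liminf_{n}u_{n}(\omega,\theta)>u(\theta)-2\epsilon$. Hence $\limsup_{n}\sup_{\theta\in\mathbb{K}}|u_{n}(\omega,\theta)-u(\theta)|\le2\epsilon$ on this event.

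Finally, to remove the dependence of the exceptional null set on $\epsilon$, I would repeat the construction with $\epsilon=1/k$ for each $k\in\mathbb{N}$, obtaining $\mathrm{P}$-a.s. events $\Omega_{k}$, and work on the countable intersection $\bigcap_{k}\Omega_{k}$, which remains $\mathrm{P}$-a.s.; on it the limsup of the uniform error is at most $2/k$ for every $k$, hence equals $0$. The supremum $\sup_{\theta\in\mathbb{K}}|u_{n}(\omega,\theta)-u(\theta)|$ is itself measurable because the integrands are continuous in $\theta$ and $\mathbb{K}$ is separable, so the statement is well posed. The one step demanding genuine care is precisely this uniform handling of the exceptional set: the pointwise strong law supplies, a priori, a $\theta$-dependent null set, and it is the reduction to finitely many points via compactness---together with the sandwiching of $u_{n}(\cdot,\theta)$ between the oscillation averages---that collapses these into a single null set and thereby delivers uniformity.
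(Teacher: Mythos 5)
Your proof is correct, but the comparison here is structural: the paper does not prove Theorem~\ref{thm:-ulln} at all. It invokes the statement as a standard uniform strong law of large numbers and simply cites \citet[Thm.~9.60]{Shapiro2021}. What you have written is a correct, self-contained proof by the classical covering-and-bracketing argument (essentially Wald/Jennrich): dominated convergence gives finiteness and continuity of $u$ from C2 and C3; the envelopes $m(x;\theta,\rho)$ and $\ell(x;\theta,\rho)$ are measurable via separability, dominated by $\Delta$, and converge monotonically to $v(x;\theta)$ as $\rho\downarrow0$, so their expectations can be driven within $\epsilon$ of $u(\theta)$; compactness (C1) reduces matters to finitely many centres, where the ordinary strong law applies to the envelope averages; and the sandwich plus the diagonal intersection over $\epsilon=1/k$ collapses the $\theta$-dependent null sets into a single one, which is exactly the delicate point you correctly flag. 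This is in fact essentially how the cited textbook result is proved, so your argument supplies the proof the paper omits rather than a genuinely different theorem. Two minor points worth tightening, both immediate: the limit $\rho\downarrow0$ should be taken along a sequence (say $\rho=1/j$) so that the convergence theorems apply verbatim, and the final uniform bound is cleanest stated as a maximum over the finitely many cover indices $j$, each of which controls all $\theta$ in its ball simultaneously. The paper's citation buys brevity; your proof buys self-containedness and makes visible exactly where C1, C2, and C3 enter.
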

Observe that, for any compact set $\mathbb{K}\subset\mathbb{T}$, A1,
A2a and A2b together allow us to apply Theorem \ref{thm:-ulln} to conclude that \eqref{eq:-uniform-convergence-loglike}
holds $\mathrm{P}$-a.s. on $\mathbb{K}$. Since $\mathbb{T}\subset\mathbb{R}^{p}$ is either open or closed is the intersection of an open and closed set, it is a locally compact separable metric space and is hence hemicompact and we can construct a  sequence of nested compact
sets $\left(\mathbb{K}_{i}\right)_{i\in\mathbb{N}}$ such that $\mathbb{T}=\bigcup_{i\in\mathbb{N}}\mathbb{K}_{i}$.
Then, since every compact $\mathbb{K}\subset\mathbb{T}$ is contained in some $\mathbb{K}_{i}$,
A2 is implied by the intersection of the events that \eqref{eq:-uniform-convergence-loglike} holds when $\mathbb{K}=\mathbb{K}_{i}$, for each $i\in\mathbb{N}$,
which each hold $\mathrm{P}$-a.s. and thus jointly hold on a $\mathrm{P}$-a.s.
set, as required.

\subsection{Convergence of the BPIC and WBIC} \label{sec:-bpic-wbic-proofs}

Firstly, we wish to demonstrate that Proposition \ref{prop:-consistency-of-beta-posterior} is a direct consequence
of Theorem \ref{thm:-feinberg}. To see that \eqref{eq:-bic-convergence} is true, we make the substitutions
$f_{n}\left(\omega,\cdot\right)=n^{-1}\sum_{i=1}^{n}\log\text{p}\left(X_{i}\left(\omega\right)\mid\cdot\right)$,
for each fixed $\omega$, and $\text{Q}_{n}=\Pi_{n}$. Under A1 and
A2, we have that $f_{n}\stackrel[n\rightarrow\infty]{\mathrm{P}\text{-a.s.}}{\longrightarrow}\mathrm{E}\left[\log\text{p}\left(X\mid\cdot\right)\right]$
uniformly on compact sets. Then, A5 and A6 together with A1 and
A2 imply that the conditions for Theorem \ref{thm:-feinberg} are met, and we therefore
have
\begin{align}
\lim_{n\rightarrow\infty}\int_{\mathbb{T}}\frac{1}{n}\sum_{i=1}^{n}\log\text{p}\left(X_{i}\left(\omega\right)\mid\cdot\right)\Pi_{n}\left(\omega,\text{d}\theta\right)=\int_{\mathbb{T}}\mathrm{E}\left[\log\text{p}\left(X\mid\theta\right)\right]\Pi_{0}\left(\text{d}\theta\right)\text{,} \label{eq:-int-mean-log-like-convergence}
\end{align}
for $\mathrm{P}$-a.s. $\omega\in\Omega$. Thus, given expression \eqref{eq:-bpic}, we
have
\begin{align*}
\lim_{n\rightarrow\infty}\text{BPIC}_{n} & =-2\lim_{n\rightarrow\infty}\int_{\mathbb{T}}\frac{1}{n}\sum_{i=1}^{n}\log\text{p}\left(X_{i}\left(\omega\right)\mid\cdot\right)\Pi_{n}\left(\omega,\text{d}\theta\right)+2\lim_{n\rightarrow\infty}\frac{p}{n}\\
 & =-2\int_{\mathbb{T}}\mathrm{E}\left[\log\text{p}\left(X\mid\theta\right)\right]\Pi_{0}\left(\text{d}\theta\right)\text{,}
\end{align*}
for $\mathrm{P}$-a.s. $\omega\in\Omega$, as required. Result \eqref{eq:-wbic-convergence} follows
similarly, using the definition of the WBIC and taking $\text{Q}_{n}=\Pi_{n}^{\beta_{n}}$,
with $n\beta_{n}\to\infty$.

Corollary \ref{cor:-pbic-wbic-delta-convergence} then follows by the fact that A1--A5 imply that the conditions
for Proposition \ref{prop:-consistency-of-beta-posterior} and Theorem \ref{thm:-feinberg} are both satisfied, with \eqref{eq:-int-mean-log-like-convergence} satisfied,
and $\Pi_{n}^{\beta_{n}}$ converging $\mathrm{P}$-a.s.w. to $\Pi_{0}=\delta_{\theta_{0}}$.
The result is obtained by observing that 
\begin{align}
\int_{\mathbb{T}}\mathrm{E}\left[\log\text{p}\left(X\mid\theta\right)\right]\delta_{\theta_{0}}\left(\text{d}\theta\right)=\mathrm{E}\left[\log\text{p}\left(X\mid\theta_{0}\right)\right]\text{,} \label{eq:-delta-measure-limit-log-like}
\end{align}
as per \eqref{eq:-delta-measures}.

\subsection{Convergence of the DIC}

The following result is required to establish the convergence of the posterior mean, i.e. \eqref{eq:-mean-convergence}.

\begin{prop} \label{prop:-mean-convergence}
Let $\left(\mathrm{Q}_{n}\right)_{n\in\mathbb{N}}$ converge to $\mathrm{Q}$,
$\mathrm{P}$-a.s.w., and suppose that $\left\Vert \cdot\right\Vert $
is AUI with respect to $\left(\mathrm{Q}_{n}\right)_{n\in\mathbb{N}}$,
$\mathrm{P}$-a.s., for any choice of norm $\left\Vert \cdot\right\Vert $
on $\mathbb{R}^{p}$. Then, $\bar{\vartheta}_{n}:\Omega\rightarrow\mathbb{R}^{p}$
converges to $\bar{\vartheta}_{0}\in\mathbb{R}^{p}$, $\mathrm{P}$-a.s.,
where
\[
\bar{\vartheta}_{n}\left(\omega\right)=\left(\int_{\mathbb{T}}\theta_{1}\,\mathrm{Q}_{n}\left(\omega,\mathrm{d}\theta\right),\dots,\int_{\mathbb{T}}\theta_{j}\,\mathrm{Q}_{n}\left(\omega,\mathrm{d}\theta\right),\dots,\int_{\mathbb{T}}\theta_{p}\,\mathrm{Q}_{n}\left(\omega,\mathrm{d}\theta\right)\right)\text{,}
\]
for each $n\in\mathbb{N}$, and
\[
\bar{\vartheta}_{0}=\left(\int_{\mathbb{T}}\theta_{1}\,\mathrm{Q}\left(\mathrm{d}\theta\right),\dots,\int_{\mathbb{T}}\theta_{j}\,\mathrm{Q}\left(\mathrm{d}\theta\right),\dots,\int_{\mathbb{T}}\theta_{p}\,\mathrm{Q}\left(\mathrm{d}\theta\right)\right)\text{.}
\]
\end{prop}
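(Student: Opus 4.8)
The plan is to handle each coordinate of the posterior mean separately via the general dominated convergence theorem, Theorem \ref{thm:-feinberg}, since the coordinate projections $\theta\mapsto\theta_j$ are continuous but unbounded and therefore lie outside the reach of weak convergence on its own. Fix an index $j\in\{1,\dots,p\}$ and take as integrand the deterministic, $n$-independent $j$-th coordinate projection $f_{n}(\omega,\theta)=f(\theta)=\theta_{j}$. Because each $f_{n}$ equals the single continuous map $\theta\mapsto\theta_{j}$, the continuous convergence of $\left(f_{n}\right)_{n\in\mathbb{N}}$ to $f$, $\mathrm{P}$-a.s., is immediate: for any $\theta_{n}\to\theta$ in $\mathbb{T}$ we have $(\theta_{n})_{j}\to\theta_{j}$ by continuity of the projection.

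The crux is to verify that $\left(f_{n}\right)_{n\in\mathbb{N}}=\left(\theta_{j}\right)_{n\in\mathbb{N}}$ is AUI with respect to $\left(\mathrm{Q}_{n}\right)_{n\in\mathbb{N}}$, $\mathrm{P}$-a.s. Here I would exploit the domination $\left|\theta_{j}\right|\le\left\Vert \theta\right\Vert$, which holds for the Euclidean norm (and, up to a constant, for any norm on $\mathbb{R}^{p}$). Consequently $\{\theta:\left|\theta_{j}\right|\ge\delta\}\subseteq\{\theta:\left\Vert \theta\right\Vert \ge\delta\}$, so for every $\delta>0$ and every $\omega$,
\[
\int_{\mathbb{T}}\left|\theta_{j}\right|\chi_{\{\tau:\,|\tau_{j}|\ge\delta\}}(\theta)\,\mathrm{Q}_{n}(\omega,\mathrm{d}\theta)\le\int_{\mathbb{T}}\left\Vert \theta\right\Vert \chi_{\{\tau:\,\|\tau\|\ge\delta\}}(\theta)\,\mathrm{Q}_{n}(\omega,\mathrm{d}\theta)\text{.}
\]
Taking $\limsup_{n\to\infty}$ and then $\lim_{\delta\to\infty}$ on both sides, the right-hand side vanishes on a $\mathrm{P}$-a.s. set by the assumed AUI property of $\left\Vert \cdot\right\Vert$, forcing the left-hand side to vanish as well; hence $\left(\theta_{j}\right)_{n\in\mathbb{N}}$ is AUI.

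With these ingredients in place, Theorem \ref{thm:-feinberg} applies and yields, for $\mathrm{P}$-a.s. $\omega$,
\[
\int_{\mathbb{T}}\theta_{j}\,\mathrm{Q}_{n}(\omega,\mathrm{d}\theta)\xrightarrow[n\to\infty]{}\int_{\mathbb{T}}\theta_{j}\,\mathrm{Q}(\mathrm{d}\theta)\text{,}
\]
the limit being finite because applying the same theorem to the continuous, AUI function $\left\Vert \cdot\right\Vert$ gives $\int_{\mathbb{T}}\left\Vert \theta\right\Vert \mathrm{Q}(\mathrm{d}\theta)<\infty$ together with $\left|\theta_{j}\right|\le\left\Vert \theta\right\Vert$. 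This is precisely the convergence of the $j$-th component of $\bar{\vartheta}_{n}$ to that of $\bar{\vartheta}_{0}$.

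Finally, since there are only finitely many coordinates, the intersection over $j\in\{1,\dots,p\}$ of the corresponding $\mathrm{P}$-a.s. events is again $\mathrm{P}$-a.s., and coordinatewise convergence in $\mathbb{R}^{p}$ is equivalent to convergence in norm, so $\bar{\vartheta}_{n}\to\bar{\vartheta}_{0}$, $\mathrm{P}$-a.s., as claimed. I expect the only genuine obstacle to be the unboundedness of the integrand, which is exactly what rules out a direct appeal to the weak convergence of $\left(\mathrm{Q}_{n}\right)_{n\in\mathbb{N}}$; everything else is routine once the AUI of $\left\Vert \cdot\right\Vert$ has been transferred to $\left|\theta_{j}\right|$ by domination and Theorem \ref{thm:-feinberg} is invoked.
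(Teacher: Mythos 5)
Your proof is correct and takes essentially the same route as the paper's: both reduce the claim to coordinatewise convergence, apply Theorem \ref{thm:-feinberg} to the projection $\theta\mapsto\theta_{j}$ (whose continuous convergence is trivial), and transfer the assumed AUI of the norm to $\left|\theta_{j}\right|$ via the domination $\left|\theta_{j}\right|\le\left\Vert \theta\right\Vert$, finishing with norm equivalence on $\mathbb{R}^{p}$. The only cosmetic difference is that the paper phrases the reduction through $\left\Vert \cdot\right\Vert _{1}$ while you use the Euclidean norm and coordinatewise convergence directly.
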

\begin{proof}[Proof of Proposition \ref{prop:-mean-convergence}]
Consider that
\[
\left\Vert \bar{\vartheta}_{n}-\bar{\vartheta}_{0}\right\Vert _{1}=\sum_{j=1}^{p}\left|\int_{\mathbb{T}}\theta_{j}\,\mathrm{Q}_{n}\left(\mathrm{d}\theta\right)-\int_{\mathbb{T}}\theta_{j}\,\mathrm{Q}\left(\mathrm{d}\theta\right)\right|\text{,}
\]
and so $\left\Vert \bar{\vartheta}_{n}-\bar{\vartheta}_{0}\right\Vert _{1}\stackrel[n\rightarrow\infty]{\mathrm{P}\text{-a.s.}}{\longrightarrow}0$
if, for each $j\in\left[p\right]$,
\[
\left|\int_{\mathbb{T}}\theta_{j}\,\mathrm{Q}_{n}\left(\mathrm{d}\theta\right)-\int_{\mathbb{T}}\theta_{j}\,\mathrm{Q}\left(\mathrm{d}\theta\right)\right|\stackrel[n\rightarrow\infty]{\mathrm{P}\text{-a.s.}}{\longrightarrow}0\text{.}
\]
By Theorem \ref{thm:-feinberg}, this holds if, for each $j$, the map $\theta_{j}\mapsto\left|\theta_{j}\right|$
is AUI with respect to $\left(\mathrm{Q}_{n}\right)_{n\in\mathbb{N}}$,
$\mathrm{P}$-a.s., which is true if $\left\Vert \cdot\right\Vert _{1}$ is AUI
with respect to $\left(\mathrm{Q}_{n}\right)_{n\in\mathbb{N}}$, $\mathrm{P}$-a.s.,
since $\left|\theta_{j}\right|\le\left\Vert \theta\right\Vert _{1}$. The result then follows from the equivalence of norms
on $\mathbb{R}^{p}$.
\end{proof}

By \eqref{eq:-dic}, the limit of $\text{DIC}_{n}$ is determined by the limits of the form \eqref{eq:-mean-convergence-log-like} and \eqref{eq:-int-mean-log-like-convergence}. Result \eqref{eq:-mean-convergence-log-like} follows from the same argument as that of Appendix~\ref{sec:-bpic-wbic-proofs}, under assumptions A1, A2, A5, and A6. By Proposition \ref{prop:-mean-convergence}, A6 and A7 imply result \eqref{eq:-mean-convergence}, where A1 and A2 then imply \eqref{eq:-mean-convergence-log-like} by the definition of continuous convergence, as previously discussed, which establishes the first result of Proposition \ref{prop:-dic-limit}. When A6 is replaced by A3 and A4, $\bar{\vartheta}_{0}=\theta_{0}$, which thus implies the second result of the proposition by the same argument as \eqref{eq:-delta-measures}.


\section{Technical results for examples} \label{sec: Appendix-B}

\subsection{Geometric model limits}

We verify the sufficient conditions of Corollary \ref{cor:-pbic-wbic-delta-convergence}
and Proposition \ref{prop:-dic-limit} for the geometric model from
Section \ref{sec:the-geometric-model}. Recall that the model has
PMF 
\[
\text{p}\!\left(x\mid\theta\right)=\left(1-\theta\right)^{x}\theta
\]
for $x\in\mathbb{X}=\mathbb{N}\cup\left\{ 0\right\}$ and $\theta\in\mathbb{T}=\left(0,1\right)$,
and we equip $\mathbb{T}$ with the uniform prior measure $\Pi$,
characterised by density $\pi\!\left(\theta\right)=1$ for each $\theta\in\mathbb{T}$.

\paragraph{Verification of A1--A4.}

Taking logarithms yields 
\[
\log\text{p}\!\left(X\mid\theta\right)=X\log\left(1-\theta\right)+\log\theta\text{,}
\]
which is continuous in $\theta$ for each $X\in\mathbb{X}$, and is
thus Carath\'{e}odory, verifying A1. For A2a, 
\[
\text{E}\!\left[\log\text{p}\!\left(X\mid\theta\right)\right]=\text{E}X\log\left(1-\theta\right)+\log\theta
\]
is finite for each fixed $\theta\in\left(0,1\right)$ whenever $\text{E}X<\infty$.
For A2b, since every compact subset of $\mathbb{T}$ is contained
in some $\left[a,1-a\right]$ with $a\in\left(0,1/2\right)$ and both
$\log(1-\theta)$ and $\log\theta$ are continuous on $\left[a,1-a\right]$,
the Weierstrass extreme value theorem implies 
\[
\left|\log\text{p}\!\left(X\mid\theta\right)\right|\le X\sup_{\theta\in\left[a,1-a\right]}\left|\log\!\left(1-\theta\right)\right|
+\sup_{\theta\in\left[a,1-a\right]}\left|\log\theta\right|=\Delta\!\left(X\right)\text{,}
\]
with $\text{E}\!\left[\Delta\!\left(X\right)\right]<\infty$ whenever
$\text{E}X<\infty$, so A2 holds. For A3, 
\begin{equation}
\text{E}\!\left[\log\text{p}\!\left(X\mid\theta\right)\right]=\text{E}X\log\left(1-\theta\right)+\log\theta\label{eq:-000020e-00005bloglike-00005d-000020geometric}
\end{equation}
is strictly concave and is maximised at $\theta_{0}=1/\left\{ 1+\text{E}X\right\}$.
The concavity of $\theta\mapsto\log\text{p}\!\left(x\mid\theta\right)$,
for each $x\in\mathbb{X}$, implies quasiconcavity, which verifies
A4. Since $\Pi\!\left(\left(\theta_{0}-\rho,\theta_{0}+\rho\right)\right)>0$
for each $\rho>0$ whenever $\text{E}X>0$, Proposition \ref{prop:-consistency-of-beta-posterior}
implies that the power posterior sequence $\left(\Pi_{n}^{\beta_{n}}\right)_{n\in\mathbb{N}}$
converges to $\delta_{\theta_{0}}$, $\text{P-a.s.w.}$, whenever
$n\beta_{n}\to\infty$ (this provides A6 with $\Pi_{0}=\delta_{\theta_{0}}$).

\paragraph{Verification of A5 (AUI).}

Define 
\[
f_{n}\!\left(\theta\right)=\bar{X}_{n}\log\left(1-\theta\right)+\log\theta\text{,}\qquad 
f\!\left(\theta\right)=\text{E}X\log\left(1-\theta\right)+\log\theta\text{,}
\]
where $\bar{X}_{n}=n^{-1}\sum_{i=1}^{n}X_{i}$. By A2, for every compact
$\mathbb{K}\subset(0,1)$, 
\begin{equation}
\sup_{\theta\in\mathbb{K}}\left|f_{n}\!\left(\theta\right)-f\!\left(\theta\right)\right|\xrightarrow[n\to\infty]{\text{P-a.s.}}0\text{,}\label{eq:-unif-on-K-geom}
\end{equation}
By A3 and the boundary behaviour $f(\theta)\to-\infty$ as $\theta\searrow0$
or $\theta\nearrow1$, there exist $0<a<\theta_{0}<b<1$ and $\varepsilon>0$
such that 
\begin{equation}
\sup_{\theta\in(0,a]\cup[b,1)}f(\theta)\le f(\theta_{0})-3\varepsilon\text{.}\label{eq:-separation-geom}
\end{equation}
Let $\mathbb{K}=[a,b]$. Using \eqref{eq:-unif-on-K-geom} on $\mathbb{K}$, and noting that both $f$ and $f_{n}$ are concave in $\theta$ (so their suprema on $\mathbb{K}^{\text{c}}=(0,a]\cup[b,1)$ are attained at the boundary points $a$ or $b$), there exists $N_{1}$ such that for all $n\ge N_{1}$, 
\begin{equation}
\sup_{\theta\in\mathbb{K}^{\text{c}}}f_{n}(\theta)\le f(\theta_{0})-2\varepsilon\text{,}\qquad
\inf_{\theta\in\mathbb{K}}f_{n}(\theta)\ge f(\theta_{0})-\varepsilon\text{.}\label{eq:-unif-sep-geom}
\end{equation}
Writing the normalising constant of $\Pi_{n}^{\beta_{n}}$ as 
\[
Z_{n}=\int_{0}^{1}\exp\!\left\{ n\beta_{n}f_{n}(\theta)\right\} \text{d}\theta\text{,}
\]
\eqref{eq:-unif-sep-geom} yields, for $n\ge N_{1}$, 
\begin{equation}
Z_{n}\ge\int_{\mathbb{K}}\exp\!\left\{ n\beta_{n}f_{n}(\theta)\right\} \text{d}\theta
\ge\text{Leb}\!\left(\mathbb{K}\right)\exp\!\left\{ n\beta_{n}\left(f(\theta_{0})-\varepsilon\right)\right\}\text{,}\label{eq:-Z-lb-geom}
\end{equation}
where $\text{Leb}$ denotes Lebesgue measure on the relevant space.
Consequently, for any measurable $\mathbb{A}\subset(0,1)$, 
\begin{equation}
\Pi_{n}^{\beta_{n}}(\mathbb{A})=\frac{\int_{\mathbb{A}}\exp\{n\beta_{n}f_{n}(\theta)\}\text{d}\theta}{Z_{n}}
\le\frac{\text{Leb}\!\left(\mathbb{A}\right)\exp\{n\beta_{n}\sup_{\theta\in\mathbb{A}}f_{n}(\theta)\}}
{\text{Leb}\!\left(\mathbb{K}\right)\exp\{n\beta_{n}(f(\theta_{0})-\varepsilon)\}}\text{.}\label{eq:-tail-template-geom}
\end{equation}
Applying \eqref{eq:-unif-sep-geom} in \eqref{eq:-tail-template-geom}
with $\mathbb{A}=\mathbb{K}^{\text{c}}$ gives the exponential tail
bound 
\[
\Pi_{n}^{\beta_{n}}\!\left(\mathbb{K}^{\text{c}}\right)\le 
C\exp\!\left\{ -n\beta_{n}\varepsilon\right\}\text{,}\qquad n\ge N_{1}\text{,}
\]
with the explicit constant 
$C=\text{Leb}\!\left(\mathbb{K}^{\text{c}}\right)/\text{Leb}\!\left(\mathbb{K}\right)$.

We now verify A5 (AUI) for the sequence of average log-likelihoods \(f_{n}\). Fix \(\delta>0\) and split
\[
\int_{\mathbb{T}}\left|f_{n}(\theta)\right|\mathbf{1}_{\{\left|f_{n}(\theta)\right|\ge\delta\}}\Pi_{n}^{\beta_{n}}(\text{d}\theta)
=\int_{\mathbb{K}}\left|f_{n}\right|\mathbf{1}_{\{\left|f_{n}\right|\ge\delta\}}\text{d}\Pi_{n}^{\beta_{n}}
+\int_{\mathbb{K}^{\text{c}}}\left|f_{n}\right|\mathbf{1}_{\{\left|f_{n}\right|\ge\delta\}}\text{d}\Pi_{n}^{\beta_{n}}\text{,}
\]
where \(\mathbb{K}=[a,b]\) is as above. By \eqref{eq:-unif-on-K-geom},
\[
\sup_{\theta\in\mathbb{K}}\left|f_{n}(\theta)\right|
\xrightarrow[n\to\infty]{\text{P-a.s.}}
\sup_{\theta\in\mathbb{K}}\left|f(\theta)\right|
<\infty\text{.}
\]
Hence, if \(\delta>\sup_{\theta\in\mathbb{K}}\left|f(\theta)\right|\), then there exists \(N_{2}\) (on the same \(\text{P}\)-a.s. event) such that for all \(n\ge N_{2}\),
\(\sup_{\theta\in\mathbb{K}}\left|f_{n}(\theta)\right|<\delta\); consequently the first integral equals \(0\) for all \(n\ge N_{2}\).

For the second integral, the separation in \eqref{eq:-unif-sep-geom} and the normalising constant bound \eqref{eq:-Z-lb-geom} imply that for \(n\ge N_{1}\),
\[
\pi_{n}^{\beta_{n}}(\theta)=\frac{\exp\{n\beta_{n}f_{n}(\theta)\}}{Z_{n}}
\le \frac{e^{-n\beta_{n}\varepsilon}}{\text{Leb}(\mathbb{K})}\text{,}\qquad \theta\in\mathbb{K}^{\text{c}},
\]
so that for any nonnegative \(g\),
\[
\int_{\mathbb{K}^{\text{c}}} g(\theta)\,\Pi_{n}^{\beta_{n}}(\text{d}\theta)\le \frac{e^{-n\beta_{n}\varepsilon}}{\text{Leb}(\mathbb{K})}\int_{\mathbb{K}^{\text{c}}} g(\theta)\,\text{d}\theta\text{.}
\]
Taking \(g(\theta)=\left|f_{n}(\theta)\right|\mathbf{1}_{\{\left|f_{n}(\theta)\right|\ge\delta\}}\) yields
\[
\int_{\mathbb{K}^{\text{c}}}\left|f_{n}\right|\mathbf{1}_{\{\left|f_{n}\right|\ge\delta\}}\text{d}\Pi_{n}^{\beta_{n}}
\le \frac{e^{-n\beta_{n}\varepsilon}}{\text{Leb}(\mathbb{K})}\int_{\mathbb{K}^{\text{c}}}\left|f_{n}(\theta)\right|\text{d}\theta\text{.}
\]
The integral on the right is finite (indeed, a.s. eventually uniformly bounded) because
\[
\left|f_{n}(\theta)\right|\le \left|\bar X_{n}\right|\left|\log(1-\theta)\right|+\left|\log\theta\right|
\]
and the boundary singularities are Lebesgue-integrable, i.e.,
\[
\int_{0}^{a}\left|\log\theta\right|\text{d}\theta=a\left(1-\log a\right)<\infty\text{,}\qquad
\int_{b}^{1}\left|\log(1-\theta)\right|\text{d}\theta=(1-b)\left(1-\log(1-b)\right)<\infty\text{.}
\]
Moreover, since \(\text{E}X<\infty\) implies \(\bar X_{n}\to \text{E}X\), \(\text{P-a.s.}\), there exists a deterministic constant \(M>0\) (e.g. \(M=\left|\text{E}X\right|+1\)) such that, \(\text{P-a.s.}\) and for all sufficiently large \(n\),
\[
\int_{\mathbb{K}^{\text{c}}}\left|f_{n}(\theta)\right|\text{d}\theta
\le
M\int_{\mathbb{K}^{\text{c}}}\left|\log(1-\theta)\right|\text{d}\theta
+\int_{\mathbb{K}^{\text{c}}}\left|\log\theta\right|\text{d}\theta
<\infty\text{.}
\]
Therefore, for all large \(n\),
\[
\int_{\mathbb{K}^{\text{c}}\!}\left|f_{n}\right|\mathbf{1}_{\{\left|f_{n}\right|\ge\delta\}}\mathrm{d}\Pi_{n}^{\beta_{n}}
\le \frac{e^{-n\beta_{n}\varepsilon}}{\mathrm{Leb}(\mathbb{K})}
   \int_{\mathbb{K}^{\text{c}}\!}\!\left|f_{n}(\theta)\right|\mathrm{d}\theta
\stackrel[\;n\to\infty\;]{\text{P-a.s.}}{\longrightarrow} 0\text{.}
\]
Combining the two parts, taking \(\limsup_{n\to\infty}\) and then letting \(\delta\to\infty\) establishes AUI:
\[
\lim_{\delta\to\infty}\limsup_{n\to\infty}\int_{\mathbb{T}}\left|f_{n}(\theta)\right|\mathbf{1}_{\{\left|f_{n}(\theta)\right|\ge\delta\}}\Pi_{n}^{\beta_{n}}(\text{d}\theta)=0\text{,}\qquad \text{P-a.s.}
\]

\paragraph{Limits for BPIC and WBIC.}

By A2 (uniform convergence on compacta and hemi-compactness of $\mathbb{T}$),
$\left(f_{n}\right)_{n\in\mathbb{N}}$ converges continuously to $f$,
$\text{P-a.s.}$; by the above, A5 holds; and by Proposition \ref{prop:-consistency-of-beta-posterior},
A6 holds with $\Pi_{0}=\delta_{\theta_{0}}$. Therefore Theorem \ref{thm:-feinberg}
applies and yields 
\[
\int_{\mathbb{T}}f_{n}(\theta)\,\Pi_{n}^{\beta_{n}}(\text{d}\theta)\longrightarrow\int_{\mathbb{T}}f(\theta)\,\delta_{\theta_{0}}(\text{d}\theta)=f(\theta_{0})=\text{E}\!\left[\log\text{p}(X\mid\theta_{0})\right]\text{,}\qquad\text{P-a.s.}
\]
Hence, as $n\to\infty$ with $n\beta_{n}\to\infty$, 
\[
\text{BPIC}_{n},\text{WBIC}_{n}\stackrel[n\to\infty]{\text{P-a.s.}}{\longrightarrow}-2\,\text{E}\!\left[\log\text{p}\!\left(X\mid\theta_{0}\right)\right]
=2\log\!\left\{ 1+\text{E}X\right\}-2\,\text{E}X\log\!\left\{ \frac{\text{E}X}{1+\text{E}X}\right\}\text{.}
\]

\subsection{Normal model limits}

We verify the sufficient conditions of Corollary \ref{cor:-pbic-wbic-delta-convergence}
and Proposition \ref{prop:-dic-limit} for the normal model in Section
\ref{sec:-normal-limit}. Recall that 
\[
\text{p}\left(x\mid\theta\right)=\left(2\pi\right)^{-p/2}\exp\left\{ -\frac{1}{2}\left\Vert x-\theta\right\Vert ^{2}\right\} \text{,}\qquad x\in\mathbb{X}=\mathbb{R}^{p},\ \theta\in\mathbb{T}=\mathbb{R}^{p}\text{,}
\]
and equip $\mathbb{T}$ with the normal prior $\Pi$ with PDF 
\[
\pi(\theta)=\left(2\pi\right)^{-p/2}\exp\left\{ -\frac{1}{2}\left\Vert \theta-\mu\right\Vert ^{2}\right\} \text{,}\qquad\mu\in\mathbb{R}^{p}\text{.}
\]
For $\beta_{n}>0$, the power posterior has density 
\begin{equation}
\pi_{n}^{\beta_{n}}(\theta)=\frac{\exp\{n\beta_{n}f_{n}(\theta)\}\,\pi(\theta)}{Z_{n}}\text{,}\qquad Z_{n}=\int_{\mathbb{R}^{p}}\exp\{n\beta_{n}f_{n}(\theta)\}\,\pi(\theta)\,\text{d}\theta\text{,}\label{eq:-post-normal-with-prior}
\end{equation}
where 
\[
f_{n}(\theta)=-\frac{p}{2}\log(2\pi)-\frac{1}{2n}\sum_{i=1}^{n}\left\Vert X_{i}-\theta\right\Vert ^{2}\text{.}
\]

\paragraph{Verification of A1--A4.}

Taking logarithms, 
\[
\log\text{p}(X\mid\theta)=-\frac{p}{2}\log(2\pi)-\frac{1}{2}\left\Vert X-\theta\right\Vert ^{2}=-\frac{p}{2}\log(2\pi)-\frac{1}{2}\left\{ \left\Vert X\right\Vert ^{2}-2X^{\top}\theta+\left\Vert \theta\right\Vert ^{2}\right\} \text{,}
\]
which is continuous in $\theta$ for each $X\in\mathbb{X}$ and jointly
measurable in $(X,\theta)$; hence it is Carath\'{e}odory, verifying A1.
For A2a, for each fixed $\theta$, 
\[
\text{E}\left[\log\text{p}(X\mid\theta)\right]=-\frac{p}{2}\log(2\pi)-\frac{1}{2}\left\{ \text{E}\left[\left\Vert X\right\Vert ^{2}\right]-2\text{E}X^{\top}\theta+\left\Vert \theta\right\Vert ^{2}\right\} 
\]
is finite whenever $\text{E}\left[\left\Vert X\right\Vert ^{2}\right]<\infty$.
For A2b, if $\mathbb{K}\subset\mathbb{T}$ is compact, then $\sup_{\theta\in\mathbb{K}}\left\Vert \theta\right\Vert =R<\infty$,
and 
\[
\bigl|\log\text{p}(X\mid\theta)\bigr|\le\frac{p}{2}\log(2\pi)+\frac{1}{2}\left\Vert X\right\Vert ^{2}+R\left\Vert X\right\Vert +\frac{1}{2}R^{2}=\Delta(X)\text{,}
\]
with $\text{E}\Delta(X)<\infty$ if $\text{E}\left\Vert X\right\Vert ^{2}<\infty$.
Hence A2 holds. For A3, writing $\theta_{0}=\text{E}X$ and completing
the square, 
\[
\text{E}\left[\log\text{p}(X\mid\theta)\right]=-\frac{p}{2}\log(2\pi)-\frac{1}{2}\left\{ \text{E}\left[\left\Vert X-\theta_{0}\right\Vert ^{2}\right]+\left\Vert \theta-\theta_{0}\right\Vert ^{2}\right\} \text{,}
\]
which is strictly concave in $\theta$ and uniquely maximised at $\theta_{0}$.
Concavity implies quasiconcavity, verifying A4 via B4a. Since the normal prior
assigns positive mass to all open balls, $\Pi\left(\mathbb{B}_{\rho}(\theta_{0})\right)>0$
for every $\rho>0$. By Proposition \ref{prop:-consistency-of-beta-posterior},
if $n\beta_{n}\to\infty$ then $\Pi_{n}^{\beta_{n}}\Rightarrow\delta_{\theta_{0}}$
$\text{P}$-a.s.w. (this verifies A6 with $\Pi_{0}=\delta_{\theta_{0}}$).
We note explicitly that this step uses only that $\pi$ is strictly
positive in a neighbourhood of $\theta_{0}$.

\paragraph{Verification of A5 (AUI).}

Define also 
\[
f(\theta)=-\frac{p}{2}\log(2\pi)-\frac{1}{2}\text{E}\left[\left\Vert X-\theta\right\Vert ^{2}\right]\text{.}
\]
By A2, for every compact $\mathbb{K}\subset\mathbb{T}$, 
\begin{equation}
\sup_{\theta\in\mathbb{K}}\left|f_{n}(\theta)-f(\theta)\right|\xrightarrow[n\to\infty]{\text{P-a.s.}}0\text{.}\label{eq:-unif-on-K-normal}
\end{equation}
Fix $r>0$ and set $\mathbb{K}=\bar{\mathbb{B}}_{r}(\theta_{0})$.
Since 
\[
f(\theta)=f(\theta_{0})-\frac{1}{2}\left\Vert \theta-\theta_{0}\right\Vert ^{2}\text{,}
\]
we have the separation 
\begin{equation}
\sup_{\theta\in\mathbb{K}^{\text{c}}}f(\theta)\le f(\theta_{0})-\frac{1}{2}r^{2}\text{,}\qquad\inf_{\theta\in\mathbb{K}}f(\theta)=f(\theta_{0})-\frac{1}{2}r^{2}\text{.}\label{eq:-sep-normal}
\end{equation}
Combining (\ref{eq:-unif-on-K-normal}) with (\ref{eq:-sep-normal}),
there exists $N_{1}$ (on a $\text{P}$-a.s. set) such that for all
$n\ge N_{1}$, 
\begin{equation}
\sup_{\theta\in\mathbb{K}^{\text{c}}}f_{n}(\theta)\le f(\theta_{0})-\frac{1}{4}r^{2}\text{,}\qquad\inf_{\theta\in\mathbb{K}}f_{n}(\theta)\ge f(\theta_{0})-\frac{3}{4}r^{2}\text{.}\label{eq:-unif-sep-normal}
\end{equation}
Let 
\[
Z_{n}=\int_{\mathbb{R}^{p}}\exp\left\{ n\beta_{n}f_{n}(\theta)\right\} \pi(\theta)\text{d}\theta\text{.}
\]
Using (\ref{eq:-unif-sep-normal}) and the fact that $\inf_{\theta\in\mathbb{K}}\pi(\theta)>0$,
we obtain, for $n\ge N_{1}$, 
\begin{equation}
Z_{n}\ge\inf_{\theta\in\mathbb{K}}\pi(\theta)\int_{\mathbb{K}}\exp\left\{ n\beta_{n}f_{n}(\theta)\right\} \text{d}\theta\ge\mathrm{Leb}(\mathbb{K})\inf_{\theta\in\mathbb{K}}\pi(\theta)\exp\left\{ n\beta_{n}\left(f(\theta_{0})-\frac{3}{4}r^{2}\right)\right\} \text{.}\label{eq:-Z-lb-normal-prior}
\end{equation}
To control the tails, for $\Delta>0$ define the annuli 
\[
\mathbb{A}_{m}=\left\{ \theta\in\mathbb{R}^{p}:r+m\Delta\le\left\Vert \theta-\theta_{0}\right\Vert <r+(m+1)\Delta\right\} \text{,}\qquad m\in\mathbb{N}\text{.}
\]
On $\mathbb{A}_{m}$, (\ref{eq:-unif-sep-normal}) yields 
\[
\sup_{\theta\in\mathbb{A}_{m}}f_{n}(\theta)\le f(\theta_{0})-\frac{1}{2}\bigl(r+m\Delta\bigr)^{2}+\frac{1}{4}r^{2}=f(\theta_{0})-\frac{1}{2}\bigl(m^{2}\Delta^{2}+2mr\Delta+\frac{1}{2}r^{2}\bigr)\text{.}
\]
Since $\sup_{\theta\in\mathbb{R}^{p}}\pi(\theta)=(2\pi)^{-p/2}$,
it follows from (\ref{eq:-post-normal-with-prior}) and (\ref{eq:-Z-lb-normal-prior})
that, for $n\ge N_{1}$, 
\begin{equation}
\Pi_{n}^{\beta_{n}}(\mathbb{A}_{m})=\frac{\int_{\mathbb{A}_{m}}\exp\{n\beta_{n}f_{n}(\theta)\}\pi(\theta)\,\text{d}\theta}{Z_{n}}\le\frac{(2\pi)^{-p/2}\mathrm{Leb}(\mathbb{A}_{m})}{\mathrm{Leb}(\mathbb{K})\inf_{\theta\in\mathbb{K}}\pi(\theta)}\exp\left\{ -\frac{n\beta_{n}}{2}\bigl(m^{2}\Delta^{2}+2mr\Delta+\frac{1}{2}r^{2}\bigr)\right\} .\label{eq:-annulus-tail-normal-prior}
\end{equation}
Using $\mathrm{Leb}(\mathbb{A}_{m})\le C'_{p}(r+m\Delta)^{p-1}\Delta$, where $C'_{p}$ is a dimension-dependent constant from the unit-ball volume formula,
we deduce 
\[
\Pi_{n}^{\beta_{n}}(\mathbb{A}_{m})\le\frac{C_{p}'}{\mathrm{Leb}(\mathbb{K})\inf_{\theta\in\mathbb{K}}\pi(\theta)}(r+m\Delta)^{p-1}\Delta\exp\left\{ -\frac{n\beta_{n}}{2}\bigl(m^{2}\Delta^{2}+2mr\Delta+\frac{1}{2}r^{2}\bigr)\right\} .
\]

We now show AUI for $\left(f_{n}\right)_{n\in\mathbb{N}}$ with respect
to $\left(\Pi_{n}^{\beta_{n}}\right)_{n\in\mathbb{N}}$. Fix $\delta>0$
and split 
\[
\int_{\mathbb{R}^{p}}\left|f_{n}(\theta)\right|\mathbf{1}_{\{\left|f_{n}(\theta)\right|\ge\delta\}}\Pi_{n}^{\beta_{n}}(\text{d}\theta)=\int_{\mathbb{K}}\left|f_{n}\right|\mathbf{1}_{\{\left|f_{n}\right|\ge\delta\}}\text{d}\Pi_{n}^{\beta_{n}}+\sum_{m=0}^{\infty}\int_{\mathbb{A}_{m}}\left|f_{n}\right|\mathbf{1}_{\{\left|f_{n}\right|\ge\delta\}}\text{d}\Pi_{n}^{\beta_{n}}\text{.}
\]
By (\ref{eq:-unif-on-K-normal}), $\underset{n\to\infty}{\overset{\mathrm{P\text{-}a.s.}}{\longrightarrow}}\sup_{\theta\in\mathbb{K}}\left|f(\theta)\right|<\infty$;
hence if $\delta>\sup_{\theta\in\mathbb{K}}\left|f(\theta)\right|$,
the first integral is $0$ for all large $n$ on the same $\mathrm{P}$-a.s.
event. For the tail sum, $f_{n}$ is a quadratic polynomial in $\theta$
with coefficients that are $\mathrm{P}$-a.s. eventually bounded (by
the strong laws for $\bar{X}_{n}$ and $n^{-1}\sum_{i=1}^{n}\left\Vert X_{i}\right\Vert ^{2}$).
Therefore there exists $C>0$ such that, for all large $n$ and all
$m\ge0$, 
\[
\sup_{\theta\in\mathbb{A}_{m}}\left|f_{n}(\theta)\right|\le C\bigl(1+m^{2}\Delta^{2}\bigr)\text{,}
\]
and hence, by (\ref{eq:-annulus-tail-normal-prior}), there is $C'>0$ such that
\[
\int_{\mathbb{A}_{m}}\left|f_{n}\right|\mathbf{1}_{\{\left|f_{n}\right|\ge\delta\}}\text{d}\Pi_{n}^{\beta_{n}}
\le\frac{C'\bigl(1+m^{2}\Delta^{2}\bigr)(r+m\Delta)^{p-1}\Delta}{\mathrm{Leb}(\mathbb{K})\inf_{\theta\in\mathbb{K}}\pi(\theta)}
\exp\left\{ -\frac{n\beta_{n}}{2}\bigl(m^{2}\Delta^{2}+2mr\Delta+\frac{1}{2}r^{2}\bigr)\right\}.
\]
Observe that $1+m^{2}\Delta^{2}\le\max\{1,\Delta^{2}\}(1+m^{2})$ and $m^{2}\Delta^{2}+2mr\Delta+(1/2)r^{2}\ge m^{2}\Delta^{2}$. Absorbing the fixed positive multiplicative factors depending only on $\Delta$, $r$, $\mathrm{Leb}(\mathbb{K})$, and $\inf_{\theta\in\mathbb{K}}\pi(\theta)$, we obtain
\[
\int_{\mathbb{A}_{m}}\left|f_{n}\right|\mathbf{1}_{\{\left|f_{n}\right|\ge\delta\}}\text{d}\Pi_{n}^{\beta_{n}}
\le C_{1}\,(1+m^{2})(r+m\Delta)^{p-1}\exp\left\{-c\,n\beta_{n}\,m^{2}\Delta^{2}\right\}\text{,}
\]
for some $c>0$ and $C_{1}>0$ independent of $m$ and $n$. Summing over $m$ and comparing with the integral of a polynomial times a Gaussian tail yields
\[
\sum_{m=0}^{\infty}(1+m^{2})(r+m\Delta)^{p-1}\exp\left\{-c\,n\beta_{n}\,m^{2}\Delta^{2}\right\}
\le\frac{C_{p,r,\Delta}}{(n\beta_{n})^{(p+2)/2}}\xrightarrow[n\to\infty]{}0\text{,}
\]
for constant $C_{p,r,\Delta}>0$. Therefore 
\[
\lim_{\delta\to\infty}\limsup_{n\to\infty}\int_{\mathbb{R}^{p}}\left|f_{n}(\theta)\right|\mathbf{1}_{\{\left|f_{n}(\theta)\right|\ge\delta\}}\Pi_{n}^{\beta_{n}}(\text{d}\theta)=0\text{,}\qquad\mathrm{P\text{-}a.s.,}
\]
which verifies the AUI condition (A5).

\paragraph{Limits for BPIC and WBIC.}

By A2 
$\left(f_{n}\right)_{n\in\mathbb{N}}$ converges continuously to $f$,
$\text{P-a.s.}$; by the above, A5 holds; and by Proposition \ref{prop:-consistency-of-beta-posterior},
A6 holds with $\Pi_{0}=\delta_{\theta_{0}}$. Therefore Theorem \ref{thm:-feinberg}
applies to the integrals with respect to $\Pi_{n}^{\beta_{n}}$ and
yields 
\[
\int_{\mathbb{T}}f_{n}(\theta)\,\Pi_{n}^{\beta_{n}}(\text{d}\theta)\xrightarrow[n\to\infty]{\text{P-a.s.}}f(\theta_{0})=-\frac{p}{2}\log(2\pi)-\frac{1}{2}\text{E}\left[\left\Vert X-\theta_{0}\right\Vert ^{2}\right]\text{.}
\]
Hence, as $n\to\infty$ with $n\beta_{n}\to\infty$, 
\[
\text{BPIC}_{n},\text{WBIC}_{n}\xrightarrow[n\to\infty]{\text{P-a.s.}}p\log(2\pi)+\text{E}\left[\left\Vert X\right\Vert ^{2}\right]-\left\Vert \text{E}X\right\Vert ^{2}\text{.}
\]

\paragraph{Limit for DIC.}

For the second term of (\ref{eq:-dic}), we verify A7 for $\left\Vert \cdot\right\Vert $
with respect to the standard posterior $\Pi_{n}\equiv\Pi_{n}^{\beta_{n}}$
with $\beta_{n}=1$, whose density is proportional to $\exp\{\sum_{i=1}^{n}\log\text{p}(X_{i}\mid\theta)\}\pi(\theta)$.
Repeating the annulus argument with $g(\theta)=\left\Vert \theta\right\Vert $, we obtain, for $n$
large, 
\[
\int_{\mathbb{R}^{p}}\left\Vert \theta\right\Vert \mathbf{1}_{\{\left\Vert \theta\right\Vert \ge\delta\}}\Pi_{n}(\text{d}\theta)\le\frac{C}{\inf_{\theta\in\mathbb{K}}\pi(\theta)}\sum_{m=0}^{\infty}(r+m\Delta)\,(r+m\Delta)^{p-1}\Delta e^{-cn(m^{2}\Delta^{2})}\xrightarrow[n\to\infty]{\text{P-a.s.}}0\text{,}
\]
establishing A7. Then Proposition \ref{prop:-mean-convergence} together
with A1, A2 and A6 gives $\bar{\theta}_{n}\xrightarrow{\text{P-a.s.}}\theta_{0}$
and hence 
\[
\frac{2}{n}\sum_{i=1}^{n}\log\text{p}(X_{i}\mid\bar{\theta}_{n})\xrightarrow[n\to\infty]{\text{P-a.s.}}2\text{E}\left[\log\text{p}(X\mid\theta_{0})\right]\text{;}
\]
cf. (\ref{eq:-mean-convergence-log-like}). Combining with the first
term yields 
\[
\text{DIC}_{n}\xrightarrow[n\to\infty]{\text{P-a.s.}}p\log(2\pi)+\text{E}\left[\left\Vert X\right\Vert ^{2}\right]-\left\Vert \text{E}X\right\Vert ^{2}\text{,}
\]
as claimed. 
We conclude with the following lemma for obtaining closed forms of the WBIC for the normal models.

\begin{lem}
\label{lem:pp-normal} Let $\mathrm{p}$ and $\pi$ be as given in
Section \ref{sec:-normal-limit}. For $\beta_{n}>0$, the power posterior 
\[
\Pi_{n}^{\beta_{n}}(\mathrm{d}\theta)\propto\biggl\{\prod_{i=1}^{n}\mathrm{p}(X_{i}\mid\theta)\biggr\}^{\beta_{n}}\Pi(\mathrm{d}\theta)\text{,}
\]
has law $\mathrm{N}(m_{n},v_{n}\mathbf{I})$, with 
\[
m_{n}=\frac{n\beta_{n}\,\bar{X}_{n}+\mu}{n\beta_{n}+1}\text{,}\qquad v_{n}=\frac{1}{n\beta_{n}+1}\text{.}
\]
\end{lem}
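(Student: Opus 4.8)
The plan is to prove the lemma by a direct conjugacy computation, completing the square in $\theta$. First I would write the unnormalised power-posterior density, discarding all multiplicative factors that do not depend on $\theta$ (namely $(2\pi)^{-np\beta_n/2}$ from the likelihood and $(2\pi)^{-p/2}$ from the prior), as
\[
\pi_n^{\beta_n}(\theta)\propto\left\{\prod_{i=1}^{n}\mathrm{p}(X_i\mid\theta)\right\}^{\beta_n}\pi(\theta)\propto\exp\left\{-\frac{\beta_n}{2}\sum_{i=1}^{n}\|X_i-\theta\|^2-\frac{1}{2}\|\theta-\mu\|^2\right\}\text{.}
\]
This reduces the problem to identifying the Gaussian kernel hidden in the exponent.

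Next I would expand the two quadratic forms, using $\sum_{i=1}^{n}\|X_i-\theta\|^2=\sum_{i=1}^{n}\|X_i\|^2-2n\,\bar{X}_n^{\top}\theta+n\|\theta\|^2$ together with $\|\theta-\mu\|^2=\|\theta\|^2-2\mu^{\top}\theta+\|\mu\|^2$, and then collect terms in $\theta$. The coefficient of $\|\theta\|^2$ in the exponent is $-\tfrac12(n\beta_n+1)$, the linear term is $(n\beta_n\bar{X}_n+\mu)^{\top}\theta$, and every remaining contribution is independent of $\theta$ and may be absorbed into the normalising constant. Then I would complete the square, rewriting the exponent up to $\theta$-free terms as
\[
-\frac{n\beta_n+1}{2}\left\|\theta-\frac{n\beta_n\bar{X}_n+\mu}{n\beta_n+1}\right\|^2\text{,}
\]
which is exactly the kernel of the $\mathrm{N}(m_n,v_n\mathbf{I})$ law with $m_n$ and $v_n$ as stated.

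To conclude I would observe that, since $n\beta_n+1>0$ for every $\beta_n>0$, the displayed quadratic form is positive definite, so the kernel is integrable and its normalising constant is determined uniquely; this identifies $\Pi_n^{\beta_n}$ as the claimed isotropic Gaussian. This argument is entirely routine, and I do not anticipate any genuine obstacle: the only point requiring (trivial) attention is confirming $n\beta_n+1>0$, which guarantees that the completed-square expression corresponds to a proper, non-degenerate Gaussian rather than an improper or non-integrable kernel. The multivariate case for general $p$ introduces no new difficulty beyond carrying the isotropic covariance $v_n\mathbf{I}$ throughout, since both the likelihood and the prior have identity covariance structure and hence never couple distinct coordinates of $\theta$.
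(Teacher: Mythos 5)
Your proposal is correct and follows essentially the same route as the paper's proof: both reduce the power-posterior kernel to an exponent quadratic in $\theta$ and complete the square to identify the $\mathrm{N}(m_n, v_n\mathbf{I})$ kernel, the only cosmetic difference being that the paper first applies the decomposition $\sum_{i=1}^{n}\lVert X_i-\theta\rVert^2 = n\lVert\theta-\bar{X}_n\rVert^2+\sum_{i=1}^{n}\lVert X_i-\bar{X}_n\rVert^2$ whereas you expand both quadratic forms directly. Your closing remark on positive definiteness (via $n\beta_n+1>0$) is a harmless addition the paper leaves implicit.
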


\begin{proof}
Write the prior density as 
\[
\pi(\theta)\propto\exp\left(-\frac{1}{2}\lVert\theta-\mu\rVert^{2}\right)\text{,}
\]
and the power likelihood as 
\[
\prod_{i=1}^{n}\mathrm{p}\left(X_{i}\mid\theta\right)^{\beta_{n}}\propto\exp\Bigl(-\frac{\beta_{n}}{2}\sum_{i=1}^{n}\lVert X_{i}-\theta\rVert^{2}\Bigr)\text{.}
\]
Hence the power posterior kernel is 
\[
\pi(\theta)\prod_{i=1}^{n}\mathrm{p}\left(X_{i}\mid\theta\right)^{\beta_{n}}\propto\exp\Bigl(-\frac{1}{2}\lVert\theta-\mu\rVert^{2}-\frac{\beta_{n}}{2}\sum_{i=1}^{n}\lVert X_{i}-\theta\rVert^{2}\Bigr)\text{.}
\]
Use the identity 
\[
\sum_{i=1}^{n}\lVert X_{i}-\theta\rVert^{2}=n\lVert\theta-\bar{X}_{n}\rVert^{2}+\sum_{i=1}^{n}\lVert X_{i}-\bar{X}_{n}\rVert^{2}\text{,}
\]
where the second term does not depend on $\theta$ and can be absorbed
into the normalising constant. Collecting the terms depending on $\theta$
gives 
\[
-\frac{1}{2}\Bigl[\lVert\theta-\mu\rVert^{2}+n\beta_{n}\lVert\theta-\bar{X}_{n}\rVert^{2}\Bigr]=-\frac{1}{2}\Bigl[(1+n\beta_{n})\lVert\theta\rVert^{2}-2(n\beta_{n}\bar{X}_{n}+\mu)^{\top}\theta\Bigr]+\mathrm{const.}
\]
Completing the square yields
\[
(1+n\beta_{n})\lVert\theta\rVert^{2}-2(n\beta_{n}\bar{X}_{n}+\mu)^{\top}\theta=(1+n\beta_{n})\lVert\theta-m_{n}\rVert^{2}-\frac{\lVert n\beta_{n}\bar{X}_{n}+\mu\rVert^{2}}{1+n\beta_{n}}\text{,}
\]
with 
\[
m_{n}=\frac{n\beta_{n}\bar{X}_{n}+\mu}{n\beta_{n}+1}\text{.}
\]
Therefore the kernel is proportional to $\exp\bigl(-2^{-1}(1+n\beta_{n})\lVert\theta-m_{n}\rVert^{2}\bigr)$,
which is the kernel of the $\mathrm{N}(m_{n},v_{n}\mathbf{I})$ density, with
$v_{n}=(n\beta_{n}+1)^{-1}$, as required. 
\end{proof}

\subsection{Laplace model limits} \label{sec:-laplace-model-limits}

We verify the sufficient conditions of Corollary \ref{cor:-pbic-wbic-delta-convergence}
and Proposition \ref{prop:-dic-limit} for the Laplace model. Recall
that 
\[
\text{p}\left(x\mid\theta\right)=\frac{1}{2\gamma}\exp\left\{ -\left|x-\mu\right|/\gamma\right\} \text{,}\qquad x\in\mathbb{X}=\mathbb{R}\text{,}\ \theta=(\mu,\gamma)\in\mathbb{T}=[-m,m]\times[s^{-1},s]\text{,}
\]
with $m>0$ and $s>1$. We equip $\mathbb{T}$ with a prior $\Pi$
whose density $\pi$ is strictly positive on $\mathbb{T}$. For $\beta_{n}>0$,
the power posterior has density 
\begin{equation}
\pi_{n}^{\beta_{n}}(\theta)=Z_{n}^{-1}\exp\{n\beta_{n}f_{n}(\theta)\}\,\pi(\theta)\text{,}\qquad Z_{n}=\int_{\mathbb{T}}\exp\{n\beta_{n}f_{n}(\theta)\}\pi(\theta)\text{d}\theta\text{,}\label{eq:-post-laplace-with-prior}
\end{equation}
where 
\[
f_{n}(\theta)=-\log(2\gamma)-\gamma^{-1}\,n^{-1}\sum_{i=1}^{n}\left|X_{i}-\mu\right|\text{.}
\]
Since $\pi$ is continuous and strictly positive on the compact set
$\mathbb{T}$, there exist constants $0<\underline{\pi}\le\overline{\pi}<\infty$
with $\underline{\pi}\le\pi(\theta)\le\overline{\pi}$ for all $\theta\in\mathbb{T}$.
Consequently, 
\[
\underline{\pi}\int_{\mathbb{T}}\exp\{n\beta_{n}f_{n}(\theta)\}\text{d}\theta\le Z_{n}\le\overline{\pi}\int_{\mathbb{T}}\exp\{n\beta_{n}f_{n}(\theta)\}\text{d}\theta\text{,}
\]
and, for any measurable $\mathbb{A}\subset\mathbb{T}$, 
\[
\frac{\underline{\pi}}{\overline{\pi}}\frac{\int_{\mathbb{A}}\exp\{n\beta_{n}f_{n}(\theta)\}\text{d}\theta}{\int_{\mathbb{T}}\exp\{n\beta_{n}f_{n}(\theta)\}\text{d}\theta}\le\Pi_{n}^{\beta_{n}}(\mathbb{A})\le\frac{\overline{\pi}}{\underline{\pi}}\frac{\int_{\mathbb{A}}\exp\{n\beta_{n}f_{n}(\theta)\}\text{d}\theta}{\int_{\mathbb{T}}\exp\{n\beta_{n}f_{n}(\theta)\}\text{d}\theta}\text{.}
\]
These bounds will be used implicitly. In particular, the presence
of the prior only affects fixed multiplicative constants in the estimates
below.

\paragraph{Verification of A1--A4.}

Taking logarithms, 
\[
\log\text{p}(X\mid\theta)=-\log(2\gamma)-\gamma^{-1}\left|X-\mu\right|\text{,}
\]
which is continuous in $\theta$ for each $X\in\mathbb{X}$ and jointly
measurable in $(X,\theta)$; hence it is Carath\'{e}odory, verifying A1.
For A2a, for each fixed $\theta$, 
\[
\text{E}\left[\log\text{p}(X\mid\theta)\right]=-\log(2\gamma)-\gamma^{-1}\text{E}\left|X-\mu\right|
\]
is finite whenever $\text{E}\left|X\right|<\infty$. For A2b, since
$\mathbb{T}$ is compact, 
\[
\bigl|\log\text{p}(X\mid\theta)\bigr|\le\sup_{\gamma\in[s^{-1},s]}\bigl|\log(2\gamma)\bigr|+s\,\left|X\right|+\sup_{(\mu,\gamma)\in\mathbb{T}}\left|\mu\right|/\gamma=\Delta(X)\text{,}
\]
with $\text{E}\Delta(X)<\infty$ if $\text{E}\left|X\right|<\infty$.
Hence A2 holds. For A3, set 
\[
f(\theta)=-\log(2\gamma)-\gamma^{-1}\,\text{E}\left|X-\mu\right|\text{.}
\]
If $X$ has a continuous distribution function, then for each fixed
$\gamma$ the map $\mu\mapsto\text{E}\left|X-\mu\right|$ is uniquely
minimized at $\mu_{0}=\text{Med}(X)$, and with $\mu=\mu_{0}$ fixed
the map $\gamma\mapsto-\log(2\gamma)-\gamma^{-1}\text{E}\left|X-\mu_{0}\right|$
is maximized at $\gamma_{0}=\text{E}\left|X-\mu_{0}\right|$ by Fermat's
condition. Assuming $\theta_{0}=(\mu_{0},\gamma_{0})\in\mathbb{T}$,
$f$ is uniquely maximized at $\theta_{0}$. Continuity on the compact
$\mathbb{T}$ then implies A4, directly. Since $\pi$ is strictly
positive on $\mathbb{T}$, in particular $\Pi(\mathbb{B}_{\rho}(\theta_{0}))>0$
for all $\rho>0$; by Proposition \ref{prop:-consistency-of-beta-posterior},
if $n\beta_{n}\to\infty$ then $\Pi_{n}^{\beta_{n}}\Rightarrow\delta_{\theta_{0}}$
$\text{P}$-a.s.w. (this verifies A6 with $\Pi_{0}=\delta_{\theta_{0}}$).

\paragraph{Verification of A5 (AUI).}

On the compact $\mathbb{T}$, 
\[
\sup_{\theta\in\mathbb{T}}\left|f_{n}(\theta)\right|\le\sup_{\gamma\in[s^{-1},s]}\left|\log(2\gamma)\right|+s\,n^{-1}\sum_{i=1}^{n}\left|X_{i}\right|+\sup_{(\mu,\gamma)\in\mathbb{T}}\left|\mu\right|/\gamma\text{.}
\]
By the strong law of large numbers and $\text{E}\left|X\right|<\infty$,
the right-hand side is $\text{P}$-a.s. eventually bounded by a deterministic
constant $M$. Hence, if $\delta>M$, then for all $n$ large enough
\[
\int_{\mathbb{T}}\left|f_{n}(\theta)\right|\mathbf{1}_{\{\left|f_{n}(\theta)\right|\ge\delta\}}\Pi_{n}^{\beta_{n}}(\text{d}\theta)=0\text{,}
\]
which establishes AUI.

\paragraph{Limits for BPIC and WBIC.}

By A2, $\left(f_{n}\right)_{n\in\mathbb{N}}$ converges continuously
to $f$, $\text{P-a.s.}$; by the above, A5 holds; and by Proposition
\ref{prop:-consistency-of-beta-posterior}, A6 holds with $\Pi_{0}=\delta_{\theta_{0}}$.
Therefore Theorem \ref{thm:-feinberg} applies and yields 
\[
\int_{\mathbb{T}}f_{n}(\theta)\,\Pi_{n}^{\beta_{n}}(\text{d}\theta)\xrightarrow[n\to\infty]{\text{P-a.s.}}f(\theta_{0})=-\log(2\gamma_{0})-\gamma_{0}^{-1}\text{E}\left|X-\mu_{0}\right|=-\log(2\gamma_{0})-1\text{.}
\]
Hence, as $n\to\infty$ with $n\beta_{n}\to\infty$, 
\[
\text{BPIC}_{n},\text{WBIC}_{n}\xrightarrow[n\to\infty]{\text{P-a.s.}}-2f(\theta_{0})=2\log(2\gamma_{0})+2\text{.}
\]

\paragraph{Limit for DIC.}

For $\beta_{n}=1$, A7 for $\left\Vert \cdot\right\Vert $ with respect
to $\left(\Pi_{n}\right)_{n\in\mathbb{N}}$ holds trivially since
$\mathbb{T}$ is compact. By Proposition \ref{prop:-mean-convergence},
together with A1, A2 and A6, we have $\bar{\theta}_{n}\xrightarrow[n\to\infty]{\text{P-a.s.}}\theta_{0}$
and therefore 
\[
\frac{2}{n}\sum_{i=1}^{n}\log\text{p}(X_{i}\mid\bar{\theta}_{n})\xrightarrow[n\to\infty]{\text{P-a.s.}}2\text{E}\left[\log\text{p}(X\mid\theta_{0})\right]\text{.}
\]
Combining this with the first term yields 
\[
\text{DIC}_{n}\xrightarrow[n\to\infty]{\text{P-a.s.}}-2\,f(\theta_{0})=2\log(2\gamma_{0})+2\text{.}
\]

\subsection{Approximation of \texorpdfstring{$\text{DIC}_{n}$}{DICn} for the geometric model}

Under the geometric model, we recall that the PMF of $X$ is 
\[
\text{p}\left(x\mid\theta\right)=\left(1-\theta\right)^{x}\theta\text{,}
\]
for $x\in\mathbb{X}=\mathbb{N}\cup\{0\}$ and $\theta\in\mathbb{T}=(0,1)$. We now endow $\theta$ with a prior $\Pi$ whose law is $\text{Beta}\left(\alpha,\beta\right)$, $\alpha,\beta>0$. The average log-likelihood has the form
\[
\frac{1}{n}\sum_{i=1}^{n}\log\text{p}\left(X_{i}\mid\theta\right)=\bar{X}_{n}\log\left(1-\theta\right)+\log\theta\text{.}
\]
To evaluate $\text{DIC}_{n}$, we must evaluate the two expressions
\[
\int_{0}^{1}\frac{1}{n}\sum_{i=1}^{n}\log\text{p}\left(X_{i}\mid\theta\right)\Pi_{n}\left(\text{d}\theta\right)\text{ and }\frac{1}{n}\sum_{i=1}^{n}\log\text{p}\left(X_{i}\mid\bar{\theta}_{n}\right)\text{,}
\]
where $\bar{\theta}_{n}$ is the expectation under $\Pi_{n}$. Conjugacy between the geometric model and the beta prior implies that
$\Pi_{n}$ has law $\text{Beta}\left(a_{n},b_{n}\right)$, where $a_{n}=n+\alpha$ and $b_{n}=n\bar{X}_{n}+\beta$. Furthermore, if $Y\sim\text{Beta}\left(a_{n},b_{n}\right)$, then
\[
\text{E}Y=\frac{a_{n}}{a_{n}+b_{n}}\text{,}\qquad \text{E}\log Y=\psi\left(a_{n}\right)-\psi\left(a_{n}+b_{n}\right)\text{,}\qquad \text{E}\log\left(1-Y\right)=\psi\left(b_{n}\right)-\psi\left(a_{n}+b_{n}\right)\text{.}
\]
Given these facts,
\begin{align*}
\int_{0}^{1}\frac{1}{n}\sum_{i=1}^{n}\log\text{p}\left(X_{i}\mid\theta\right)\Pi_{n}\left(\text{d}\theta\right)
&=\text{E}\left[\bar{X}_{n}\log\left(1-Y\right)+\log Y\,\big|\,\bar{X}_{n}\right]\\
&=\bar{X}_{n}\left\{\psi\left(b_{n}\right)-\psi\left(a_{n}+b_{n}\right)\right\}+\psi\left(a_{n}\right)-\psi\left(a_{n}+b_{n}\right)\text{.}
\end{align*}
We then apply the Poincar\'e-type approximation
\[
\psi\left(a\right)\approx\log a-\frac{1}{2a}
\]
to obtain
\[
\bar{X}_{n}\left\{\log\left(\frac{b_{n}}{a_{n}+b_{n}}\right)-\frac{a_{n}}{2b_{n}\left(a_{n}+b_{n}\right)}\right\}
+\left\{\log\left(\frac{a_{n}}{a_{n}+b_{n}}\right)-\frac{b_{n}}{2a_{n}\left(a_{n}+b_{n}\right)}\right\}\text{.}
\]
By substitution of $\bar{\theta}_{n}=\text{E}Y=a_{n}/(a_{n}+b_{n})$, we also have
\[
\frac{1}{n}\sum_{i=1}^{n}\log\text{p}\left(X_{i}\mid\bar{\theta}_{n}\right)
=\bar{X}_{n}\log\left(\frac{b_{n}}{a_{n}+b_{n}}\right)+\log\left(\frac{a_{n}}{a_{n}+b_{n}}\right)\text{.}
\]
The approximation for $\text{DIC}_{n}$ then follows by direct substitution into its definition.

\bibliographystyle{apalike2}
\bibliography{2023_MathsETC}

\end{document}